\newcommand{\R}{\mathbb{R}} %conjunto dos reais
\newcommand{\N}{\mathbb{N}} %conjunto dos naturais
\pgfplotsset{width=6cm,compat=newest}
\tikzset{->-/.style={decoration={
  markings,
  mark=at position #1 with {\arrow{stealth}}},postaction={decorate}}}%PARA O RETRATO DE FASE
\tikzset{-<-/.style={decoration={
  markings,
  mark=at position #1 with {\arrow{stealth reversed}}},postaction={decorate}}}%PARA O RETRATO DE FASE
\newtheorem{theorem}{Theorem}[section]
\newtheorem{corollary}[theorem]{Corollary}
\newtheorem{lemma}[theorem]{Lemma}
\theoremstyle{definition}
\newtheorem{definition}[theorem]{Definition}
\newtheorem{remark}[theorem]{Remark}
\numberwithin{equation}{section}
\newcommand{\teoref}[1]{Theorem \ref{#1}}
\newcommand{\lemaref}[1]{Lemma \ref{#1}}
\newcommand{\corref}[1]{Corollary \ref{#1}}
\newcommand{\obsref}[1]{Remark \ref{#1}}
\begin{document}

\title{Weak topological conjugacy via character of recurrence on impulsive dynamical systems}

\author{E. M. Bonotto\thanks{Instituto de Ci\^encias Matem\'aticas e
de Computa\c{c}\~ao, Universidade de S\~ao Paulo-Campus de S\~ao
Carlos, Caixa Postal 668, 13560-970 S\~ao Carlos SP, Brazil.
E-mail: ebonotto@icmc.usp.br. Partially supported by FAPESP 2016/24711-1 and CNPq 310497/2016-7.}\,
, D. P. Demuner \thanks{Universidade Federal do Esp\'irito Santo, Vit\'oria ES, Brazil. E-mail: ddemuner@gmail.com.} \ and G.
M. Souto\thanks{Universidade Federal do Esp\'irito Santo, Vit\'oria ES, Brazil.
E-mail: ginnaramsouto@gmail.com.} }

\date{}
\maketitle

\begin{abstract} In the present paper, we define the concept of weak topological conjugacy and we establish sufficient conditions to obtain this kind of topological conjugacy between two limit sets. We use the character of recurrence to obtain the results.
\end{abstract}

\section{Introduction}

\hspace{.5cm}
When dealing with continuous or discontinuous dynamical systems, it is natural to ask if two dynamical systems possess, in some sense, orbits with the same structure. These types of dynamical systems with similar appearance are called equivalent systems. There exist several notions of equivalence which depend on the required smoothness of the systems, such as the homomorphism. Two dynamical systems $(X,\pi)$ and $(Y,\sigma)$
are homomorphic if there is a continuous mapping $h:X\to Y$ such that the following diagram
\begin{figure}[h]
\begin{center}
\begin{tikzpicture}
  \matrix (m) [matrix of math nodes,row sep=3em,column sep=4em,minimum width=2em]
  {
     X\times \mathbb{R}_{+} & X \\
     Y\times \mathbb{R}_{+} & Y \\};
  \path[-stealth]
    (m-1-1) edge node [left] {$h\times I$} (m-2-1)
            edge node [below] {$\pi$} (m-1-2)
    (m-2-1.east|-m-2-2) edge node [below] {$\sigma$}
            node [above] {} (m-2-2)
    (m-1-2) edge node [right] {$h$} (m-2-2)
            ;
\end{tikzpicture}
\end{center}
\end{figure}\\
is commutative, that is, $h(\pi(x,t))=\sigma(h(x),t)$ for all $(x,t)\in X\times\mathbb{R}_{+}$ ($h$ maps orbits of $\pi$ to orbits of $\sigma$ homomorphically and preserving orientation of the orbits). If $h$ is a homeomorphism then $(X,\pi)$ and $(Y,\sigma)$ are called
topologically conjugate. Many results on equivalence were explored for continuous dynamical systems, see for instance \cite{Irwin, Katok, Smalle}. However, this concept has not been much investigated for impulsive systems.

The theory of impulsive dynamical systems is an important
tool to describe the evolution of systems where the continuous
development of a process is interrupted by abrupt changes of
state, whose duration is negligible in comparison with the duration of entire evolution processes. Many real world applications in biological phenomena, engineer,  physics,
optimal control model and frequency modulated systems are described by impulsive systems, see for example the references \cite{Ambrosino, Cortes, El, Liang, Riva, Yuan}.

One of the first works on equivalence for impulsive dynamical systems was carried out in \cite{BonottoFederson1}, where the authors deal with topological conjugate and give some results on the structure of the phase space. We present in this paper a study of topological conjugacy using character of recurrence in the sense of \cite{Cheban}.

We may observe that there is not a relation of topological conjugacy between  continuous dynamical systems and impulsive dynamical systems. For instance, consider the dynamical systems $(\mathbb{R}^{2},\pi)$
and 
$(\mathbb{R}^{2},\sigma)$ presented in Figure $1$, which are not homeomorphic.
\begin{figure}[htp]\label{Figura2}\begin{center}
\begin{tikzpicture}
\draw[->-=1] (-8,0) -- (-5,0); %eixo x
\draw[->-=1] (-8,0.5) -- (-5,0.5); %eixo x
\draw[->-=1] (-8,1) -- (-5,1); %eixo x
\draw[->-=1] (-8,-0.5) -- (-5,-0.5); %eixo x
\draw[->-=1] (-8,-1) -- (-5,-1); %eixo x
\draw(-6.5,-1.5)node[below]{$\pi((x,y),t)=(x+t,y)$};
\draw[->-=0.5,black] (1,1) -- (0,0); %ret
\draw[->-=0.5,black] (-1,1) -- (-0.1,0.1); %ret
\draw[dashed,black] (-0.1,0.1) -- (0,0); %ret
\draw[->-=0.5,black] (1,-1) -- (0,0); %ret
\draw[->-=0.5,black] (-1,-1) -- (0,0); %ret
\draw[->-=0.5,black] (1.4,0) -- (0,0); %ret
\draw[->-=0.5,black] (-1.4,0) -- (0,0); %ret
\draw[->-=0.5,black] (0,1.4) -- (0,0); %ret
\draw[->-=0.5,black] (0,-1.4) -- (0,0); %ret
\draw[color=black,fill] (0,0) circle (1pt);
\draw(0,-1.5)node[below]{$\sigma((x,y),t)=(xe^{-t},ye^{-t})$};
\draw[->,thick] (-4,0) -- (-2,0); %eixo x
\draw[thick] (-3.1,-0.1) -- (-2.9,0.1); %eixo x
\end{tikzpicture}\end{center}
 \caption{Systems are not homeomorphic.
 }\label{figexem1}
 \end{figure}
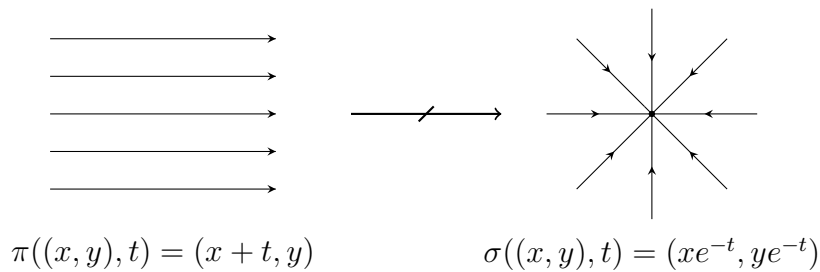
Moreover,  it is not possible to obtain a topological conjugation restricted on any invariant subset from the phase space.
However, is it possible to perturbed the systems $(\mathbb{R}^{2},\pi)$
and 
$(\mathbb{R}^{2},\sigma)$ under impulse conditions to obtain a topological conjugacy restricted on an invariant set as we will see in Subsection $4.1$.

In this work, we introduce the notion of weak topological conjugation and comparability of points on impulsive dynamical systems. We start in Section 2 by presenting
the basis of the theory of dynamical systems with impulses. In Section 3, we present some additional definitions and auxiliary results. Section 4 concerns with the main results. This section is divided in two
parts. In Subsection 4.1, we present the concept of a weak topological conjugation and the notion of comparable points via character of recurrence. 
Using the comparability by the character of recurrence, we show the existence of a continuous function that maps orbits between two limit sets homomorphically and preserving orientation, see Theorem \ref{T2.1} and Corollary \ref{CT2.1}. As a consequence, in Corollary \ref{CT2.2}, we establish sufficient conditions for the existence of a weak topological conjugation between two limit sets. In Subsection 4.2, we consider asymptotically almost periodic motions and we investigate some relations between comparable points and the existence of a weak topological conjugation, see Theorem \ref{T2.2}, Theorem \ref{T2.3} and Corollary \ref{C3.12}. In Theorem \ref{T3.15}, we present sufficient conditions for two points to be comparable in limit.

\section{Preliminares}

\hspace{0.5cm} Let $(X, \rho)$ be a metric space and
$\mathbb{R}_{+}$ be the set of all non-negative real
numbers. The triple $( X, \pi, \mathbb{R}_{+})$ is
called a \emph{semidynamical system} on $X$ if the mapping $\pi :  X
\times \mathbb{R}_+\to  X$ is continuous, $\pi (x,0)=x$ and $\pi
(\pi (x,t),s)= \pi (x, t+s)$ for all $x \in
 X$ and $t,s\in \mathbb {R}_+$.   

Along to this text, we shall denote the system $( X,\pi,
\mathbb{R}_{+})$ simply by $(X, \pi)$. For
every $x \in X$, we consider the continuous function $\pi_{x}:
\mathbb{R}_+ \to X$ given by $\pi_{x}(t)=\pi(x,t)$ which is called 
the \emph{motion} of $x$. The \emph{positive
orbit} of $x\in X$ is given by $\pi^{+} (x) = \{\pi (x,t) : t \in
\mathbb{R}_{+}\}$. For $t \geq 0$ and $x \in  X$, we
define $F(x,t)= \{y \in X: \pi (y,  t) = x \}$ and, for $\Delta
\subset [0,  +\infty)$ and $D \subset  X$, we write
$$
F(D,  \Delta) =  \{F(x,  t): x \in D
 \text{ and }  t \in \Delta \}.
$$
A point $x \in  X$ is called an \emph{initial point}, if
$F(x, t) = \emptyset$ for all $t>0$.

An
\emph{impulsive semidyna\-mical system} (ISS), represented by  $( X,  \pi; M, I)$,
consists of a semidynamical system $( X,  \pi)$, a nonempty closed
subset $M$ of $X$ such that for every $x \in  M$, there exists
$\epsilon_x >0$ such that
\[
F(x,  (0,  \epsilon_x))\cap  M = \emptyset \quad  \text{and}
\quad \pi (x,  (0, \epsilon_x))\cap  M = \emptyset,
\] and a continuous function $I :  M \to X$ whose
action we explain below in the description of an impulsive
trajectory.  The set $M$ is called the \emph{impulsive set} and
the function $I$ is called the \emph{impulse function}. We also
define
$$
M^{+}(x)=\Big(\bigcup_{t>0} \pi(x,t)\Big)\cap  M \quad \text{for all} \quad x \in X.
$$

If $x \in  X$ and $
M^{+}(x) \neq \emptyset$, then there is a number $s > 0$ such
that $\pi (x,  t) \notin  M$ for $0 < t < s$ and $\pi (x,
s) \in  M$. In this way, it is possible to define a
function $\phi :  X \to (0,+\infty]$ in the following manner
\begin{equation}\label{phi}
 \phi(x)=\begin{cases}
  s, & \text{if $\pi (x,  s) \in M$ and
  $\pi(x,  t) \notin M$ for $0 < t < s$,} \\
  +\infty, & \text{if } M^{+}(x) = \emptyset .
\end{cases}
\end{equation}
The number $\phi(x)$ represents the least positive time for
which the trajectory of $x\in X$ meets $M$ when $\phi (x) < +\infty$.

The \emph{impulsive trajectory} of $x$ in $(X,  \pi; M, I)$ is an
$ X-$valued function $\tilde{\pi } _{x}$ defined in some subset
$[0,s)$ of $\mathbb{R}_{+}$ $(s$ may be $+\infty)$. The
description of such trajectory follows inductively as described in
the next lines.

If $ M^{+} (x) = \emptyset$, then $\phi (x) = +\infty$ and $\tilde{\pi}_{x}(t) = \pi
(x,t)$ for all $t \in \mathbb{R}_{+}$.
However, if $ M^{+} (x) \neq \emptyset$ then $\phi (x) = s_{0} < +\infty$, $\pi (x,  s_{0}) = x_{1} \in  M$ and $\pi (x,  t) \notin
M$ for $0< t <s_{0}$. Thus we define $\tilde{\pi}_{x}$ on
$[0,s_{0}]$ by
\[
 \tilde{\pi}_{x} (t)=\begin{cases}
  \pi (x,  t), & 0 \leq t < s_{0}, \\
  {x^{+}_{1}}, & t = s_{0},
\end{cases}
\]
where $x^{+}_{1} = I(x_1)$. Let us
denote $x$ by $x_0^+$.

Since $s_{0} < + \infty$, the process now continues from
$x_{1}^{+}$ onwards. The reader may consult \cite{BonottoFederson1, Ciesielski2} for more details.

 Notice that $\tilde{\pi}_{x}$ is defined on
each interval $[t_{n}(x),  t_{n+1}(x)]$, where $t_0(x) = 0$ and $t_{n+1}(x) =
\displaystyle\sum _{i=0}^{n} s_{i}$, $n=0,1,2,\ldots$. If $ M^{+}(x^{+}_{n}) \neq \emptyset$ for all $
n=0,1,2,\dots$, then $\tilde{\pi}_{x}$ is defined
on the interval $[0,  T(x))$, where $T(x) = \displaystyle\sum
_{i=0}^{\infty} s_{i}$ $($recall that $x_0^+=x$ and $x_{n}^{+}= I(x_n) = I(\pi(x_{n-1}^{+},s_{n-1}))$ for $n=1,2,\ldots)$.

The \emph{impulsive positive orbit} of a point $x\in X$ in
$(X, \pi; M,  I)$ is defined by the set
\[
\tilde{\pi}^{+}(x)= \{\tilde{\pi}(x,  t): t \in [0,
T(x))\}.
\]

Analogously to the non-impulsive case, an ISS satisfies the following
 standard properties: $\tilde{\pi}(x,  0) = x$  and
 $\tilde{\pi}(\tilde{\pi}(x,  t),  s)
= \tilde{\pi}(x,  t+s)$, for all $x\in X$ and $ t, s \in [0,  T(x))$ such
that $t+s \in [0,  T(x))$. For more details about the theory of impulsive systems, the reader may consult
\cite{BonottoFederson1, artigo2, BonottoMatheus, Ciesielski2, Ciesielski3, Kaul}.

Now, let us discuss the continuity of the
function $\phi$ presented in \eqref{phi}. The continuity of $\phi$ is studied in
 \cite{Ciesielski2}.

Let $( X,\pi)$ be a semidynamical system. Any closed set $
S\subset X$ containing $x$ $(x \in  X)$ is called a \emph{section}
or a \emph{$\lambda $-section} through $x$, with $\lambda>0$, if
there exists a closed set $L\subset X$ such that
\begin{itemize}
\item[$a)$] $F(L,  \lambda) = S$;
\item[$b)$] $F(L,  [0,  2 \lambda])$ is a neighborhood of $x$;
\item[$c)$] $F(L,  \mu) \cap F(L,\nu) = \emptyset$, for
$0 \leq \mu < \nu \leq 2 \lambda$.
\end{itemize}
The set $F(L,[0,  2 \lambda])$ is called a \emph{tube} or a
\emph{$\lambda$-tube} and the set $L$ is called a \emph{bar}.

Any tube $F(L,[0,2 \lambda])$ given by a section $S$ through $x
\in X$ such that \linebreak$S \subset M \cap F(L,  [0,2 \lambda])$ is called
\emph{TC-tube} on $x$. We say that a point $x\in  M$ fulfills the
\emph{Tube Condition} and we write TC, if there exists a TC-tube
$F(L,  [0,  2 \lambda])$ through $x$. In particular, if $S = M
\cap F(L,  [0,  2 \lambda])$ we have a \emph{STC-tube} on $x$ and
we say that a point $x \in  M$ fulfills the \emph{Strong Tube
Condition} (we write STC), if there exists a STC-tube $F(L,  [0,
2 \lambda])$ through $x$.

Theorem \ref{theorem2.1} concerns the continuity of $\phi$ which is
accomplished outside of $M$.

\begin{theorem}\rm\cite[Theorem 3.8]{Ciesielski2}\label{theorem2.1}\it \,
Consider an impulsive semidynamical system $(X,  \pi; M,  I)$.
Assume that no initial point in $( X,  \pi)$ belongs to the impulsive
set $M$ and that each element of $M$ satisfies the condition $TC$.
Then $\phi$ is continuous at $x$ if and only if $x \notin  M$.
\end{theorem}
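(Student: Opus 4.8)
The plan is to establish the two implications separately. The direction ``$x\in M \Rightarrow \phi$ is discontinuous at $x$'' is elementary and uses only the hypothesis that no initial point lies in $M$, whereas the converse is the substantial part and is where the Tube Condition is needed.

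I would first dispose of the easy implication. Let $x\in M$. The defining property of the impulsive set gives $\pi(x,(0,\epsilon_x))\cap M=\emptyset$, so the motion of $x$ cannot meet $M$ before time $\epsilon_x$ and hence $\phi(x)\ge \epsilon_x>0$. Since no initial point belongs to $M$, the point $x$ is not initial, so there exist $y\in X$ and $t_0>0$ with $\pi(y,t_0)=x$. Put $w_s=\pi(y,s)$ for $0\le s<t_0$; then $w_s\to \pi(y,t_0)=x$ as $s\uparrow t_0$ by continuity of $\pi$, while $\pi(w_s,t_0-s)=x\in M$ forces $0<\phi(w_s)\le t_0-s$. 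Thus $\phi(w_s)\to 0\neq \phi(x)$, so $\phi$ is not continuous at $x$, which proves the ``only if'' part in contrapositive form.

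For the converse, fix $x\notin M$ and an arbitrary sequence $x_n\to x$. The first step is a lower-semicontinuity estimate that needs only closedness of $M$: if some subsequence had $\phi(x_{n_k})\to t^\ast$ with $t^\ast$ finite, then $\pi(x_{n_k},\phi(x_{n_k}))\in M$ would converge to $\pi(x,t^\ast)\in M$, and the case $t^\ast=0$ contradicts $x\notin M$ while the case $0<t^\ast<\phi(x)$ contradicts the minimality in the definition of $\phi$. This immediately settles the subcase $\phi(x)=+\infty$, since then no finite subsequential limit is possible and $\phi(x_n)\to+\infty$; it also yields $\liminf_n\phi(x_n)\ge t_0$ in the remaining subcase $\phi(x)=t_0\in(0,+\infty)$. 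The crux is then the matching upper bound, and this is exactly where the Tube Condition enters. Writing $y=\pi(x,t_0)\in M$, I would choose a TC-tube $F(L,[0,2\lambda])$ through $y$ with section $S=F(L,\lambda)\subset M$, so that $y\in S$ and $\pi(y,\lambda)\in L$. For large $n$ the point $p_n=\pi(x_n,t_0)$ lies in this tube, hence has a unique $a_n\in[0,2\lambda]$ with $\pi(p_n,a_n)\in L$. Passing to any subsequence with $a_{n_k}\to a^\ast$ and using continuity of $\pi$ and closedness of $L$ gives $\pi(y,a^\ast)\in L$; since also $\pi(y,\lambda)\in L$, the disjointness axiom $(c)$ of the section forces $a^\ast=\lambda$, so $a_n\to\lambda$. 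Because $\pi(x_n,t_0+a_n)=\pi(p_n,a_n)\in L$, we get $\pi(x_n,t_0+a_n-\lambda)\in F(L,\lambda)=S\subset M$, and since $t_0+a_n-\lambda\to t_0>0$ this hitting time is eventually positive; hence $\phi(x_n)\le t_0+a_n-\lambda\to t_0$. Combining with the lower bound yields $\phi(x_n)\to t_0=\phi(x)$.

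The main obstacle is this last upper-semicontinuity step, and within it the claim $a_n\to\lambda$: it is precisely the Tube Condition---a section contained in $M$ through the hitting point $y$---that prevents nearby trajectories from slipping past $M$ and making $\phi$ jump upward, and the uniqueness encoded in axiom $(c)$ is what pins down the limit of the section-times $a_n$. I would also check the minor point that the single relation $\pi(x_n,t_0+a_n-\lambda)\in S$ covers both possibilities $a_n\ge\lambda$ and $a_n<\lambda$ (that is, whether $p_n$ sits before or after the section) and that all the times involved are nonnegative for large $n$, so that the estimate is legitimate.
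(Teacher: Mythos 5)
This statement is quoted in the paper from \cite[Theorem 3.8]{Ciesielski2} without proof, so there is no internal argument of the paper to compare yours against; your proposal stands or falls on its own, and it stands. Your argument is correct and is essentially the standard tube argument: discontinuity at points of $M$ by approaching $x\in M$ along its backward orbit (available since no initial point lies in $M$), where the hitting times of the approximating points are squeezed to $0$ while $\phi(x)\geq \epsilon_x>0$; and continuity at $x\notin M$ split into lower semicontinuity, which uses only closedness of $M$ and joint continuity of $\pi$, plus upper semicontinuity, which is exactly where the TC-tube through $y=\pi(x,t_0)$ enters. The delicate points are all handled soundly: uniqueness of the section-times $a_n$ via axiom $(c)$, the limit $a_n\to\lambda$ via closedness of $L$ and disjointness of the sets $F(L,\mu)$, the inclusion $S=F(L,\lambda)\subset M$ guaranteed by TC, the eventual positivity of $t_0+a_n-\lambda$, and the extended-value case $\phi(x)=+\infty$; together these give $\phi(x_n)\le t_0+a_n-\lambda\to t_0$ and $\liminf_n\phi(x_n)\ge \phi(x)$, hence continuity.
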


\section{Additional definitions and auxiliary results}

\hspace{.5cm} Let $(X,\pi; M,I)$ be an ISS. Throughout this paper, we shall assume the following conditions:
\begin{enumerate}\item[(H1)] No initial point in $(X,\pi)$ belongs to the impulsive set $M$ and each element of $M$ satisfies the condition STC, consequently $\phi$ is continuous on $X\setminus M$;
\item[(H2)] $M\cap I(M)=\emptyset$;
\item[(H3)] For each $x\in X$, the motion $\tilde{\pi}(x, t)$ is defined for every $t\geq 0$.
\end{enumerate}

Given $A\subset X$ and $\Delta\subset
\mathbb{R}_{+}$, we denote
$$\tilde{\pi}(A, \Delta) = \{\tilde{\pi}(x, t): \, x \in A, \, t \in
\Delta\} \quad \text{and} \quad \tilde{\pi}^+(A) = \displaystyle\bigcup_{x\in A}\tilde{\pi}^+(x).$$

A set $A$ is called \emph{positively $\tilde{\pi}$-invariant} if $\tilde{\pi}^+(A)\subset A$. Also, if $I(A\cap M) \subset A$ then we say that $A$ is $I$-invariant.

The \emph{positive limit set} of a point $x \in X$ in $(X, \pi; M, I)$ is given by
\[
\tilde{L}^{+}(x)=\{y\in X: \; \text{there is a sequence} \;
\{\lambda_n\}_{n \in \mathbb{N}} \subset \mathbb{R}_{+} \quad \text{such
that}
\]
\[
\quad\qquad \lambda_{n}\stackrel{n\to+\infty}{\longrightarrow}+\infty
\quad\mbox{and}\quad
\tilde{\pi}(x, \lambda_{n})\stackrel{n\to+\infty}{\longrightarrow}
y\}.
\]

By \cite[Proposition 4.3]{Manuel}  the set $\tilde{L}^{+}(x)\setminus M$ is positively $\tilde{\pi}$-invariant for all $x\in X$.

Next, we exhibit three auxiliary results concerning convergence on impulsive systems.

\begin{lemma}\label{lema1}\rm\cite[Lemma 3.8]{BonottoMatheus} \it
Let $x\notin M$ and $\{x_n\}_{n\in \mathbb{N}}$ be a sequence in $X\setminus M$ such that $x_n\stackrel{n\to+\infty}{\longrightarrow} x$. Then if $\alpha_n \stackrel{n\to+\infty}{\longrightarrow} 0$ and
$\alpha_n\geq 0$, for all $n\in \mathbb{N}$, we have $\tilde{\pi}(x_n, \alpha_n)\stackrel{n\to+\infty}{\longrightarrow} x$.
\end{lemma}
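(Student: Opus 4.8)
The plan is to reduce the impulsive trajectory $\tilde{\pi}(x_n,\alpha_n)$ to the ordinary semidynamical motion $\pi(x_n,\alpha_n)$ for all large $n$, and then to invoke the continuity of $\pi$. The crucial point is that the hypothesis $x\notin M$ forces the motions of the nearby points $x_n$ to suffer no impulse before the small time $\alpha_n$, so that the impulsive and the non-impulsive trajectories coincide there.

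First I would observe that, since $x\notin M$, we have $\phi(x)>0$; indeed either $\phi(x)=+\infty$ when $M^{+}(x)=\emptyset$, or $\phi(x)=s_0>0$ by the definition of $\phi$ in \eqref{phi}. By hypothesis (H1), $\phi$ is continuous on $X\setminus M$, hence continuous at $x$, so $\phi(x_n)\to\phi(x)$. In particular there exist $\delta>0$ and $n_0\in\mathbb{N}$ such that $\phi(x_n)>\delta$ for all $n\geq n_0$: when $\phi(x)$ is finite one may take any $\delta\in(0,\phi(x))$, and when $\phi(x)=+\infty$ any $\delta>0$ suffices.

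Next, since $\alpha_n\to 0$ with $\alpha_n\geq 0$, there is $n_1\in\mathbb{N}$ with $\alpha_n<\delta$ for all $n\geq n_1$. Hence for every $n\geq\max\{n_0,n_1\}$ we have $0\leq\alpha_n<\delta<\phi(x_n)$, so the motion of $x_n$ meets no point of $M$ on $[0,\alpha_n]$ and therefore $\tilde{\pi}(x_n,\alpha_n)=\pi(x_n,\alpha_n)$ directly from the inductive construction of the impulsive trajectory. Finally, using the continuity of the semidynamical mapping $\pi$ together with $x_n\to x$ and $\alpha_n\to 0$, I would conclude $\pi(x_n,\alpha_n)\to\pi(x,0)=x$, which gives $\tilde{\pi}(x_n,\alpha_n)\to x$.

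The main obstacle is the second step: guaranteeing that the impulse times $\phi(x_n)$ stay bounded away from zero, so that no jump is triggered inside the shrinking interval $[0,\alpha_n]$. This is exactly where the assumption $x\notin M$ is indispensable, since it is precisely at points outside $M$ that (H1) grants the continuity of $\phi$. Without it one could have $\phi(x_n)\to 0$, an impulse could occur before time $\alpha_n$, and the identification $\tilde{\pi}(x_n,\alpha_n)=\pi(x_n,\alpha_n)$ would fail.
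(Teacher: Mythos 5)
Your proof is correct and is essentially the standard argument: this paper does not prove Lemma \ref{lema1} itself (it imports it from \cite{BonottoMatheus}), and the argument there runs exactly along your lines --- continuity of $\phi$ on $X\setminus M$ guaranteed by (H1) gives $\phi(x_n)$ bounded away from $0$, so $\tilde{\pi}(x_n,\alpha_n)=\pi(x_n,\alpha_n)$ for large $n$, and joint continuity of $\pi$ finishes the proof. Your explicit treatment of the case $\phi(x)=+\infty$ is a nice touch; no gaps.
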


\begin{lemma}\label{lema2}\rm\cite[Corollary 3.9]{BonottoMatheus} \it
Let  $\{x_{n}\}_{n\in \mathbb{N}}$ be a sequence in $X$
which converges to  $x\in X\setminus M$. Then, given $t\geq 0$, there
is a sequence $\{\epsilon_{n}\}_{n\in \mathbb{N}} \subset \mathbb{R}_{+}$ such
that  $\epsilon_{n} \stackrel{n\to+\infty}{\longrightarrow} 0$ and  $\tilde{\pi}(x_{n},
t+\epsilon_{n})\stackrel{n\to+\infty}{\longrightarrow} \tilde{\pi}(x,t)$. \end{lemma}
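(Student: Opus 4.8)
The plan is to argue by induction on the number $k$ of impulse times of $\tilde{\pi}(x,\cdot)$ lying in the interval $[0,t]$. This number is finite: for any point $z\notin M$ one has $\phi(z)>0$, and by (H2) every post-impulse point lies off $M$, so each elapsed time $s_i=\phi(x_i^{+})$ is strictly positive; since (H3) makes the motion defined on all of $\mathbb{R}_{+}$, the impulse times cannot accumulate at a finite instant, hence only finitely many of them occur before $t$. Since $x\notin M$ and $M$ is closed, $X\setminus M$ is an open neighbourhood of $x$; thus $x_n\in X\setminus M$ for all large $n$, and discarding the finitely many exceptional terms (setting $\epsilon_n=0$ there) I may assume $x_n\in X\setminus M$ for every $n$. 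By (H1) and Theorem~\ref{theorem2.1}, $\phi$ is continuous at $x$, so $\phi(x_n)\to\phi(x)$, a fact I use repeatedly.

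For the base case $k=0$ I have $t<\phi(x)$. Continuity of $\phi$ gives $\phi(x_n)>t$ for large $n$, so $\tilde{\pi}(x_n,t)=\pi(x_n,t)$ and $\tilde{\pi}(x,t)=\pi(x,t)$; continuity of $\pi$ then yields $\tilde{\pi}(x_n,t)\to\tilde{\pi}(x,t)$, and $\epsilon_n=0$ works.

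The heart of the argument is the passage through the first impulse, and this is exactly where the sign constraint $\epsilon_n\ge 0$ becomes delicate. Write $t_1=\phi(x)\le t$, $x_1=\pi(x,t_1)\in M$, and $y=\tilde{\pi}(x,t_1)=I(x_1)\notin M$ (using (H2)). I claim there are times $\tau_n\to t_1$ with $\tau_n\ge t_1$ such that $\tilde{\pi}(x_n,\tau_n)\to y$. Indeed, set $\tau_n=\max\{t_1,\phi(x_n)\}$. When $\phi(x_n)\ge t_1$, we have $\tau_n=\phi(x_n)$ and $\tilde{\pi}(x_n,\phi(x_n))=I(\pi(x_n,\phi(x_n)))\to I(x_1)=y$ by continuity of $\pi$ and $I$ together with $\phi(x_n)\to t_1$. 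When $\phi(x_n)<t_1$, we have $\tau_n=t_1$ and, by the semigroup property, $\tilde{\pi}(x_n,t_1)=\tilde{\pi}\big(\tilde{\pi}(x_n,\phi(x_n)),\,t_1-\phi(x_n)\big)$, where the inner sequence tends to $y\notin M$ and $t_1-\phi(x_n)\to 0^{+}$, so Lemma~\ref{lema1} gives $\tilde{\pi}(x_n,t_1)\to y$. Taking the maximum is precisely what keeps the increment $\tau_n-t_1\ge 0$, absorbing the possibly ``early'' impulses $\phi(x_n)<t_1$ into a forward-in-time perturbation rather than a backward one; I expect this to be the main obstacle, and Lemma~\ref{lema1} is the tool that resolves it.

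Finally I would close the induction. Put $w_n=\tilde{\pi}(x_n,\tau_n)\to y$ (note every value of an impulsive trajectory lies off $M$, being either an ordinary flow point before the hitting time or a post-impulse point, off $M$ by (H2)). By the semigroup property $\tilde{\pi}(y,\cdot)=\tilde{\pi}(x,t_1+\cdot)$, so the motion of $y$ has exactly $k-1$ impulse times in $[0,t-t_1]$; the induction hypothesis applied to $w_n\to y$ and the instant $t-t_1$ then furnishes $\epsilon_n'\ge 0$ with $\epsilon_n'\to 0$ and $\tilde{\pi}(w_n,(t-t_1)+\epsilon_n')\to\tilde{\pi}(y,t-t_1)=\tilde{\pi}(x,t)$. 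Using the semigroup property once more, $\tilde{\pi}(w_n,(t-t_1)+\epsilon_n')=\tilde{\pi}(x_n,t+\epsilon_n)$ with $\epsilon_n=(\tau_n-t_1)+\epsilon_n'$. Since $\tau_n\ge t_1$ and $\epsilon_n'\ge 0$, we get $\epsilon_n\ge 0$, while $\tau_n\to t_1$ and $\epsilon_n'\to 0$ give $\epsilon_n\to 0$, which completes the induction and the proof.
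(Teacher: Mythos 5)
The paper never proves this lemma: it is quoted verbatim from the reference \cite{BonottoMatheus} (their Corollary 3.9), so there is no in-paper argument to compare yours against; your proof has to be judged on its own, and it holds up. The induction is well-founded because (H3) forces $T(z)=+\infty$, so the impulse times of $\tilde{\pi}(x,\cdot)$ cannot accumulate and only finitely many lie in $[0,t]$; the base case follows from the continuity of $\phi$ on $X\setminus M$ guaranteed by (H1); and the inductive step is sound: when $\phi(x_n)\ge \phi(x)$ the jump points satisfy $\tilde{\pi}(x_n,\phi(x_n))=I(\pi(x_n,\phi(x_n)))\to I(x_1)=y$ by continuity of $\pi$, $I$ and $\phi$, while when $\phi(x_n)<\phi(x)$ your appeal to Lemma \ref{lema1} is legitimate precisely because (H2) places the post-jump points $I(\pi(x_n,\phi(x_n)))$, and the limit $y=I(x_1)$, off $M$. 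The choice $\tau_n=\max\{\phi(x),\phi(x_n)\}$ is exactly what keeps the perturbation one-sided ($\epsilon_n\ge 0$), which is the only genuinely delicate feature of the statement, and the final bookkeeping $\epsilon_n=(\tau_n-t_1)+\epsilon_n'$ closes the induction correctly via the semigroup property $\tilde{\pi}(\tilde{\pi}(x_n,\tau_n),s)=\tilde{\pi}(x_n,\tau_n+s)$. Your argument uses only tools the paper itself assumes or imports (Lemma \ref{lema1}, (H1)--(H3), continuity of $\pi$, $I$ and $\phi$ off $M$), and its structure --- splitting the trajectory at impulse times and pushing time forward across each jump --- is the standard one in the impulsive-systems literature from which the paper cites the result, so it can stand as a faithful self-contained substitute for the missing proof.
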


\begin{lemma}\label{lema3}\rm\cite[Lemma 2.4]{artigo2} \it
Let $x\in X\setminus M$ and $\{x_{n}\}_{n\in \mathbb{N}}\subset X$ be a sequence which converges to $x$. Given $t \geq 0$ such that $t \neq
t_{k}(x)$, $k =0,1,2,\ldots$,  and $\{\lambda_{n}\}_{n\in \mathbb{N}} \subset \mathbb{R}_{+}$ is a sequence with  $\lambda_n \stackrel{n\to+\infty}{\longrightarrow} t$ then
 $\tilde{\pi}(x_n, \, \lambda_n) \stackrel{n \rightarrow
+\infty}{\longrightarrow} \tilde{\pi}(x, \, t).$
\end{lemma}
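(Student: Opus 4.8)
The plan is to prove the statement by induction on the number $k$ of impulses that the trajectory of $x$ undergoes before the instant $t$. Since $t\neq t_j(x)$ for every $j$, this number is well defined by $t_k(x)<t<t_{k+1}(x)$, and it is finite because (H3) rules out an accumulation of impulse times before the finite instant $t$. I would use three ingredients throughout: the joint continuity of the non-impulsive flow $\pi$; the continuity of $\phi$ at every point off $M$, granted by (H1) and Theorem~\ref{theorem2.1}; and the elementary observation that, $M$ being closed and $x\notin M$, any sequence converging to a point off $M$ eventually stays off $M$.

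For the base case $k=0$ we have $0\le t<t_1(x)=\phi(x)$, so $\tilde{\pi}(x,t)=\pi(x,t)$. Since $\phi$ is continuous at $x\notin M$, $\phi(x_n)\to\phi(x)$, and as $\lambda_n\to t<\phi(x)$ we get $\lambda_n<\phi(x_n)$ for all large $n$; hence $\tilde{\pi}(x_n,\lambda_n)=\pi(x_n,\lambda_n)$ eventually, and continuity of $\pi$ gives $\pi(x_n,\lambda_n)\to\pi(x,t)=\tilde{\pi}(x,t)$. (The case $\phi(x)=+\infty$ is identical, reading $\phi(x_n)\to+\infty$.)

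For the inductive step I would assume the conclusion for target times preceded by at most $k-1$ impulses and take $t_k(x)<t<t_{k+1}(x)$ with $k\ge 1$. Fix $\tau$ with $t_{k-1}(x)<\tau<t_k(x)$ and set $z=\tilde{\pi}(x,\tau)\notin M$. Applying the inductive hypothesis with the constant sequence $\lambda_n\equiv\tau$ yields $z_n:=\tilde{\pi}(x_n,\tau)\to z$. Using the cocycle identity $\tilde{\pi}(x_n,\lambda_n)=\tilde{\pi}(z_n,\lambda_n-\tau)$ (valid once $\lambda_n>\tau$, which holds eventually) together with $\tilde{\pi}(x,t)=\tilde{\pi}(z,t-\tau)$, the problem reduces to the case in which exactly one impulse occurs before the target time $s:=t-\tau\in(\phi(z),\phi(z)+\phi(w^+))$, where $w:=\pi(z,\phi(z))\in M$ and $w^+:=I(w)$. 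To settle this one-impulse case, note $\phi(z_n)\to\phi(z)$ by continuity of $\phi$ at $z\notin M$, so $w_n:=\pi(z_n,\phi(z_n))\to\pi(z,\phi(z))=w$ by continuity of $\pi$, and $w_n^+:=I(w_n)\to I(w)=w^+$ by continuity of $I$. Because $w^+=I(w)\notin M$ by (H2), $\phi$ is continuous at $w^+$, whence $\phi(w_n^+)\to\phi(w^+)$. Writing $\mu_n:=\lambda_n-\tau\to s$, the limit $\mu_n-\phi(z_n)\to s-\phi(z)\in(0,\phi(w^+))$ forces, for all large $n$, both $\mu_n>\phi(z_n)$ and $\mu_n-\phi(z_n)<\phi(w_n^+)$, so that $\tilde{\pi}(z_n,\mu_n)=\pi(w_n^+,\mu_n-\phi(z_n))$. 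Joint continuity of $\pi$ then gives $\pi(w_n^+,\mu_n-\phi(z_n))\to\pi(w^+,s-\phi(z))=\tilde{\pi}(z,s)$, closing the induction.

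The step I expect to be delicate is precisely this last one: confining the instant $\mu_n$ strictly between the first and second impulses of $z_n$. This is where the hypothesis $t\neq t_k(x)$ is indispensable, for it produces the two strict inequalities $s-\phi(z)>0$ (the relevant impulse has already occurred) and $s-\phi(z)<\phi(w^+)$ (the next one has not), and both survive the passage to the limit only because $\phi$ is continuous at the two off-$M$ points $z$ and $w^+=I(w)$, the latter available thanks to (H2). Had $t$ equalled an impulse time, $\mu_n$ could fall on either side of the jump and the conclusion would fail, which is exactly why the excluded instants $t_k(x)$ must be ruled out.
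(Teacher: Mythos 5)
The first thing to say is that the paper contains no proof of this lemma: it is reproduced, statement only, from \cite[Lemma 2.4]{artigo2}, so there is no internal argument to compare yours against, and the proposal must be judged on its own merits. On those merits it is correct. The induction on the number of impulses preceding $t$ is well founded because (H3) gives $T(x)=+\infty$, hence $t_k(x)\to+\infty$ and the index $k$ with $t_k(x)<t<t_{k+1}(x)$ exists (allowing $t_{k+1}(x)=+\infty$ when the trajectory suffers only $k$ impulses); the base case is exactly the continuity of $\phi$ at $x\notin M$ furnished by (H1) and Theorem~\ref{theorem2.1}; the reduction via $z=\tilde{\pi}(x,\tau)$ with $\tau\in(t_{k-1}(x),t_k(x))$ legitimately uses the semigroup property of $\tilde{\pi}$ recalled in Section 2, together with the fact that $z\notin M$ (points strictly between consecutive impulses never lie on $M$, by the definition of $\phi$); and the one-impulse step correctly locates where each structural hypothesis enters: continuity of $I$ gives $w_n^{+}\to w^{+}$, (H2) gives $w^{+}=I(w)\notin M$ so that $\phi$ is continuous there as well, and the two strict inequalities $\phi(z)<s<\phi(z)+\phi(w^{+})$, available precisely because $t\neq t_k(x)$, trap $\mu_n$ strictly between the first and second impulse times of $z_n$ for all large $n$. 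Three small remarks. First, the case $t=0$ you include in the base step is vacuous, since $t_0(x)=0$ is excluded by hypothesis (it is nevertheless true, by Lemma~\ref{lema1}). Second, the continuity of $\phi$ at points where $\phi=+\infty$ must be read in the extended sense, $\phi(x_n)\to+\infty$; you flag this in the base case, but the same reading is needed in the inductive step when $\phi(w^{+})=+\infty$, and your inequalities do accommodate it without change. Third, the ``eventually off $M$'' observation you list among your ingredients is never actually used: your argument nowhere needs $x_n\notin M$, only $z\notin M$ and $w^{+}\notin M$, which come from the flow structure and (H2). This cocycle reduction to a single one-impulse step is the standard route for convergence lemmas of this type in the impulsive literature, and it relies only on tools the present paper makes available, namely (H1)--(H3), Theorem~\ref{theorem2.1}, and the continuity of $\pi$ and $I$.
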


Lemma \ref{jaqueline} below establishes sufficient conditions for a limit set to contain points outside from $M$.

\begin{lemma} \rm\cite[Lemma 4.15]{jaqueline}\label{jaqueline} \it Let $x\in X$ be a point such that $\tilde{L}^{+}(x)\neq \emptyset$, then \linebreak$\tilde{L}^{+}(x)\setminus M\neq \emptyset$.
\end{lemma}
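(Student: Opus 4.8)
The plan is to fix a point $y\in\tilde{L}^{+}(x)$ and show that either $y$ already lies outside $M$, or else a point produced by the impulse map lies in $\tilde{L}^{+}(x)\setminus M$. Since $y\in\tilde{L}^{+}(x)$, choose $\lambda_n\to+\infty$ with $z_n:=\tilde{\pi}(x,\lambda_n)\to y$. If $y\notin M$ there is nothing to prove, so assume $y\in M$. First I would record that, by the inductive description of the impulsive trajectory together with (H2), every value $\tilde{\pi}(x,t)$ with $t>0$ either lies on an open arc strictly before its next meeting with $M$, or equals some $I(x_k)\in I(M)$; in both situations it avoids $M$. Hence $z_n\notin M$ for all large $n$, the continuous hitting time $\phi(z_n)$ is defined, and the argument then splits into the two exhaustive cases $\liminf_n\phi(z_n)=0$ and $\liminf_n\phi(z_n)>0$ (passing to a subsequence in the first).

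In the case $\liminf_n\phi(z_n)=0$, pick a subsequence with $\phi(z_{n_k})\to 0$. The continuous flow carries $z_{n_k}$ to the meeting point $w_{n_k}:=\pi(z_{n_k},\phi(z_{n_k}))\in M$, and by joint continuity of $\pi$ (with $z_{n_k}\to y$ and $\phi(z_{n_k})\to 0$) one gets $w_{n_k}\to\pi(y,0)=y$. Since the impulsive trajectory agrees with $\pi(z_{n_k},\cdot)$ up to the first impulse, it jumps exactly at time $\lambda_{n_k}+\phi(z_{n_k})$ to $I(w_{n_k})=\tilde{\pi}(x,\lambda_{n_k}+\phi(z_{n_k}))$. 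Continuity of $I$ yields $I(w_{n_k})\to I(y)$, while $\lambda_{n_k}+\phi(z_{n_k})\to+\infty$, so $I(y)\in\tilde{L}^{+}(x)$. As $I(y)\in I(M)$ and $M\cap I(M)=\emptyset$ by (H2), we get $I(y)\notin M$, and the conclusion follows.

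In the remaining case there are $\delta>0$ and $N$ with $\phi(z_n)>\delta$ for $n\geq N$, so no impulse occurs along $z_n$ before time $\delta$ and $\tilde{\pi}(x,\lambda_n+s)=\pi(z_n,s)$ for $0\leq s\leq\delta$ (the relevant times belonging to the domain by (H3)). Using the defining property of the impulsive set, namely that there is $\epsilon_y>0$ with $\pi(y,(0,\epsilon_y))\cap M=\emptyset$, fix $s_0$ with $0<s_0<\min\{\delta,\epsilon_y\}$, so that $\pi(y,s_0)\notin M$. Then $\tilde{\pi}(x,\lambda_n+s_0)=\pi(z_n,s_0)\to\pi(y,s_0)$ by continuity of $\pi$, and since $\lambda_n+s_0\to+\infty$ this exhibits $\pi(y,s_0)\in\tilde{L}^{+}(x)\setminus M$.

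The main obstacle I anticipate is the bookkeeping in the first case: one must verify that the impulsive trajectory coincides with the continuous motion $\pi(z_{n_k},\cdot)$ precisely up to time $\phi(z_{n_k})$, so that $\tilde{\pi}(x,\lambda_{n_k}+\phi(z_{n_k}))$ is genuinely the post-impulse point $I(w_{n_k})$, and that $w_{n_k}\to y$ through joint continuity of $\pi$ as the elapsed time shrinks to $0$. The second case is comparatively soft, relying only on continuity of $\pi$ and on the fact that the motion leaves $M$ immediately after passing through $y$.
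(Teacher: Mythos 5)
Your proof is correct, but there is nothing in the paper to compare it against: the lemma is imported verbatim from \cite{jaqueline} (Lemma 4.15 there), and the paper gives no proof of it. Judged on its own, your argument is complete. The preliminary observation that $\tilde{\pi}(x,t)\notin M$ for all $t>0$ is exactly what (H2) plus the inductive construction of the trajectory give, so $z_n=\tilde{\pi}(x,\lambda_n)\notin M$ for large $n$ and the dichotomy on $\liminf_n\phi(z_n)$ is exhaustive. In the first case the identity $\tilde{\pi}(x,\lambda_{n_k}+\phi(z_{n_k}))=I(w_{n_k})$ is just the definition of the impulsive trajectory combined with the semigroup property of $\tilde{\pi}$, and joint continuity of $\pi$ plus continuity of $I$ yield $I(y)\in\tilde{L}^{+}(x)$, which (H2) places outside $M$; in the second case the ISS axiom $\pi(y,(0,\epsilon_y))\cap M=\emptyset$ supplies the required limit point $\pi(y,s_0)$. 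Two remarks on what your route buys. First, you never use (H1): no tube, no STC, no continuity of $\phi$ enters --- only the bare ISS axioms, continuity of $\pi$ and $I$, and (H2) --- whereas arguments for statements of this type in the impulsive literature (including the cited source, whose standing hypotheses include the tube condition) typically decompose a tube $F(L,[0,2\lambda])$ around the point $y\in M$ into its upstream and downstream halves; your dichotomy on hitting times replaces that geometric decomposition. Second, your Case 1 is essentially the same device this paper uses in Case 2 of the proof of Theorem \ref{T2.1}: there, $\phi_Y(\tilde{\sigma}(y,s_{n_k}))\to 0$ forces $h(q)=I_Y(h(q))$, contradicting (H2); here, $\phi(z_{n_k})\to 0$ produces the limit point $I(y)$, with (H2) used affirmatively rather than for a contradiction. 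So the proposal is not only sound but slightly more general than the setting in which the paper invokes the lemma.
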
 

A point $x\in X$ is called \emph{stationary}
with respect to $(X,\pi;M,I)$, if $\tilde{\pi}(x,t)=x$
\mbox{\mbox{for all} }  $t\geq 0$. If $\tilde{\pi}(x,\tau)=x$
for some $\tau> 0$, then $x$ will be called
\emph{$\tilde{\pi}$-periodic}. The point $x\in X$ is called \emph{positively Poisson $\tilde{\pi}$-stable} if $x \in \tilde{L}^+(x)$.

A point $x\in X$  is said to be \emph{almost
$\tilde{\pi}$-periodic} if for every $\epsilon>0$, there exists a $T=T(\epsilon)>0$ such that for every $\alpha \geq 0$, the interval $[\alpha,\alpha+T]$ contains a number
$\tau_{\alpha}>0$ such that
\[
\rho(\tilde{\pi}(x,t+\tau_{\alpha}), \tilde{\pi}(x,t))<\epsilon \quad \mbox{\mbox{for all} }  \; t\geq 0.
\]
The set $\{\tau_{\alpha}: \, \alpha \geq 0\}$ is called a \textit{family of almost period} of $x$. The reader may consult \cite{artigo2, Manuel} for more details about the theory of 
$\tilde{\pi}$-periodic motions.

\begin{lemma} \rm\cite[Lemma 4.22]{Manuel}\label{EM1} \it If $x\in X$ is almost $\tilde{\pi}$-periodic, then every point $y \in \tilde{\pi}^+(x)$ is also almost $\tilde{\pi}$-periodic.
\end{lemma}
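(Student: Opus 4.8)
The plan is to reduce the almost $\tilde{\pi}$-periodicity of $y$ to that of $x$ by means of the semigroup property. First I would write $y=\tilde{\pi}(x,t_0)$ for some $t_0\geq 0$, which is legitimate since $y\in\tilde{\pi}^+(x)$. Because hypothesis (H3) guarantees that the motion through every point is defined for all nonnegative times, the impulsive semigroup identity $\tilde{\pi}(\tilde{\pi}(x,t_0),s)=\tilde{\pi}(x,t_0+s)$ holds for every $s\geq 0$, so that $\tilde{\pi}(y,s)=\tilde{\pi}(x,t_0+s)$ for all $s\geq 0$. This is the only structural fact about impulsive systems that the argument requires.

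Next, fix $\epsilon>0$. Since $x$ is almost $\tilde{\pi}$-periodic, there is $T=T(\epsilon)>0$ such that for every $\alpha\geq 0$ the interval $[\alpha,\alpha+T]$ contains a number $\tau_\alpha>0$ satisfying $\rho(\tilde{\pi}(x,s+\tau_\alpha),\tilde{\pi}(x,s))<\epsilon$ for all $s\geq 0$. I claim that the very same $T$ and the same family $\{\tau_\alpha:\alpha\geq 0\}$ witness the almost periodicity of $y$. Indeed, for an arbitrary $t\geq 0$, applying the semigroup identity with the shift $t_0$ gives
\[
\rho(\tilde{\pi}(y,t+\tau_\alpha),\tilde{\pi}(y,t))=\rho(\tilde{\pi}(x,(t_0+t)+\tau_\alpha),\tilde{\pi}(x,t_0+t)).
\]
Setting $s=t_0+t$, one has $s\geq t_0\geq 0$, so the right-hand side is strictly less than $\epsilon$ by the estimate for $x$. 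As $t\geq 0$ was arbitrary, the inequality $\rho(\tilde{\pi}(y,t+\tau_\alpha),\tilde{\pi}(y,t))<\epsilon$ holds for all $t\geq 0$, which is exactly the required condition for $y$, with the same $T(\epsilon)$.

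The crucial observation making the argument work is that the defining inequality for almost periodicity is uniform in the time variable (``for all $s\geq 0$''), so restricting to the shifted range $s\geq t_0$ costs nothing and, in particular, leaves the period length $T(\epsilon)$ unchanged. Consequently I expect essentially no obstacle: the only hypothesis invoked is (H3), which supplies the global semigroup law, and no continuity lemma, tube condition, nor any assumption on whether $y$ lies in $M$ is needed. The conclusion is that every $y\in\tilde{\pi}^+(x)$ is almost $\tilde{\pi}$-periodic, sharing both the function $T(\epsilon)$ and the family of almost periods of $x$.
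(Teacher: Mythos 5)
Your proof is correct. The paper does not actually prove this lemma --- it quotes it from \cite[Lemma 4.22]{Manuel} --- and your argument (write $y=\tilde{\pi}(x,t_0)$, use the semigroup identity $\tilde{\pi}(y,s)=\tilde{\pi}(x,t_0+s)$ valid under (H3), and observe that the uniformity in $t\geq 0$ of the almost-periodicity estimate for $x$ makes the same $T(\epsilon)$ and the same family $\{\tau_\alpha\}$ work for $y$) is precisely the standard proof of that cited result.
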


\begin{theorem} \rm\cite[Theorem 3.2]{artigo2}\label{EG1} \it If $x\in X$ is almost $\tilde{\pi}$-periodic, then every point $y \in \overline{\tilde{\pi}^+(x)}\setminus M$ is almost $\tilde{\pi}$-periodic. Moreover, if $\{\tau_{\alpha}: \, \alpha \geq 0\}$ is a family of almost period of $x$ then  $\{\tau_{\alpha}: \, \alpha \geq 0\}$ is also a family of almost period for each $y \in \overline{\tilde{\pi}^+(x)}\setminus M$.
\end{theorem}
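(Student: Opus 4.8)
The plan is to prove the stronger (\emph{Moreover}) assertion, from which the first claim follows immediately. Fix $\epsilon>0$ and let $\{\tau_\alpha:\alpha\geq 0\}$ be a family of almost periods of $x$ associated with $\epsilon$ (with window length $T=T(\epsilon)$); I will show that $\rho(\tilde{\pi}(y,t+\tau_\alpha),\tilde{\pi}(y,t))\leq\epsilon$ for every $t\geq 0$ and every $\alpha\geq 0$. Since $y\in\overline{\tilde{\pi}^+(x)}$, I first choose a sequence $y_n=\tilde{\pi}(x,s_n)\in\tilde{\pi}^+(x)$ with $y_n\to y$. The key observation is that the bound transfers to each orbit point \emph{without loss}: using the semigroup identity $\tilde{\pi}(\tilde{\pi}(x,s_n),r)=\tilde{\pi}(x,s_n+r)$ (valid for all times by (H3)) together with $s_n+t\geq 0$, one gets
\[
\rho(\tilde{\pi}(y_n,t+\tau_\alpha),\tilde{\pi}(y_n,t))=\rho(\tilde{\pi}(x,s_n+t+\tau_\alpha),\tilde{\pi}(x,s_n+t))<\epsilon
\]
for all $n$ and all $t,\alpha\geq 0$. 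This is precisely the mechanism behind Lemma \ref{EM1}, and it reduces everything to passing to the limit $n\to\infty$.

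The obstruction to taking that limit is that $\tilde{\pi}$ is discontinuous on $M$, so I split the time variable. Fix $\alpha$ and set $B=\{t_k(y):k\geq 0\}\cup\{t_k(y)-\tau_\alpha:k\geq 0\}$, which is countable, so $\mathbb{R}\setminus B$ is dense. If $t\notin B$, then $t\neq t_k(y)$ and $t+\tau_\alpha\neq t_k(y)$ for every $k$, so Lemma \ref{lema3} (applied to $y_n\to y\in X\setminus M$ with the constant sequences $\lambda_n\equiv t$ and $\lambda_n\equiv t+\tau_\alpha$) yields $\tilde{\pi}(y_n,t)\to\tilde{\pi}(y,t)$ and $\tilde{\pi}(y_n,t+\tau_\alpha)\to\tilde{\pi}(y,t+\tau_\alpha)$. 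Letting $n\to\infty$ in the displayed inequality and using continuity of $\rho$ gives
\[
\rho(\tilde{\pi}(y,t+\tau_\alpha),\tilde{\pi}(y,t))\leq\epsilon\qquad\text{for all }t\notin B.
\]

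The hard part will be to upgrade this to \emph{all} $t\geq 0$, that is, to reach the impulse times in $B$, where the trajectory jumps and Lemma \ref{lema3} does not apply. Here I would exploit that the impulsive trajectory $s\mapsto\tilde{\pi}(y,s)$ is right-continuous at every instant: by construction it assumes the post-jump value $I(\cdot)$ at each $t_k(y)$ and flows continuously afterward. Given $t^*\in B$, I choose $t_j\downarrow t^*$ with $t_j\notin B$ (possible since $\mathbb{R}\setminus B$ is dense); right-continuity at $t^*$ and at $t^*+\tau_\alpha$ gives $\tilde{\pi}(y,t_j)\to\tilde{\pi}(y,t^*)$ and $\tilde{\pi}(y,t_j+\tau_\alpha)\to\tilde{\pi}(y,t^*+\tau_\alpha)$, and applying the previous inequality at each $t_j$ and passing to the limit yields $\rho(\tilde{\pi}(y,t^*+\tau_\alpha),\tilde{\pi}(y,t^*))\leq\epsilon$. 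Combining the two cases establishes the bound for all $t\geq 0$, so $\{\tau_\alpha\}$ is a family of almost periods for $y$ and, in particular, $y$ is almost $\tilde{\pi}$-periodic. The only cosmetic issue is that the limit turns the strict inequality $<\epsilon$ into $\leq\epsilon$; this is harmless, since running the argument with a family of $x$ associated to $\epsilon/2$ restores a strict $<\epsilon$ bound for $y$. I expect the crossing of the impulse times to be the genuine obstacle, everything else being a straightforward combination of the semigroup law, Lemma \ref{lema3}, and density of $\mathbb{R}\setminus B$.
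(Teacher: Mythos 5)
This theorem is one the paper imports verbatim from \cite[Theorem 3.2]{artigo2}; no proof is given in this paper, so there is no internal argument to compare yours against. Judged on its own, your proof is correct, and it uses only facts available here: the semigroup identity $\tilde{\pi}(\tilde{\pi}(x,s),r)=\tilde{\pi}(x,s+r)$ (valid for all times by (H3)) transfers the almost periods of $x$ exactly to every orbit point $y_n=\tilde{\pi}(x,s_n)$; Lemma \ref{lema3} (with constant time sequences) lets you pass to the limit $y_n\to y\in X\setminus M_X$ at every $t$ outside the countable exceptional set $B=\{t_k(y)\}\cup\{t_k(y)-\tau_\alpha\}$; and the remaining times $t^*\in B$ are recovered by right-continuity of $s\mapsto\tilde{\pi}(y,s)$, which indeed holds at every instant by the very construction of impulsive trajectories (the value at an impulse time is the post-jump point $I(\cdot)$, from which the flow continues continuously). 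Your identification of the jump times as the genuine obstacle is accurate, and your two devices (density of $\mathbb{R}_+\setminus B$ plus right-continuity) dispose of it cleanly; this is arguably more elementary than arguments in this circle of papers that instead re-approach $\tilde{\pi}(y,t)$ via the corrected-time Lemmas \ref{lema1}--\ref{lema2} and tube neighborhoods. Two small points deserve the care you already gave them: the limit degrades $<\epsilon$ to $\leq\epsilon$, which you repair with the $\epsilon/2$ family (note that even without that repair, the literal ``Moreover'' statement survives, since a $\leq\epsilon$ bound makes $\{\tau_\alpha\}$ an admissible family for any tolerance larger than $\epsilon$); and the set $B$ depends on the fixed $\alpha$, which is harmless because the estimate is proved for each $\alpha$ separately.
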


\begin{theorem} \rm\cite[Theorem 3.9]{artigo2}\label{EG2} \it If $x\in X$ is almost $\tilde{\pi}$-periodic, then $\tilde{L}^+(x) = \overline{\tilde{\pi}^+(x)}$. Moreover, $x$ is positively Poisson $\tilde{\pi}$-stable. 
\end{theorem}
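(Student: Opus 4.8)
The plan is to prove the set equality by establishing the two inclusions separately and to read off the Poisson stability as a by-product. The inclusion $\tilde{L}^{+}(x)\subseteq\overline{\tilde{\pi}^{+}(x)}$ is immediate from the definitions: if $y\in\tilde{L}^{+}(x)$, then $y=\lim_{n}\tilde{\pi}(x,\lambda_{n})$ for some sequence $\lambda_{n}\to+\infty$, and since (H3) guarantees $T(x)=+\infty$, each $\tilde{\pi}(x,\lambda_{n})$ lies in $\tilde{\pi}^{+}(x)$; hence $y$ is a limit of orbit points and belongs to $\overline{\tilde{\pi}^{+}(x)}$.

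The core of the argument is the crucial intermediate claim that the whole orbit sits inside the limit set, i.e. $\tilde{\pi}^{+}(x)\subseteq\tilde{L}^{+}(x)$. To prove this, I would fix $y=\tilde{\pi}(x,t_{0})\in\tilde{\pi}^{+}(x)$ and, for each $n\in\mathbb{N}$, apply the almost periodicity of $x$ with $\epsilon=1/n$ to obtain $T_{n}=T(1/n)>0$; choosing $\alpha_{n}=n$, the interval $[n,n+T_{n}]$ then contains a number $\tau_{n}>0$ with $\rho(\tilde{\pi}(x,t+\tau_{n}),\tilde{\pi}(x,t))<1/n$ for all $t\geq 0$. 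Setting $\lambda_{n}=t_{0}+\tau_{n}$, one has $\lambda_{n}\geq t_{0}+n\to+\infty$, while evaluating the estimate at $t=t_{0}$ gives $\rho(\tilde{\pi}(x,\lambda_{n}),y)<1/n\to 0$. Thus $\tilde{\pi}(x,\lambda_{n})\to y$ along $\lambda_{n}\to+\infty$, so $y\in\tilde{L}^{+}(x)$. It is worth stressing that continuity of $\tilde{\pi}$ is never invoked here: the convergence is delivered directly by the defining inequality of almost periodicity, which holds for every $t\geq 0$ irrespective of the position relative to $M$. Taking $t_{0}=0$ yields $x=\tilde{\pi}(x,0)\in\tilde{L}^{+}(x)$, which is precisely the positive Poisson $\tilde{\pi}$-stability of the ``moreover'' part.

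To finish the reverse inclusion I would pass to closures. Since $\tilde{L}^{+}(x)$ is closed, the inclusion $\tilde{\pi}^{+}(x)\subseteq\tilde{L}^{+}(x)$ upgrades to $\overline{\tilde{\pi}^{+}(x)}\subseteq\tilde{L}^{+}(x)$, and together with the first paragraph this gives $\tilde{L}^{+}(x)=\overline{\tilde{\pi}^{+}(x)}$. The one point requiring care, although it is mild, is the closedness of $\tilde{L}^{+}(x)$ in the impulsive setting; I would either cite it from the impulsive limit-set literature or fold it into a standard metric-space diagonal argument: if $y_{k}\to z$ with each $y_{k}\in\tilde{L}^{+}(x)$, then for each $k$ one selects a time $\lambda_{k}\geq k$ with $\rho(\tilde{\pi}(x,\lambda_{k}),y_{k})<1/k$, so that $\lambda_{k}\to+\infty$ and $\tilde{\pi}(x,\lambda_{k})\to z$, whence $z\in\tilde{L}^{+}(x)$. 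This route avoids any appeal to continuity of the semiflow across impulse instants, so the only genuine mechanism in the whole proof is the almost-periodicity inequality itself.
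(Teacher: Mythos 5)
Your proof is correct. Note first that the paper itself gives no argument for this statement: Theorem \ref{EG2} is imported by citation from \cite[Theorem 3.9]{artigo2}, so there is no in-paper proof to compare against; what follows is an assessment of your argument on its own terms. The easy inclusion $\tilde{L}^{+}(x)\subseteq\overline{\tilde{\pi}^{+}(x)}$ is handled correctly (and (H3) is indeed the reason every $\tilde{\pi}(x,\lambda_n)$ makes sense). Your key step is also sound: taking $\alpha_n=n$ forces $\tau_n\geq n$, so $\lambda_n=t_0+\tau_n\to+\infty$, and the almost-periodicity inequality evaluated at $t=t_0$ gives $\rho(\tilde{\pi}(x,\lambda_n),\tilde{\pi}(x,t_0))<1/n$ directly, with no appeal to any semigroup identity or to continuity of $\tilde{\pi}$. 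This last point is exactly the right care to take here: in an impulsive system $\tilde{\pi}$ is discontinuous across the impulse times, so a classical-flow proof leaning on joint continuity would not transfer, whereas your argument uses only the displacement inequality, which by definition holds for \emph{all} $t\geq 0$ regardless of jumps. The closedness of $\tilde{L}^{+}(x)$ via the diagonal selection $\lambda_k\geq k$, $\rho(\tilde{\pi}(x,\lambda_k),y_k)<1/k$ is likewise valid purely from the sequential definition of the limit set in a metric space, so the upgrade from $\tilde{\pi}^{+}(x)\subseteq\tilde{L}^{+}(x)$ to $\overline{\tilde{\pi}^{+}(x)}\subseteq\tilde{L}^{+}(x)$ is justified, and Poisson stability falls out at $t_0=0$ as you say. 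The proof is complete and self-contained.
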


\section{The main results}

\hspace{.5cm} This section concerns with the main results. We present a characterization for a new class of homeomorphic sets on impulsive systems via character of recurrence.
We shall consider throughout this section two impulsive semidynamical systems $(X,\pi; M_{X}, I_{X})$
and $(Y,\sigma; M_{Y}, I_{Y})$ satisfying the conditions (H1), (H2) and (H3) presented in Section 3, where $(X, \rho_X)$ and $(Y, \rho_Y)$ are metric spaces.

\subsection{Weak topological conjugation}

\hspace{.5cm} Let $(\mathbb{R}^2,\pi;M_{1},I_{1})$ be an ISS such that $$\pi((x_1, x_2),t)=(x_1+t, x_2)$$ for all $(x_1, x_2) \in \mathbb{R}^2$ and $t\geq 0$, $M_{1}=\{(x_1,x_2)\in \mathbb{R}^{2}: x_1=1\}$ and  $I_{1}:M_{1}\to \mathbb{R}^2$ is given by
$I_{1}(x_1, x_2)=\left(0,\frac{x_2}{2}\right)$ for all $(x_1, x_2) \in M_{1}$. Also, let $(\mathbb{R}^2, \sigma; M_{2},I_{2})$ be an ISS such that $$\sigma((y_1,y_2),t)=(y_1e^{-t},y_2e^{-t})$$ for all $(y_1, y_2) \in \mathbb{R}^2$ and $t\geq 0$, $M_{2}=\{(y_1,y_2)\in \mathbb{R}^{2}: y_{1}^2+y_{2}^2=e^{-2}\}$ and $I_{2}: M_{2}\to \mathbb{R}^2$ is given as follows: given $(y_1,y_2)\in M_2$ we consider the line
segment $\Gamma_{(y_1,y_2)}$ that connects the points $(y_1,y_2)$ and $(1,y_2)$. The point $I_2(y_1,y_2)$ is
the point in the intersection $\Gamma_{(y_1,y_2)}\cap \{(y_1,y_2)\in \mathbb{R}^{2}: y_{1}^2+y_{2}^2=1\}$, as we can see in Figure 2 below.

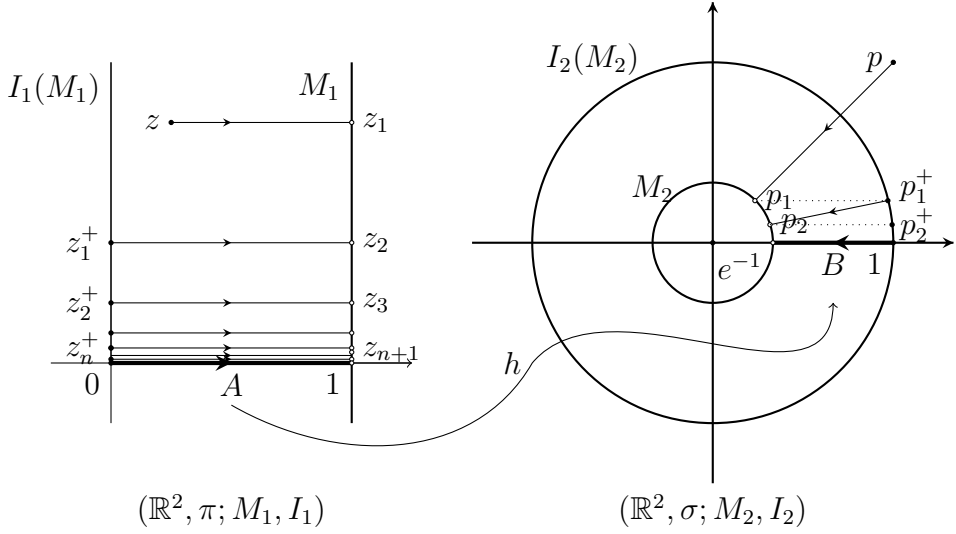
\begin{figure}[htp]\label{Figura2}\begin{center}
\begin{tikzpicture}[scale=0.8]

\draw[->] (0,0) -- (5,0); %eixo x
\draw (0,0) -- (0,5) node[anchor=north east]{$I_{1}(M_{1})$}; %eixo y
\draw[color=black,fill] (1,4) circle (1pt)node[left]{$z$}; %ponto pintado
\draw[thick](4,-1) -- (4,5)node[anchor=north east]{$M_{1}$}; %reta
\draw(-1,0) -- (5,0); %reta
\draw (0,-1) -- (0,5);
\draw[->-=0.333, black] (1,4)  -- (4,4); %reta
\draw[fill=white] (4,4) circle (1pt)node[right]{$z_1$};
\draw[->-=0.5,black](0,2) --  (4,2); %reta
\draw[fill=white] (4,2) circle (1pt)node[right]{$z_2$}; %ponto vazado
\draw[color=black,fill] (0,2) circle (1pt)node[left]{$z_1^{+}$}; %ponto vazado
\draw[->-=0.5,black](0,1) -- (4,1); %reta
\draw[fill=white] (4,1) circle (1pt)node[right]{$z_3$}; %ponto vazado
\draw[color=black,fill] (0,1) circle (1pt)node[left]{$z_2^{+}$}; %ponto vazado
\draw[->-=0.5,black] (0,0.5) -- (4,0.5); %ret
\draw[fill=white] (4,0.5) circle (1pt); %ponto vazado
\draw[color=black,fill] (0,0.5) circle (1pt); %ponto vazado
\draw[->-=0.5,black](0,0.25)  -- (4,0.25); %reta
\draw[fill=white] (4,0.25) circle (1pt); %ponto vazado
\draw[color=black,fill] (0,0.25) circle (1pt); %ponto vazado
\draw[->-=0.5,black](0,0.125) -- (4,0.125); %reta
\draw[fill=white] (4,0.18) circle (1pt)node[right]{$z_{n+1}$}; %ponto vazado
\draw[color=black,fill] (0,0.25) circle (1pt)node[ left]{$z_{n}^{+}$}; %ponto vazado %ponto vazado

\draw[->-=0.5,black] (0,0.0625) -- (4,0.0625); %reta
\draw[fill=white] (4,0.0625) circle (1pt);
\draw[color=black,fill] (0,0.0625) circle (1pt);

 \draw[->-=0.5, ultra thick] (0,0) -- (4,0);
\draw(2,0)node[below]{$A$};
\draw[color=black,fill] (0,0) circle (1pt)node[anchor=north east]{$0$}; %ponto vazado %ponto vazado
\draw[fill=white] (4,0) circle (1pt)node[anchor=north east]{$1$}; 
\draw[->-=1,thick] (10,-2) -- (10,6); %ret

\draw(2,-2)node[below]{$(\mathbb{R}^2,\pi;M_{1},I_{1})$};

\draw[->-=1, thick](6,2) -- (14,2); %ret

\draw[thick](10,2) circle [radius=1]; 
\draw(9,3.3)node[below]{$M_{2}$};

\draw[thick](10,2) circle [radius=3]; 
\draw(8,5.5)node[below]{$I_{2}(M_{2})$};

\draw[color=black,fill] (10,2) circle (1pt);
\draw[color=black,fill] (13,5) circle (1pt)node[left]{$p$}; 
\draw[->-=0.5,black] (13,5) -- (10.7,2.7); %ret
\draw[dotted]  (10.7,2.7) -- (12.9,2.7); %ret
\draw[fill=white] (10.7,2.7) circle (1pt)node[ right]{$p_1$};
\draw[color=black,fill] (12.91,2.7) circle (1pt); %ponto 
\draw(13.4, 3.4)node[below]{$p_1^+$};

\draw[->-=0.5,black] (12.9,2.7) -- (10.95,2.3); %ret
\draw[fill=white] (10.95,2.3) circle (1pt)node[ right]{$p_2$};
\draw[dotted] (10.95,2.3) -- (13.05,2.3); %ret
\draw[color=black,fill]  (12.98,2.3) circle (1pt); %ponto 
\draw(13.4, 2.75)node[below]{$p_2^+$};

 \draw[->-=0.5, ultra thick] (13,2) -- (11,2);
\draw(12,2)node[below]{$B$};
\draw(10,-2)node[below]{$(\mathbb{R}^2, \sigma;M_{2},I_{2})$};

\draw[->] (2,-0.7) to [out=330, in=240] (7,0)node[left]{$h$}  to [out=50, in=270] (12,1)
;  
\draw[fill=white] (11,2) circle (1pt)node[anchor=north east]{$e^{-1}$}; %ponto vazado %ponto vazado
\draw[color=black,fill] (13,2) circle (1pt)node[anchor=north east]{$1$}; 
\end{tikzpicture}\end{center}
 \caption{Impulsive trajectories of systems $(\mathbb{R}^2,\pi;M_{1},I_{1})$ and $(\mathbb{R}^2,\sigma;M_{2},I_{2})$.}\label{figexem1}
 \end{figure}

As presented in the Introduction, in the absence of impulses, there is not a function that maps orbits of $\pi$ to orbits of $\sigma$ homeomorphically and preserving orientation of the orbits.
However, in the presence of impulses, let us consider the subsets $A=[0,1)\times\{0\}$ and $B=(e^{-1},1]\times \{0\}$ of $\mathbb{R}^{2}$. Note that $A$ is positively $\tilde{\pi}$-invariant and $B$ is positively $\tilde{\sigma}$-invariant. Define the mapping $h: A \to B$ by 
\[ 
h(x_1, 0)=(e^{-x_1},0),\quad (x_1,x_2)\in A.
\] 
Now, note that
\[
h(\tilde{\pi}((x_1, x_2),t))=\tilde{\sigma}(h(x_1, x_2),t) \quad \text{for all }   \quad (x_1, x_2)\in A\;\; \text{and} \; \;t\in\mathbb{R}_{+}.
\]
Thus, $h$ maps orbits of $\tilde{\pi}$ to orbits of $\tilde{\sigma}$ homeomorphically and preserving orientation of the orbits. The mapping $h$ is called a topological conjugacy between the sets $A$ and $B$. The reader may consult \cite{BonottoFederson1} for more details.

Since $\tilde{\pi}((1,0), t) = \pi((1,0), t) \notin \overline{A}$ for all $t > 0$, we may not define a topological conjugacy between the sets $\overline{A}$ and $\overline{B}$.
However,  note that $\tilde{\pi}^+(I_1(1, 0)) \subset A$ and
\[
h(\tilde{\pi}(I_{1}(1, 0),t))=\tilde{\sigma}(h(I_1(1, 0)),t)\,\quad \text{for all }t\in\mathbb{R}_{+}.
\]
The continuous mapping $h$ satisfies the properties:\vspace{0.1cm}

$i)$ $h(\tilde{\pi}((x_1, x_2),t))=\tilde{\sigma}(h(x_1, x_2),t)$ for all $(x_1, x_2)\in \overline{A}\setminus M_{1}$ and $t\in\mathbb{R}_{+}$;

$ii)$ $h(\tilde{\pi}(I_{1}(x_1, x_2),t))=\tilde{\sigma}(h(I_1(x_1, x_2)),t)$ for all $(x_1, x_2) \in \overline{A} \cap M_1$ and $t\in\mathbb{R}_{+}$.\vspace{0.1cm}

We will call this kind of mapping as a \textit{weak topological conjugation}. The next definition characterizes this type of conjugacy.

\begin{definition}\label{D4.1} Let $(X,\pi; M_{X}, I_{X})$ and $(Y,\sigma; M_{Y}, I_{Y})$ be two ISSs. The sets $A\subset X$ and $B\subset Y$ are weakly topologically conjugate, if there is a mapping $h:A\to B$ satisfying the following conditions:
\begin{itemize} 
\item[$a)$] $h$ is a homeomorphism;
\item[$b)$] $h(\tilde{\pi}(x,t))=\tilde{\sigma}(h(x),t)$ for all $x\in A\setminus M_{X}$ and $t\geq 0$;
\item[$c)$] $h(\tilde{\pi}(I_{X}(x),t))=\tilde{\sigma}(h(I_{X}(x)),t)$ for all $x\in  A\cap M_{X}$ and $t\geq 0$.
\end{itemize}
The mapping $h:A\to B$ is called a
\emph{weak topological conjugation}.
\end{definition}

Next, we define a special class of subsets of sequences in $\mathbb{R}_{+}$ with respect to an arbitrary impulsive system $(Z, \varrho;M_{Z},I_{Z})$.

\begin{definition}
 Let $z,p\in Z$ and $A\subset Z$ be a nonempty subset. We define the following sets:
 \begin{enumerate}
 \item[$i)$] $\mathcal{L}_{z,p}=\{\{t_{n}\}_{n\in\N}\subset\mathbb{R}_{+}:\displaystyle\lim_{n\to+\infty}\tilde{\varrho}(z,t_{n})=p\}$;
 \item[$ii)$] $\mathcal{L}_{z,p}^{+\infty}=\{\{t_{n}\}_{n\in\mathbb{N}}\in \mathcal{L}_{z,p}: t_{n} \stackrel{n\to+\infty}{\longrightarrow} +\infty\}$;
\item[$iii)$] $\mathcal{L}_{z}(A)=\cup\{\mathcal{L}_{z,p}: p\in A\}$ \; and \;
 $\mathcal{L}_{z}^{+\infty}(A)=\cup\{\mathcal{L}_{z,p}^{+\infty}: p\in A\}$;
 \item[$iv)$] $\mathcal{L}_{z}=\mathcal{L}_{z}(Z)$ \; and \; $\mathcal{L}_{z}^{+\infty}=\mathcal{L}_{z}^{+\infty}(Z)$. 
\end{enumerate}
\end{definition}
  
  Note that $\mathcal{L}_{z,p}^{+\infty} \subset \mathcal{L}_{z,p}$, $\mathcal{L}_{z}(A) \subset \mathcal{L}_{z}$ and $\mathcal{L}_{z}^{+\infty}(A) \subset \mathcal{L}_{z}^{+\infty}$. If $p \in A$ then
\[
\mathcal{L}_{z,p}^{+\infty} \subset \mathcal{L}_{z,p}\subset \mathcal{L}_{z}(A) \subset \mathcal{L}_{z} \quad \text{and} \quad \mathcal{L}_{z,p}^{+\infty} \subset \mathcal{L}_{z}^{+\infty}(A) \subset \mathcal{L}_{z}^{+\infty}.
\]

\begin{definition}
A point $y\in Y$ is called comparable with $x\in X$ by the character of recurrence with respect to a set $A\subset X$, or simply, comparable with $x$ with respect to a set $A$, if 
$\mathcal{L}_{x}^{+\infty}(A)\subset \mathcal{L}_{y}^{+\infty}$. \end{definition}

\begin{remark}\label{R4.5} Let $A\subset X$, $B\subset Y$ and $h:A\to B$ be a weak topological conjugation. Assume that for some $y\in B$ there is $x\in A\setminus M_{X}$ such that $h(x)=y$. Thus, we have $\mathcal{L}_{x}^{+\infty}(A)\subset \mathcal{L}_{y}^{+\infty}$. Therefore, $y$ is  comparable with $x$ with respect to the set $A$.
\end{remark}

It is reasonable to ask whether the converse of Remark \ref{R4.5} holds, that is, if the concept of comparability
implies in the construction of a weak topological conjugation. We shall establish sufficient conditions to show this result.
In order to do that, we first exhibit some auxiliary lemmas.

\begin{lemma}\label{L1.1}
Let $x,q\in X$ and $y\in Y$ be such that $\mathcal{L}_{x,q}^{+\infty}\subset \mathcal{L}_{y}^{+\infty}$. If $\mathcal{L}_{x,q}^{+\infty}\neq \emptyset$,
then there exists a unique point $p\in \tilde{L}_{Y}^{+}(y)$ such that $\mathcal{L}_{x,q}^{+\infty}\subset \mathcal{L}_{y,p}^{+\infty}$.
\end{lemma}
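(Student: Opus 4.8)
The plan is to exploit the defining inclusion $\mathcal{L}_{x,q}^{+\infty}\subset \mathcal{L}_{y}^{+\infty}$ together with an interlacing argument to pin down a single limit point in $Y$. First I would fix an arbitrary sequence $\{t_n\}_{n\in\N}\in\mathcal{L}_{x,q}^{+\infty}$, which exists since this set is nonempty by hypothesis. By assumption it also belongs to $\mathcal{L}_{y}^{+\infty}=\bigcup_{p\in Y}\mathcal{L}_{y,p}^{+\infty}$, so there is some $p\in Y$ with $\tilde{\sigma}(y,t_n)\to p$ and $t_n\to+\infty$. Since $t_n\to+\infty$, the convergence $\tilde{\sigma}(y,t_n)\to p$ places $p$ in the positive limit set $\tilde{L}_{Y}^{+}(y)$ directly from its definition. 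This produces a candidate point $p$.

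The main step is to show that this $p$ does not depend on the chosen sequence, which simultaneously yields the inclusion $\mathcal{L}_{x,q}^{+\infty}\subset \mathcal{L}_{y,p}^{+\infty}$ and will force uniqueness. Given two sequences $\{t_n\}_{n\in\N},\{s_n\}_{n\in\N}\in\mathcal{L}_{x,q}^{+\infty}$ with associated limits $p_1$ and $p_2$, I would interlace them into $\{u_n\}_{n\in\N}=\{t_1,s_1,t_2,s_2,\ldots\}$. Because both $t_n\to+\infty$ and $s_n\to+\infty$, we have $u_n\to+\infty$; and because both $\tilde{\pi}(x,t_n)\to q$ and $\tilde{\pi}(x,s_n)\to q$, the interlaced sequence also satisfies $\tilde{\pi}(x,u_n)\to q$. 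Hence $\{u_n\}_{n\in\N}\in\mathcal{L}_{x,q}^{+\infty}\subset\mathcal{L}_{y}^{+\infty}$, so $\tilde{\sigma}(y,u_n)$ converges to a single point of $Y$. But the subsequences $\tilde{\sigma}(y,t_n)\to p_1$ and $\tilde{\sigma}(y,s_n)\to p_2$ must then share this common limit, giving $p_1=p_2$ by uniqueness of limits in the metric space $Y$. Consequently every sequence in $\mathcal{L}_{x,q}^{+\infty}$ sends $y$ to the same point $p$, that is, $\mathcal{L}_{x,q}^{+\infty}\subset\mathcal{L}_{y,p}^{+\infty}$, and we already know $p\in\tilde{L}_{Y}^{+}(y)$.

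Finally, the uniqueness of $p$ as a point satisfying $\mathcal{L}_{x,q}^{+\infty}\subset\mathcal{L}_{y,p}^{+\infty}$ follows immediately: if $p'\in\tilde{L}_{Y}^{+}(y)$ were another such point, then choosing any $\{t_n\}_{n\in\N}\in\mathcal{L}_{x,q}^{+\infty}$ (nonempty) we would have both $\tilde{\sigma}(y,t_n)\to p$ and $\tilde{\sigma}(y,t_n)\to p'$, whence $p=p'$ again by uniqueness of limits. The only genuinely substantive ingredient is the interlacing construction; everything else reduces to the definitions of the sets $\mathcal{L}_{z,p}^{+\infty}$ and $\tilde{L}_{Y}^{+}(\cdot)$ and to uniqueness of limits, so I do not anticipate a serious obstacle. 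I note that the nonemptiness hypothesis $\mathcal{L}_{x,q}^{+\infty}\neq\emptyset$ is used precisely to guarantee that the candidate $p$ exists and to close the uniqueness argument.
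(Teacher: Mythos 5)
Your proposal is correct and follows essentially the same route as the paper: both hinge on interlacing two sequences from $\mathcal{L}_{x,q}^{+\infty}$ and using that the interlaced sequence still lies in $\mathcal{L}_{x,q}^{+\infty}\subset\mathcal{L}_{y}^{+\infty}$, forcing the two limits along $\tilde{\sigma}(y,\cdot)$ to coincide. The only difference is cosmetic: the paper phrases this as a proof by contradiction, while you argue directly that any two sequences yield the same limit point.
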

\begin{proof}
Let $\{t_{n}\}_{n\in\N}\in \mathcal{L}_{x,q}^{+\infty}$. By hypothesis we have  $\{t_{n}\}_{n\in\N}\in  \mathcal{L}_{y}^{+\infty}$, that is,
there is a unique point $p\in Y$ such that $$p=\lim_{n\to+\infty}\tilde{\sigma}(y,t_{n}).$$
Consequently, $p\in \tilde{L}_{Y}^{+}(y)$ as $t_{n} \stackrel{n\to+\infty}{\longrightarrow} +\infty$.

Now, suppose to the contrary that there is a sequence $\{s_{n}\}_{n\in \N}\in \mathcal{L}_{x,q}^{+\infty}\setminus \mathcal{L}_{y,p}^{+\infty}$.  
Using the hypothesis, we get $\{s_{n}\}_{n\in\N}\in \mathcal{L}_{y}^{+\infty}$. Thus, there is $\overline{p}\in Y$ $(\overline{p}\neq p)$ such that
 $\overline{p}=\displaystyle\lim_{n\to+\infty}\tilde{\sigma}(y,s_{n})$. 
Define the sequence $\{\overline{t}_n\}_{n\in\mathbb{N}}$ by
\[
\overline{t}_{n}=\left \{\begin{array}{lcl} t_{n},& & n=2k, \quad k=1,2,\ldots,\vspace{0.5mm}\\
s_{n},& & n=2k+1, \quad k = 0,1,2,\ldots\vspace{0.5mm}.\\
\end{array}\right.
\]
By the construction, we obtain $\{\overline{t}_{n}\}_{n\in\N}\in \mathcal{L}_{x,q}^{+\infty}$ and $\{\overline{t}_{n}\}_{n\in\N}\not\in \mathcal{L}_{y}^{+\infty}$ which is a contradiction. Therefore, $\mathcal{L}_{x,q}^{+\infty}\subset \mathcal{L}_{y,p}^{+\infty}$.
\end{proof}

\begin{remark}\label{R0.1} Let $x\in X$ and $A\subset X$. The condition $\tilde{L}_{X}^{+}(x)\cap A\neq \emptyset$ is equivalent to $\mathcal{L}_{x}^{+\infty}(\tilde{L}^{+}_{X}(x)\cap A)\neq \emptyset$.
\end{remark}

\begin{lemma}\label{R1.1} Assume that $y\in Y$ is comparable with $x\in X$ with respect to the nonempty set $\tilde{L}^{+}_{X}(x)\cap A$. Then there is a continuous
 mapping $h:\tilde{L}^{+}_{X}(x)\cap A\to  \tilde{L}_{Y}^{+}(y)$ such that $\mathcal{L}_{x,q}^{+\infty}\subset \mathcal{L}_{y, h(q)}^{+\infty}$ for all $q \in \tilde{L}^{+}_{X}(x)\cap A$. Moreover, if $\{s_{n}\}_{n\in\N}\in \mathcal{L}_{x,q}^{+\infty}$ for $q \in \tilde{L}^{+}_{X}(x)\cap A$ then 
\begin{equation}\label{hfunction}
h(q)=\lim_{n\to+\infty}\tilde{\sigma}(y,s_{n}).
\end{equation}
\end{lemma}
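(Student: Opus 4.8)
The plan is to define $h$ pointwise through Lemma \ref{L1.1} and then prove its continuity by a diagonal argument. First I would fix $q\in \tilde{L}^{+}_{X}(x)\cap A$. Since $q\in \tilde{L}^{+}_{X}(x)$, the definition of the positive limit set supplies a sequence $\{\lambda_{n}\}_{n\in\N}$ with $\lambda_{n}\to+\infty$ and $\tilde{\pi}(x,\lambda_{n})\to q$, so $\mathcal{L}_{x,q}^{+\infty}\neq\emptyset$. Comparability of $y$ with $x$ with respect to $\tilde{L}^{+}_{X}(x)\cap A$ means $\mathcal{L}_{x}^{+\infty}(\tilde{L}^{+}_{X}(x)\cap A)\subset \mathcal{L}_{y}^{+\infty}$; since $q$ lies in this set, the containments noted after the definition of $\mathcal{L}_{z}^{+\infty}(A)$ give $\mathcal{L}_{x,q}^{+\infty}\subset \mathcal{L}_{x}^{+\infty}(\tilde{L}^{+}_{X}(x)\cap A)\subset \mathcal{L}_{y}^{+\infty}$. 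Lemma \ref{L1.1} then yields a unique $p\in \tilde{L}^{+}_{Y}(y)$ with $\mathcal{L}_{x,q}^{+\infty}\subset \mathcal{L}_{y,p}^{+\infty}$, and I set $h(q):=p$. Uniqueness makes $h$ well defined, its image lies in $\tilde{L}^{+}_{Y}(y)$, and the stated inclusion $\mathcal{L}_{x,q}^{+\infty}\subset \mathcal{L}_{y,h(q)}^{+\infty}$ holds by construction. The \emph{moreover} part is then immediate: if $\{s_{n}\}_{n\in\N}\in \mathcal{L}_{x,q}^{+\infty}\subset \mathcal{L}_{y,h(q)}^{+\infty}$, then $\tilde{\sigma}(y,s_{n})\to h(q)$ by the very definition of $\mathcal{L}_{y,h(q)}^{+\infty}$, which is exactly \eqref{hfunction}.

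The hard part will be the continuity of $h$, which I would establish sequentially. Let $q_{k}\to q$ in $\tilde{L}^{+}_{X}(x)\cap A$; the goal is to manufacture a single sequence of times $\{\tau_{k}\}_{k\in\N}\in \mathcal{L}_{x,q}^{+\infty}$ along which both $\tilde{\pi}(x,\cdot)$ approaches $q$ and $\tilde{\sigma}(y,\cdot)$ stays close to the images $h(q_{k})$. For each $k$ I choose $\{t^{(k)}_{n}\}_{n}\in \mathcal{L}_{x,q_{k}}^{+\infty}$; by the inclusion established in the first paragraph this same sequence lies in $\mathcal{L}_{y,h(q_{k})}^{+\infty}$, so $\tilde{\pi}(x,t^{(k)}_{n})\to q_{k}$ and $\tilde{\sigma}(y,t^{(k)}_{n})\to h(q_{k})$ as $n\to+\infty$. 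Hence I can pick $n_{k}$ so large that $\tau_{k}:=t^{(k)}_{n_{k}}$ satisfies simultaneously $\tau_{k}>k$, $\rho_{X}(\tilde{\pi}(x,\tau_{k}),q_{k})<\frac{1}{k}$ and $\rho_{Y}(\tilde{\sigma}(y,\tau_{k}),h(q_{k}))<\frac{1}{k}$.

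It remains to read off the conclusion. From $\tau_{k}>k$ we get $\tau_{k}\to+\infty$, and from $\rho_{X}(\tilde{\pi}(x,\tau_{k}),q)\le \frac{1}{k}+\rho_{X}(q_{k},q)\to 0$ we get $\tilde{\pi}(x,\tau_{k})\to q$, so $\{\tau_{k}\}_{k\in\N}\in \mathcal{L}_{x,q}^{+\infty}$. The inclusion $\mathcal{L}_{x,q}^{+\infty}\subset \mathcal{L}_{y,h(q)}^{+\infty}$ then forces $\tilde{\sigma}(y,\tau_{k})\to h(q)$, and combining this with $\rho_{Y}(\tilde{\sigma}(y,\tau_{k}),h(q_{k}))<\frac{1}{k}$ through the triangle inequality gives $h(q_{k})\to h(q)$. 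This proves continuity and finishes the lemma. I expect the only genuine subtlety to be the simultaneous diagonal choice of $n_{k}$, with all three estimates holding for a common large index, while everything else is bookkeeping built on Lemma \ref{L1.1}; note in particular that no appeal to the perturbation Lemmas \ref{lema1}--\ref{lema3} is needed, since the argument manipulates only sequences of times.
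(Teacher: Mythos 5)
Your proposal is correct and follows essentially the same route as the paper: define $h$ pointwise via Lemma \ref{L1.1} (using comparability to verify its hypotheses at each $q$), read off \eqref{hfunction} from the inclusion $\mathcal{L}_{x,q}^{+\infty}\subset \mathcal{L}_{y,h(q)}^{+\infty}$, and prove continuity by a diagonal selection of times along a sequence $q_k \to q$. Your explicit requirement $\tau_{k}>k$ on the selected \emph{times} is in fact a slight refinement of the paper's write-up, which only imposes $n_{k}>k$ on the \emph{indices} — a condition that by itself does not guarantee $s_{n_{k}}^{k}\to+\infty$, which is needed for $\{s_{k}'\}_{k\in\N}$ to lie in $\mathcal{L}_{x,q}^{+\infty}$.
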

\begin{proof} By hypothesis and Remark \ref{R0.1}, we have $\emptyset \neq \mathcal{L}_{x}^{+\infty}(\tilde{L}^{+}_{X}(x)\cap A)\subset \mathcal{L}_{y}^{+\infty}$. For each point $q\in \tilde{L}^{+}_{X}(x)\cap A$ there is a unique point $p\in \tilde{L}_{Y}^{+}(y)$ such that $\mathcal{L}_{x,q}^{+\infty}\subset \mathcal{L}_{y,p}^{+\infty}$, see  \lemaref{L1.1}. Hence, we may define a mapping $h:\tilde{L}^{+}_{X}(x)\cap A\to  \tilde{L}_{Y}^{+}(y)$ by $h(q)=p$. Consequently, $\mathcal{L}_{x,q}^{+\infty}\subset \mathcal{L}_{y, h(q)}^{+\infty}$ for all $q \in \tilde{L}^{+}_{X}(x)\cap A$. In addition, if $\{s_{n}\}_{n\in\N}\in \mathcal{L}_{x,q}^{+\infty}$ then $\{s_{n}\}_{n\in\N}\in  \mathcal{L}_{y, h(q)}^{+\infty}$, that is, the condition \eqref{hfunction} holds.

In order to show the continuity of $h$, let $q\in \tilde{L}^{+}_{X}(x)\cap A$ and $\{q_{k}\}_{k\in\N}\subset\tilde{L}^{+}_{X}(x)\cap A$ be a sequence such that  $q_{k}\stackrel{k\to+\infty}{\longrightarrow} q$. For each $k\in\mathbb{N}$, there is a sequence $\{s_{n}^{k}\}_{n\in \N}\in \mathcal{L}_{x,q_{k}}^{+\infty}$ since $\{q_{k}\}_{k\in\N}\subset\tilde{L}^{+}_{X}(x)$. 
Now, since $\mathcal{L}_{x,q_k}^{+\infty}\subset \mathcal{L}_{y, h(q_k)}^{+\infty}$ and we have \eqref{hfunction}, we obtain 
\[
h(q_{k})=\displaystyle\lim_{n\to+\infty}\tilde{\sigma}(y,s_{n}^{k}).
\]
Take $p \in  \tilde{L}_{Y}^{+}(y)$ such that $h(q) = p$ and set $p_{k}=h(q_{k})$ for all $k\in\mathbb{N}$.  
Let $\{\alpha_{k}\}_{k\in\N}\subset\R_{+}$ be a sequence such that $\alpha_{k} \stackrel{k\to+\infty}{\longrightarrow} 0$. Then there is $n_{k}\in\mathbb{N}$, $n_{k}> k$, such that 
   $$\rho_{Y}(\tilde{\sigma}(y,s_{n_{k}}^{k}),p_{k})<\alpha_{k} \quad \mbox{and} \quad \rho_{X}(\tilde{\pi}(x,s_{n_{k}}^{k}),q_{k})<\alpha_{k},$$ for all $k\in\mathbb{N}$.
Denote $s_{k}'=s_{n_{k}}^{k}$, $k\in\mathbb{N}$. Thus $\{s_{k}'\}_{k\in\N}\in \mathcal{L}_{x,q}^{+\infty}$ as
$$\rho_{X}(\tilde{\pi}(x,s_{k}'),q)\leq \rho_{X}(\tilde{\pi}(x,s_{k}'),q_{k})+\rho_{X}(q_{k},q)\stackrel{k \rightarrow +\infty}{\longrightarrow} 0.$$
According to the inclusion $\mathcal{L}_{x,q}^{+\infty}\subset \mathcal{L}_{y,p}^{+\infty}$ and by the fact that $\{s_{k}'\}_{k\in\N}\in \mathcal{L}_{x,q}^{+\infty}$, we conclude that
  $$\rho_{Y}(p_{k},p)\leq \rho_{Y}(p_{k},\tilde{\sigma}(y,s_{k}'))+ \rho_{Y}(\tilde{\sigma}(y,s_{k}'),p)<\alpha_{k}+\rho_{Y}(\tilde{\sigma}(y,s_{k}'),p)\stackrel{k \rightarrow +\infty}{\longrightarrow} 0.$$
Hence, $h$ is continuous in $\tilde{L}^{+}_{X}(x)\cap A$.
\end{proof}

\begin{theorem}\label{T2.1} 
Let $x\in X$, $y\in Y$, $A\subset X$ be such that $A\setminus M_{X}$ is positively $\tilde{\pi}$-invariant and assume that $\tilde{L}^{+}_{X}(x)\cap A\neq \emptyset$.
If $y$ is comparable with $x$ with respect to the set $\tilde{L}^{+}_{X}(x)\cap A$, then there is a continuous mapping $h:\tilde{L}^{+}_{X}(x)\cap A\to \tilde{L}^{+}_{Y}(y)$ such that $\mathcal{L}_{x,q}^{+\infty}\subset \mathcal{L}_{y, h(q)}^{+\infty}$ for all $q \in \tilde{L}^{+}_{X}(x)\cap A$ and
\begin{equation}\label{eqt1}
h(\tilde{\pi}(q,t))=\tilde{\sigma} (h(q),t)
\end{equation}
for all $q\in (\tilde{L}^{+}_{X}(x)\cap A)\setminus M_{X}$ and $t\in \mathbb{R}_{+}$. Moreover, if $\tilde{L}^{+}_{X}(x)\cap A$ is $I_{X}$-invariant then
\begin{equation}\label{eqt2}
h(\tilde{\pi}(I_{X}(q),t))=\tilde{\sigma} (h(I_{X}(q)),t)
\end{equation}
for all $q \in \tilde{L}^{+}_{X}(x)\cap A \cap M_X$ and $t\in \mathbb{R}_{+}$.
\end{theorem}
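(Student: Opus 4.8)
The existence of the continuous map $h:\tilde{L}^{+}_{X}(x)\cap A\to\tilde{L}_{Y}^{+}(y)$ together with the inclusion $\mathcal{L}_{x,q}^{+\infty}\subset\mathcal{L}_{y,h(q)}^{+\infty}$ and formula \eqref{hfunction} is exactly the content of \lemaref{R1.1}, whose hypotheses are met here: by \obsref{R0.1} the condition $\tilde{L}^{+}_{X}(x)\cap A\neq\emptyset$ is equivalent to $\mathcal{L}_{x}^{+\infty}(\tilde{L}^{+}_{X}(x)\cap A)\neq\emptyset$, and comparability gives the inclusion of this set into $\mathcal{L}_{y}^{+\infty}$. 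Hence only the intertwining relations \eqref{eqt1} and \eqref{eqt2} require proof. The plan is to first record that the domain of $h$ is stable under the flow off $M_{X}$: if $q\in(\tilde{L}^{+}_{X}(x)\cap A)\setminus M_{X}$ then, since $\tilde{L}^{+}_{X}(x)\setminus M_{X}$ is positively $\tilde{\pi}$-invariant by \cite[Proposition 4.3]{Manuel} and $A\setminus M_{X}$ is positively $\tilde{\pi}$-invariant by hypothesis, we obtain $\tilde{\pi}(q,t)\in\tilde{L}^{+}_{X}(x)\cap A$ for every $t\geq 0$, so that $h(\tilde{\pi}(q,t))$ is well defined.

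To prove \eqref{eqt1}, I would fix such a $q$ and choose $\{s_{n}\}_{n\in\N}\in\mathcal{L}_{x,q}^{+\infty}$ (this set is nonempty because $q\in\tilde{L}^{+}_{X}(x)$); thus $\tilde{\pi}(x,s_{n})\to q$, $s_{n}\to+\infty$, and, by \eqref{hfunction}, $h(q)=\lim_{n\to+\infty}\tilde{\sigma}(y,s_{n})$. The idea is to transport the identity through the shifted sequence $\{s_{n}+t\}$. Assume for the moment that $t$ avoids the (at most countable) set of impulse times $\{t_{k}(q)\}_{k}\cup\{t_{k}(h(q))\}_{k}$ and that $h(q)\notin M_{Y}$. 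Applying \lemaref{lema3} in $X$ with $x_{n}=\tilde{\pi}(x,s_{n})\to q\in X\setminus M_{X}$ and $\lambda_{n}=t$ gives $\tilde{\pi}(x,s_{n}+t)=\tilde{\pi}(\tilde{\pi}(x,s_{n}),t)\to\tilde{\pi}(q,t)$, so $\{s_{n}+t\}\in\mathcal{L}_{x,\tilde{\pi}(q,t)}^{+\infty}\subset\mathcal{L}_{y,h(\tilde{\pi}(q,t))}^{+\infty}$ and therefore $\tilde{\sigma}(y,s_{n}+t)\to h(\tilde{\pi}(q,t))$. Applying \lemaref{lema3} in $Y$ with $\tilde{\sigma}(y,s_{n})\to h(q)\in Y\setminus M_{Y}$ and the same $\lambda_{n}=t$ gives $\tilde{\sigma}(y,s_{n}+t)=\tilde{\sigma}(\tilde{\sigma}(y,s_{n}),t)\to\tilde{\sigma}(h(q),t)$. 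Uniqueness of the limit then yields $h(\tilde{\pi}(q,t))=\tilde{\sigma}(h(q),t)$ for every such $t$.

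To remove the restriction on $t$, I would pass to the limit from the right: given an arbitrary $t_{0}\geq 0$, pick $t_{j}\downarrow t_{0}$ with each $t_{j}$ in the co-countable good set. Since the impulsive trajectories $s\mapsto\tilde{\pi}(q,s)$ and $s\mapsto\tilde{\sigma}(h(q),s)$ are right-continuous and $h$ is continuous (with $\tilde{\pi}(q,t_{j})$ in the domain by the invariance above), letting $j\to+\infty$ in $h(\tilde{\pi}(q,t_{j}))=\tilde{\sigma}(h(q),t_{j})$ gives \eqref{eqt1} at $t_{0}$. Finally, \eqref{eqt2} follows directly from \eqref{eqt1}: if $q\in\tilde{L}^{+}_{X}(x)\cap A\cap M_{X}$, then (H2) gives $I_{X}(q)\notin M_{X}$, while $I_{X}$-invariance of $\tilde{L}^{+}_{X}(x)\cap A$ places $I_{X}(q)$ in $\tilde{L}^{+}_{X}(x)\cap A$; hence $I_{X}(q)\in(\tilde{L}^{+}_{X}(x)\cap A)\setminus M_{X}$ and \eqref{eqt1} applied to the point $I_{X}(q)$ is precisely \eqref{eqt2}.

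The delicate step, and the one I expect to be the main obstacle, is the convergence on the $Y$-side, namely commuting $\lim_{n}$ with $\tilde{\sigma}(\cdot,t)$; this is licit only when $h(q)\notin M_{Y}$, since all of the available convergence lemmas (\lemaref{lema1}, \lemaref{lema2}, \lemaref{lema3}) break down exactly at points of the impulsive set, where the flow is discontinuous. Thus the argument above is complete only after establishing that $h$ carries $(\tilde{L}^{+}_{X}(x)\cap A)\setminus M_{X}$ into $Y\setminus M_{Y}$. I would try to extract this from the continuity of $h$ together with the positive $\tilde{\sigma}$-invariance of $\tilde{L}^{+}_{Y}(y)\setminus M_{Y}$, treating separately, or ruling out, the possibility that the limit $h(q)=\lim_{n}\tilde{\sigma}(y,s_{n})$ lands on $M_{Y}$.
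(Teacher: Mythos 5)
Your reduction to \lemaref{R1.1}, the invariance argument showing that $h(\tilde{\pi}(q,t))$ is well defined, your treatment of \eqref{eqt2}, and your argument for \eqref{eqt1} in the case $h(q)\notin M_{Y}$ are all sound. (Your route through \lemaref{lema3} for $t$ avoiding the impulse times, followed by right-continuity in $t$, differs slightly from the paper, which instead uses \lemaref{lema1} and \lemaref{lema2} to produce a single perturbation $\alpha_{n}\to 0$ with $\tilde{\pi}(x,s_{n}+t+\alpha_{n})\to\tilde{\pi}(q,t)$ and $\tilde{\sigma}(y,s_{n}+t+\alpha_{n})\to\tilde{\sigma}(h(q),t)$, handling all $t\geq 0$ at once; both routes work.) The genuine gap is exactly where you suspect it: the case $h(q)\in M_{Y}$. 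Your proposed repair --- proving that $h$ maps $(\tilde{L}^{+}_{X}(x)\cap A)\setminus M_{X}$ into $Y\setminus M_{Y}$ --- cannot work in general: limit sets of impulsive systems routinely meet the impulsive set, positive $\tilde{\sigma}$-invariance of $\tilde{L}^{+}_{Y}(y)\setminus M_{Y}$ says nothing about points of $\tilde{L}^{+}_{Y}(y)\cap M_{Y}$, and continuity of $h$ provides no obstruction either. So the conclusion must be proved also at points $q$ with $h(q)\in M_{Y}$, and your argument as written cannot reach them, since \lemaref{lema1}, \lemaref{lema2} and \lemaref{lema3} all require the limit point to lie off the impulsive set.

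The paper resolves this with a tube argument, which is the real work of the proof. Since $h(q)\in M_{Y}$ satisfies STC by (H1), take an STC-tube $F(L_{h(q)},[0,2\lambda])$ through $h(q)$, a ball $B_{Y}(h(q),\eta)$ inside it, and split it into the incoming part $H_{1}=F(L_{h(q)},(\lambda,2\lambda])\cap B_{Y}(h(q),\eta)$ and the outgoing part $H_{2}=F(L_{h(q)},[0,\lambda])\cap B_{Y}(h(q),\eta)$. If a subsequence $\{\tilde{\sigma}(y,s_{n_{k}})\}_{k\in\N}$ lay in $H_{1}$, then $\phi_{Y}(\tilde{\sigma}(y,s_{n_{k}}))\to 0$, so continuity of $\sigma$ and $I_{Y}$ gives $\tilde{\sigma}(y,s_{n_{k}}+\phi_{Y}(\tilde{\sigma}(y,s_{n_{k}})))\to I_{Y}(h(q))$, while on the $X$ side \lemaref{lema1} gives $\tilde{\pi}(x,s_{n_{k}}+\phi_{Y}(\tilde{\sigma}(y,s_{n_{k}})))\to q$; hence $\{s_{n_{k}}+\phi_{Y}(\tilde{\sigma}(y,s_{n_{k}}))\}_{k\in\N}\in\mathcal{L}_{x,q}^{+\infty}\subset\mathcal{L}_{y,h(q)}^{+\infty}$, forcing $h(q)=I_{Y}(h(q))$, which contradicts (H2). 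Therefore $\{\tilde{\sigma}(y,s_{n})\}_{n\in\N}$ eventually lies in $H_{2}$, i.e., the approach to $h(q)$ is only from the outgoing side of the tube, where the motion issuing from $h(q)$ suffers no immediate jump and the Case-1 computation goes through. This step (or something equivalent) must be added; without it the proof is incomplete.
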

\begin{proof}  By Lemma \ref{R1.1}, there is a continuous mapping $h:\tilde{L}^{+}_{X}(x)\cap A\to  \tilde{L}_{Y}^{+}(y)$ satisfying the condition $\mathcal{L}_{x,q}^{+\infty}\subset \mathcal{L}_{y, h(q)}^{+\infty}$ for all $q \in \tilde{L}^{+}_{X}(x)\cap A$. Further, if $\{s_{n}\}_{n\in\N}\in \mathcal{L}_{x,q}^{+\infty}$ for $q \in \tilde{L}^{+}_{X}(x)\cap A$ then 
\[
h(q)=\lim_{n\to+\infty}\tilde{\sigma}(y,s_{n}).
\]

Let us prove that equality \eqref{eqt1} holds. Let $q\in (\tilde{L}^{+}_{X}(x)\cap A)\setminus M_{X}.$ Then 
\[
\tilde{\pi}(q,t)\in (\tilde{L}^{+}_{X}(x)\cap A)\setminus M_{X}
\]
for all $t\geq 0$ as $(\tilde{L}^{+}_{X}(x)\cap A)\setminus M_{X}$ is positively $\tilde{\pi}$-invariant. Take $\{s_{n}\}_{n\in \N}\in \mathcal{L}_{x,q}^{+\infty}$ and recall that $\mathcal{L}_{x,q}^{+\infty}\subset \mathcal{L}_{y, h(q)}^{+\infty}$. 

\vspace{.2cm}
\textbf{Case 1:} $h(q)\not\in M_{Y}$.

Since $q\not\in M_{X}$ and $h(q)\not\in M_{Y}$, it follows by \lemaref{lema1} and \lemaref{lema2} that there is a sequence
$\{\alpha_{n}\}_{n\in\N}\subset \R_{+}$ such that $\alpha_{n} \stackrel{n\to+\infty}{\longrightarrow} 0$,  
\begin{equation}\label{eq3.3T3.5}
\tilde{\pi}(x,s_{n}+t+\alpha_{n})  \stackrel{n\to+\infty}{\longrightarrow} \tilde{\pi}(q,t)
\end{equation}
and
\begin{equation}\label{eq3.4T3.5}
\tilde{\sigma}(y,s_{n}+t+\alpha_{n})  \stackrel{n\to+\infty}{\longrightarrow} \tilde{\sigma}(h(q),t).
\end{equation}

On the other hand, using \eqref{eq3.3T3.5} and the definition of $h$, we have 
\begin{equation}\label{eq4.10a}
\{s_{n}+t+\alpha_{n}\}_{n\in\N}\in \mathcal{L}_{x,\tilde{\pi}(q,t)}^{+\infty}\subset \mathcal{L}_{y,h(\tilde{\pi}(q,t))}^{+\infty}.
\end{equation}
Therefore, by \eqref{eq3.4T3.5} and \eqref{eq4.10a} we may conclude that
\[ 
h( \tilde{\pi}(q,t))=\lim_{n\to+\infty}\tilde{\sigma}(y,s_{n}+t+\alpha_{n})=\tilde{\sigma}(h(q),t).
\]

\vspace{.2cm}
\textbf{Case 2:}  $h(q)\in M_{Y}$.

Since $M_{Y}$ satisfies the condition STC, there is a STC-tube $F(L_{h(q)},[0,2\lambda])$ through
$h(q)$ given by a section $S_{h(q)}$. Moreover, since the tube is a
neighborhood of $h(q)$, there is $\eta>0$ such that
$$B_{Y}(h(q),\eta)\subset F(L_{h(q)},[0,2\lambda]).$$
Denote $H_1=F(L_{h(q)},(\lambda,2\lambda])\cap B_{Y}(h(q),\eta)$ and $H_2=F(L_{h(q)},[0,\lambda])\cap B_{Y}(h(q),\eta)$, where $B_{Y}(h(q),\eta)=\{y\in Y: \rho_{Y}(h(q),y)<\eta\}.$

We claim that $\{\tilde{\sigma}(y,s_{n})\}_{n\in\N}$ does not admit any subsequence in $H_{1}$.  In fact, let $\{s_{n_k}\}_{k\in\mathbb{N}}$ be a subsequence of $\{s_n\}_{n\in\mathbb{N}}$ and suppose that $\{\tilde{\sigma}(y,s_{n_k})\}_{k\in\N} \subset H_1$. Since $\{s_{n_{k}}\}_{k\in \N}\in \mathcal{L}_{x,q}^{+\infty}\subset\mathcal{L}_{y,h(q)}^{+\infty}$ we get
$\phi_{Y}(\tilde{\sigma}(y,s_{n_{k}}))  \stackrel{k\to+\infty}{\longrightarrow} 0$. Now, using the continuity of $I_{Y}$ and $\sigma$, we obtain

$$\tilde{\sigma}(y,s_{n_{k}}+\phi
_{Y}(\tilde{\sigma}(y,s_{n_{k}})))  \stackrel{k\to+\infty}{\longrightarrow} I_{Y}(h(q)).$$

Also, since $\{s_{n_{k}}\}_{k\in \N}\in \mathcal{L}_{x,q}^{+\infty}$, $q\not\in M_{X}$ and we have \lemaref{lema1}, 
\[
\tilde{\pi}(x,s_{n_{k}}+\phi
_{Y}(\tilde{\sigma}(y,s_{n_{k}})))  \stackrel{k\to+\infty}{\longrightarrow} q.
\]
Consequently, $\{s_{n_{k}}+\phi
_{Y}(\tilde{\sigma}(y,s_{n_{k}}))\}_{k\in\N}\in \mathcal{L}_{x,q}^{+\infty}\subset \mathcal{L}_{y,h(q)}^{+\infty}$. Therefore,
\[ h( q)=\lim_{k\to+\infty}\tilde{\sigma}(y,s_{n_{k}}+\phi
_{Y}(\tilde{\sigma}(y,s_{n_{k}})))=I_{Y}(h(q))
\]
which contradicts the condition (H2).

Hence, we may assume that $\{\tilde{\sigma}(y,s_{n})\}_{n\in\N}\subset  H_{2}$ and we obtain the equality \eqref{eqt1} using a similar proof of Case 1.

In order to prove that condition \eqref{eqt2} holds, it is enough to note that $I_{X}(q) \in (\tilde{L}^{+}_{X}(x) \cap A)\setminus M_X$ for all $q \in \tilde{L}^{+}_{X}(x)\cap A \cap M_X$ and apply condition \eqref{eqt1}.
\end{proof}

\begin{corollary}\label{CT2.1} 
Let $x\in X$, $y\in Y$ and assume that $\tilde{L}^{+}_{X}(x)\neq \emptyset$.
If $y$ is comparable with $x$ with respect to the set $\tilde{L}^{+}_{X}(x)$, then there is a continuous 
mapping $h:\tilde{L}^{+}_{X}(x)\to \tilde{L}^{+}_{Y}(y)$ such that $\mathcal{L}_{x,q}^{+\infty}\subset \mathcal{L}_{y, h(q)}^{+\infty}$ for all $q \in \tilde{L}^{+}_{X}(x)$  and
\[
h(\tilde{\pi}(q,t))=\tilde{\sigma} (h(q),t)
\]
for all $q\in \tilde{L}^{+}_{X}(x)\setminus M_{X}$ and $t\in \mathbb{R}_{+}$. 
\end{corollary}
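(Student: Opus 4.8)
The plan is to deduce this statement directly from \teoref{T2.1} by specializing to the case $A = X$. With this choice the set $\tilde{L}^{+}_{X}(x)\cap A$ reduces to $\tilde{L}^{+}_{X}(x)$, so that the target mapping $h:\tilde{L}^{+}_{X}(x)\cap A\to\tilde{L}^{+}_{Y}(y)$, the inclusion $\mathcal{L}^{+\infty}_{x,q}\subset\mathcal{L}^{+\infty}_{y,h(q)}$, and the relation $h(\tilde{\pi}(q,t))=\tilde{\sigma}(h(q),t)$ on $\tilde{L}^{+}_{X}(x)\setminus M_{X}$ become verbatim the assertions of the corollary. The nonemptiness of $\tilde{L}^{+}_{X}(x)\cap A=\tilde{L}^{+}_{X}(x)$ and the comparability of $y$ with $x$ with respect to $\tilde{L}^{+}_{X}(x)\cap A=\tilde{L}^{+}_{X}(x)$ are immediate from the hypotheses of the corollary. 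Hence the only hypothesis of \teoref{T2.1} that genuinely requires checking is that $A\setminus M_{X}=X\setminus M_{X}$ is positively $\tilde{\pi}$-invariant.

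To verify this I would appeal to the inductive construction of the impulsive trajectory together with condition (H2). Fix $z\in X\setminus M_{X}$; the claim is that $\tilde{\pi}(z,t)\notin M_{X}$ for every $t\geq 0$, so that $\tilde{\pi}^{+}(z)\subset X\setminus M_{X}$. On the first interval $[0,\phi(z))$ the trajectory coincides with $\pi(z,\cdot)$, which by the very definition of $\phi$ in \eqref{phi} does not meet $M_{X}$, while $\tilde{\pi}(z,0)=z\notin M_{X}$. At the impulse time $t=\phi(z)$ the trajectory equals $I_{X}(\pi(z,\phi(z)))\in I_{X}(M_{X})$, and since (H2) gives $M_{X}\cap I_{X}(M_{X})=\emptyset$, this value again lies outside $M_{X}$. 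Because the motion then restarts from a point of $X\setminus M_{X}$, an induction over the successive intervals $[t_{n}(z),t_{n+1}(z)]$ shows that the whole trajectory avoids $M_{X}$; by (H3) the trajectory is defined for all $t\geq 0$, which establishes the claim.

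With every hypothesis of \teoref{T2.1} now in force for $A=X$, its conclusion supplies the desired continuous mapping $h$ with all the stated properties (the additional $I_{X}$-invariant conclusion of the theorem is simply not invoked here). I expect the main obstacle to be purely a matter of bookkeeping rather than of substance: the essential point is to confirm that the reduction $A=X$ is legitimate, that is, that $X\setminus M_{X}$ is positively $\tilde{\pi}$-invariant, and it is precisely here that the standing hypotheses (H1)--(H3) --- in particular (H2) --- are used.
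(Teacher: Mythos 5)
Your proposal is correct. The paper gives no written proof of this corollary: it is meant as an immediate specialization of \teoref{T2.1}, and the only freedom lies in the choice of the set $A$. The route the paper has prepared is to take $A=\tilde{L}^{+}_{X}(x)$ itself, since then the hypothesis that $A\setminus M_{X}$ be positively $\tilde{\pi}$-invariant is precisely the fact already recorded in Section 3 (Proposition 4.3 of \cite{Manuel}) that $\tilde{L}^{+}(x)\setminus M$ is positively $\tilde{\pi}$-invariant, $\tilde{L}^{+}_{X}(x)\cap A=\tilde{L}^{+}_{X}(x)$, and nothing remains to check. You instead take $A=X$, which obliges you to prove that $X\setminus M_{X}$ is positively $\tilde{\pi}$-invariant; your verification --- on $[0,\phi(z))$ the trajectory avoids $M_{X}$ by the definition of $\phi$, at the impulse time it lands in $I_{X}(M_{X})$, which is disjoint from $M_{X}$ by (H2), and then induction over the intervals $[t_{n}(z),t_{n+1}(z)]$ together with (H3) finishes the argument --- is sound, and is indeed a standard structural fact about impulsive systems under (H2). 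So both reductions are legitimate and yield the corollary verbatim from \teoref{T2.1}; yours costs one extra (correctly proved) lemma that the paper's choice of $A$ sidesteps by quoting the invariance of $\tilde{L}^{+}(x)\setminus M$, but it has the merit of making explicit a basic fact the paper leaves implicit.
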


\begin{corollary}\label{CT2.2} 
Let $x\in X$, $y\in Y$ and assume that $\tilde{L}^{+}_{X}(x)$ is a nonempty $I_{X}$-invariant set.
If $\mathcal{L}_{x}^{+\infty}=\mathcal{L}_{y}^{+\infty}$, then there is a weak topological conjugation $h:\tilde{L}^{+}_{X}(x)\to \tilde{L}^{+}_{Y}(y)$. 
\end{corollary}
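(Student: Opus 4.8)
The plan is to exploit the symmetry of the hypothesis $\mathcal{L}_{x}^{+\infty}=\mathcal{L}_{y}^{+\infty}$: one inclusion will produce a continuous forward map $h$ carrying orbits of $\tilde{\pi}$ to orbits of $\tilde{\sigma}$, while the reverse inclusion will produce a continuous backward map $g$, and I will verify that $g$ is a two-sided inverse of $h$. Once $h$ is shown to be a homeomorphism, the orbit-preservation conditions supplied by \teoref{T2.1} are exactly conditions $(b)$ and $(c)$ of \defref{D4.1}, so $h$ will be a weak topological conjugation between $A=\tilde{L}^{+}_{X}(x)$ and $B=\tilde{L}^{+}_{Y}(y)$.

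First I would set $A=\tilde{L}^{+}_{X}(x)$, which is nonempty and $I_{X}$-invariant by hypothesis, and note that $A\setminus M_{X}=\tilde{L}^{+}_{X}(x)\setminus M_{X}$ is positively $\tilde{\pi}$-invariant by \cite[Proposition 4.3]{Manuel}. Since $\mathcal{L}_{x}^{+\infty}(\tilde{L}^{+}_{X}(x))\subset \mathcal{L}_{x}^{+\infty}=\mathcal{L}_{y}^{+\infty}$, the point $y$ is comparable with $x$ with respect to $\tilde{L}^{+}_{X}(x)$, so \teoref{T2.1} yields a continuous mapping $h:\tilde{L}^{+}_{X}(x)\to \tilde{L}^{+}_{Y}(y)$ satisfying $\mathcal{L}_{x,q}^{+\infty}\subset \mathcal{L}_{y,h(q)}^{+\infty}$ for every $q\in \tilde{L}^{+}_{X}(x)$, together with equalities \eqref{eqt1} and \eqref{eqt2}, i.e. conditions $(b)$ and $(c)$ of \defref{D4.1}. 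I would also observe that $\tilde{L}^{+}_{Y}(y)\neq\emptyset$: choosing any $q\in \tilde{L}^{+}_{X}(x)$ and a sequence $\{s_{n}\}\in\mathcal{L}_{x,q}^{+\infty}$, the equality $\mathcal{L}_{x}^{+\infty}=\mathcal{L}_{y}^{+\infty}$ forces $\{s_{n}\}\in\mathcal{L}_{y}^{+\infty}$, so $\tilde{\sigma}(y,s_{n})$ converges to a point of $\tilde{L}^{+}_{Y}(y)$.

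Next I would invoke the symmetry. Because $\mathcal{L}_{y}^{+\infty}(\tilde{L}^{+}_{Y}(y))\subset \mathcal{L}_{y}^{+\infty}=\mathcal{L}_{x}^{+\infty}$, the point $x$ is comparable with $y$ with respect to $\tilde{L}^{+}_{Y}(y)$, so applying \corref{CT2.1} (equivalently \lemaref{R1.1}) with the roles of $X$ and $Y$ interchanged gives a continuous mapping $g:\tilde{L}^{+}_{Y}(y)\to \tilde{L}^{+}_{X}(x)$ with $\mathcal{L}_{y,p}^{+\infty}\subset \mathcal{L}_{x,g(p)}^{+\infty}$ for all $p\in \tilde{L}^{+}_{Y}(y)$. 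To see that $g=h^{-1}$, I would take $q\in \tilde{L}^{+}_{X}(x)$ and a sequence $\{s_{n}\}\in\mathcal{L}_{x,q}^{+\infty}$; chaining the inclusions gives $\{s_{n}\}\in\mathcal{L}_{x,q}^{+\infty}\subset\mathcal{L}_{y,h(q)}^{+\infty}\subset\mathcal{L}_{x,g(h(q))}^{+\infty}$, so $\tilde{\pi}(x,s_{n})$ converges simultaneously to $q$ and to $g(h(q))$. By uniqueness of limits, $g(h(q))=q$, i.e. $g\circ h=\mathrm{id}$ on $\tilde{L}^{+}_{X}(x)$; the symmetric argument starting from $p\in\tilde{L}^{+}_{Y}(y)$ gives $h\circ g=\mathrm{id}$ on $\tilde{L}^{+}_{Y}(y)$.

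Consequently $h$ is a bijection whose inverse $g$ is continuous, so $h$ is a homeomorphism, which establishes condition $(a)$ of \defref{D4.1}; together with $(b)$ and $(c)$ already obtained from \teoref{T2.1}, this shows $h$ is a weak topological conjugation. I expect the main obstacle to be precisely the bijectivity and the continuity of the inverse, since \teoref{T2.1} alone only delivers a continuous homomorphism in one direction; the device that overcomes it is the symmetric construction of $g$ plus the uniqueness-of-limit argument, both of which rest essentially on the uniqueness statement already proved in \lemaref{L1.1}.
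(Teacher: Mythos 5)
Your proposal is correct and follows essentially the same route as the paper: both exploit the symmetry of $\mathcal{L}_{x}^{+\infty}=\mathcal{L}_{y}^{+\infty}$ to obtain, via \teoref{T2.1}, continuous maps $h$ and $g$ in each direction, identify $g=h^{-1}$, and then use the $I_{X}$-invariance of $\tilde{L}^{+}_{X}(x)$ to get conditions \eqref{eqt1} and \eqref{eqt2}. In fact, your chaining argument $\mathcal{L}_{x,q}^{+\infty}\subset\mathcal{L}_{y,h(q)}^{+\infty}\subset\mathcal{L}_{x,g(h(q))}^{+\infty}$ with uniqueness of limits makes explicit the step the paper dismisses as ``it is not difficult to see that $g=h^{-1}$.''
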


\begin{proof}
The condition $\mathcal{L}_{x}^{+\infty}=\mathcal{L}_{y}^{+\infty}$ implies that  $y$ is comparable with $x$ with respect to the set $X$ and $x$ is comparable with $y$ with respect to the set Y. 
By \teoref{T2.1} there are continuous maps $h:\tilde{L}^{+}_{X}(x)\to \tilde{L}^{+}_{Y}(y)$ and $g:\tilde{L}^{+}_{Y}(y)\to \tilde{L}^{+}_{X}(x)$ such that
\[ 
h(q)=\lim_{n\to+\infty}\tilde{\sigma}(y,s_{n}) \quad \mbox{for } q=\lim_{n\to+\infty}\tilde{\pi}(x,s_{n})
\] 
and
\[ 
g(p)=\lim_{n\to+\infty}\tilde{\pi}(x,t_{n}) \quad \mbox{for } p=\lim_{n\to+\infty}\tilde{\sigma}(y,t_{n}).
\] 
It is not difficult to see that $g=h^{-1}$ and $h$ is a homeomorphism. As $\tilde{L}^{+}_{X}(x)$ is $I_{X}$-invariant, we conclude by \teoref{T2.1} that $h$ satisfies \eqref{eqt1} and  \eqref{eqt2}. Therefore, the mapping $h$ defines a weak topological conjugation.
\end{proof}

\subsection{Asymptotically almost periodic motions and weak topological conjugation}

\hspace{0.5cm} In this section, we consider the class of asymptotically almost periodic motions and we investigate some relations between comparable points and the existence of a weak topological conjugation.

\begin{definition} A point $y\in Y$ is said to be strongly comparable with $x\in X$, if  $\mathcal{L}_{x,x}^{+\infty}\subset \mathcal{L}_{y,y}^{+\infty}$ and $\mathcal{L}_{x}^{+\infty}\subset\mathcal{L}_{y}^{+\infty}$.
\end{definition}

\begin{theorem} \label{T2.2}
Let $x\in X\setminus M_{X}$ be almost $\tilde{\pi}$-periodic
and $y\in Y$. Then the following statements are equivalent:
\begin{enumerate}
\item[$a)$] $y\in Y$ is strongly comparable with $x\in X$;
\item[$b)$] there is a continuous mapping  $h:\overline{\tilde{\pi}^{+}(x)}\to \overline{\tilde{\sigma}^{+}(y)}$ satisfying
\[
h(\tilde{\pi}(q,t))=\tilde{\sigma} (h(q),t)
\]
for all  $q\in \overline{\tilde{\pi}^{+}(x)}\setminus M_{X}$ and $t\in \mathbb{R}_{+}$, with $h(x)=y$;
\item[$c)$] $\mathcal{L}_{x}\subset \mathcal{L}_{y}$.
\end{enumerate}
\end{theorem}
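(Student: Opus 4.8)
The plan is to prove the cycle $(a)\Rightarrow(b)\Rightarrow(c)\Rightarrow(a)$, remarking along the way that $(b)\Rightarrow(a)$ is also immediate, so that $(a)\Leftrightarrow(b)$ is essentially free and all the work lives in $(a)\Rightarrow(b)$ (an application of \teoref{T2.1}) and in closing with $(c)\Rightarrow(a)$. Two standing observations will be used repeatedly. First, since $x\notin M_X$, hypotheses (H1)--(H2) force $\tilde{\pi}(x,t)\notin M_X$ for every $t\geq 0$ (the motion only meets $M_X$ at jump instants, where its value is a point of $I_X(M_X)$, disjoint from $M_X$), so $\tilde{\pi}^+(x)\subset X\setminus M_X$ and $X\setminus M_X$ is positively $\tilde{\pi}$-invariant. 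Second, because $x$ is almost $\tilde{\pi}$-periodic, \teoref{EG2} gives $\tilde{L}_X^+(x)=\overline{\tilde{\pi}^+(x)}$ and $x\in\tilde{L}_X^+(x)$, whence $\mathcal{L}_{x,x}^{+\infty}\neq\emptyset$.

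For $(a)\Rightarrow(b)$: the hypothesis $\mathcal{L}_x^{+\infty}\subset\mathcal{L}_y^{+\infty}$ says $y$ is comparable with $x$ with respect to $X$, hence with respect to $\tilde{L}_X^+(x)=\tilde{L}_X^+(x)\cap X$. Applying \teoref{T2.1} with $A=X$ (legitimate since $X\setminus M_X$ is positively $\tilde{\pi}$-invariant) I obtain a continuous $h:\overline{\tilde{\pi}^+(x)}\to\tilde{L}_Y^+(y)$ with $h(\tilde{\pi}(q,t))=\tilde{\sigma}(h(q),t)$ for $q\in\overline{\tilde{\pi}^+(x)}\setminus M_X$ and $h(q)=\lim_{n}\tilde{\sigma}(y,s_n)$ whenever $\{s_n\}\in\mathcal{L}_{x,q}^{+\infty}$. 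Choosing $\{t_n\}\in\mathcal{L}_{x,x}^{+\infty}$ I compute $h(x)=\lim_n\tilde{\sigma}(y,t_n)$; by the recurrence part of $(a)$, $\{t_n\}\in\mathcal{L}_{y,y}^{+\infty}$, so $h(x)=y$. Consequently $h(\tilde{\pi}(x,t))=\tilde{\sigma}(y,t)\in\tilde{\sigma}^+(y)$ for all $t$, and continuity yields $h\big(\overline{\tilde{\pi}^+(x)}\big)\subset\overline{\tilde{\sigma}^+(y)}$; corestricting the codomain gives exactly the map required in $(b)$ (which asks only for a continuous equivariant $h$ with $h(x)=y$, not a homeomorphism).

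For $(b)\Rightarrow(c)$: since $x\notin M_X$, condition $(b)$ gives $h(\tilde{\pi}(x,t))=\tilde{\sigma}(h(x),t)=\tilde{\sigma}(y,t)$ for all $t\geq 0$. If $\{t_n\}\in\mathcal{L}_x$, then $\tilde{\pi}(x,t_n)\to q$ with $q\in\overline{\tilde{\pi}^+(x)}=\mathrm{dom}(h)$, so $\tilde{\sigma}(y,t_n)=h(\tilde{\pi}(x,t_n))\to h(q)$ and $\{t_n\}\in\mathcal{L}_{y,h(q)}\subset\mathcal{L}_y$; thus $\mathcal{L}_x\subset\mathcal{L}_y$. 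The same computation restricted to sequences with $t_n\to+\infty$ proves $(b)\Rightarrow(a)$ (taking $q=x$ recovers $\mathcal{L}_{x,x}^{+\infty}\subset\mathcal{L}_{y,y}^{+\infty}$, general $q$ recovers $\mathcal{L}_x^{+\infty}\subset\mathcal{L}_y^{+\infty}$), so indeed $(a)\Leftrightarrow(b)$. Turning to $(c)\Rightarrow(a)$, the inclusion $\mathcal{L}_x^{+\infty}\subset\mathcal{L}_y^{+\infty}$ is automatic from $\mathcal{L}_x\subset\mathcal{L}_y$ together with the constraint $t_n\to+\infty$, so the whole difficulty is the recurrence inclusion $\mathcal{L}_{x,x}^{+\infty}\subset\mathcal{L}_{y,y}^{+\infty}$.

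To attack this, fix $\{t_n\}\in\mathcal{L}_{x,x}^{+\infty}$; by $\mathcal{L}_x\subset\mathcal{L}_y$ there is $p\in\tilde{L}_Y^+(y)$ with $\tilde{\sigma}(y,t_n)\to p$, and by \lemaref{L1.1} this $p$ is the \emph{same} for every element of $\mathcal{L}_{x,x}^{+\infty}$; everything reduces to proving $p=y$. Using $\mathcal{L}_x^{+\infty}\subset\mathcal{L}_y^{+\infty}$ and \teoref{T2.1} again (with $A=X$) I get a continuous equivariant $h:\overline{\tilde{\pi}^+(x)}\to\tilde{L}_Y^+(y)$ with $h(x)=p$ and $\tilde{\sigma}(p,s)=h(\tilde{\pi}(x,s))$; continuity along $\tilde{\pi}(x,t_n)\to x$ then gives $\tilde{\sigma}(p,t_n)\to p$, so $p$ recurs along the same times, and since $h$ is uniformly continuous on the compact set $\overline{\tilde{\pi}^+(x)}$ the image motion $\tilde{\sigma}(p,\cdot)$ is almost $\tilde{\sigma}$-periodic, making $K=\overline{\tilde{\sigma}^+(p)}$ a minimal, equicontinuous set that inherits the almost periods of $x$ by \teoref{EG1}. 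To finish I would, by \lemaref{L1.1}, replace $\{t_n\}$ by a sequence $\{a_n\}\to+\infty$ of genuine almost periods of $x$ with error tending to $0$ (relative density), still with $\tilde{\sigma}(y,a_n)\to p$, and exploit that each $a_n$ nearly fixes every point of $K$ to transport the recurrence back to the base point and conclude $\tilde{\sigma}(y,a_n)\to y$, i.e. $p=y$. \textbf{The main obstacle} is precisely this last step: showing that condition $(c)$ forbids any recurrent phase drift on the $Y$-side, so that the common recurrence of $x$ forces $y$ itself to be positively Poisson $\tilde{\sigma}$-stable and to lie in $K$ (rather than merely having $K$ in its limit set). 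This requires pushing the group/equicontinuity structure of the almost periodic hull through the comparability, and is made delicate by the discontinuities of $\tilde{\sigma}$ across $M_Y$, which would have to be handled exactly as in Case 2 of \teoref{T2.1} via the STC-tube together with \lemaref{lema1}, \lemaref{lema2} and \lemaref{lema3}.
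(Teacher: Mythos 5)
Your first two implications are correct and essentially the paper's own argument: for $(a)\Rightarrow(b)$ the paper invokes \corref{CT2.1} (i.e.\ \teoref{T2.1} applied to the set $\tilde{L}^{+}_{X}(x)$, whose complement of $M_X$ is positively $\tilde{\pi}$-invariant) together with \teoref{EG2} to identify $\tilde{L}^{+}_{X}(x)=\overline{\tilde{\pi}^{+}(x)}$, and it gets $h(x)=y$ exactly as you do, from $\mathcal{L}_{x,x}^{+\infty}\subset\mathcal{L}_{y,y}^{+\infty}$; your $(b)\Rightarrow(c)$ is verbatim the paper's. The genuine gap is $(c)\Rightarrow(a)$, which you leave unfinished, and the route you sketch cannot be repaired: after reducing to ``$p=y$'', every tool you propose (\teoref{T2.1}, equivariance, almost periodicity of the image, minimality/equicontinuity of the hull) uses only the inclusion $\mathcal{L}_{x}^{+\infty}\subset\mathcal{L}_{y}^{+\infty}$ plus almost periodicity of $x$ --- and that weaker hypothesis does \emph{not} imply $(a)$. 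For instance, let $X=\mathbb{R}/\mathbb{Z}$ with the rotation $\pi(x,t)=x+t$ and $x=0$, and let $Y=\mathbb{C}$ carry a spiral flow of angular speed $2\pi$ attracting onto the unit circle, with $y$ a point of modulus $2$ (impulsive sets can be placed away from the orbits concerned so (H1)--(H3) hold). If $t_n\to+\infty$ and $t_n$ converges mod $1$, then $\tilde{\sigma}(y,t_n)$ converges to a point of the unit circle, so $\mathcal{L}_{x}^{+\infty}\subset\mathcal{L}_{y}^{+\infty}$; yet every sequence of $\mathcal{L}_{x,x}^{+\infty}$ carries $y$ to the unit circle, never back to $y$, so strong comparability fails (and, consistently, $\mathcal{L}_x\not\subset\mathcal{L}_y$). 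Hence any proof of $(c)\Rightarrow(a)$ must exploit sequences in $\mathcal{L}_{x}$ that do \emph{not} tend to $+\infty$ --- precisely the part of hypothesis $(c)$ your argument never touches. (Your appeal to compactness and uniform continuity on $\overline{\tilde{\pi}^{+}(x)}$ is also unjustified: $X$ is not assumed complete, so \teoref{T2.14} is unavailable.)

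The paper closes this implication with the same interleaving device used in the proof of \lemaref{L1.1}, only interleaving with zeros. Suppose $\{t_{n}\}_{n\in\N}\in\mathcal{L}_{x,x}^{+\infty}\setminus\mathcal{L}_{y,y}^{+\infty}$. By $(c)$ the sequence $\tilde{\sigma}(y,t_{n})$ converges to some $p$, and necessarily $p\neq y$. Set $\overline{t}_{n}=t_{n}$ for $n$ even and $\overline{t}_{n}=0$ for $n$ odd. Then $\tilde{\pi}(x,\overline{t}_{n})\to x$, so $\{\overline{t}_{n}\}_{n\in\N}\in\mathcal{L}_{x,x}\subset\mathcal{L}_{x}$, while $\tilde{\sigma}(y,\overline{t}_{n})$ oscillates between values near $p$ and the value $y$, hence diverges, i.e.\ $\{\overline{t}_{n}\}_{n\in\N}\notin\mathcal{L}_{y}$ --- contradicting $\mathcal{L}_{x}\subset\mathcal{L}_{y}$. (The paper asserts this interleaved sequence lies in $\mathcal{L}_{x,x}^{+\infty}$, a harmless typo since it obviously cannot tend to $+\infty$; membership in $\mathcal{L}_{x,x}$ is all that is needed.) Note that this step uses no almost periodicity at all; that hypothesis enters only in $(a)\Rightarrow(b)$, exactly as in your write-up.
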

\begin{proof} First, let us prove that $(a)\Rightarrow (b)$. Since $x\in X\setminus M_{X}$ is almost $\tilde{\pi}$-periodic, we have  
   $\tilde{L}_{X}^{+}(x)=\overline{\tilde{\pi}^{+}(x)}$ (see Theorem \ref{EG2}). But $y$ is strongly comparable with $x$,  which means that $y$ is positively Poisson $\tilde{\sigma}$-stable. Note that $\tilde{L}_{Y}^{+}(y)\subset \overline{\tilde{\sigma}^{+}(y)}$.

Moreover, since $\mathcal{L}_{x}^{+\infty}(\tilde{L}_{X}^{+}(x)) \subset \mathcal{L}_{x}^{+\infty}\subset\mathcal{L}_{y}^{+\infty}$, we have $y$ is comparable with $x$ 
with respect to the set $\tilde{L}_{X}^{+}(x)$. Thus, according to Corollary \ref{CT2.1} there is a continuous mapping $h:\tilde{L}^{+}_{X}(x)\to \tilde{L}^{+}_{Y}(y)$, that is, $h:\overline{\tilde{\pi}^{+}(x)}\to \overline{\tilde{\sigma}^{+}(y)}$ such that $\mathcal{L}_{x,q}^{+\infty}\subset \mathcal{L}_{y, h(q)}^{+\infty}$ for all $q \in \tilde{L}^{+}_{X}(x)$  and $h(\tilde{\pi}(q,t))=\tilde{\sigma} (h(q),t)$ for all $t\in \mathbb{R}_{+}$ and $q\in \overline{\tilde{\pi}^{+}(x)}\setminus M_{X}$. Note that $x\in \overline{\tilde{\pi}^{+}(x)}=\tilde{L}^{+}_{X}(x)$, consequently there is a sequence $\{t_{n}\}_{n\in\N}\in \mathcal{L}_{x,x}^{+\infty}\subset \mathcal{L}_{y, y}^{+\infty}$ and we obtain
$\displaystyle\lim_{n\to+\infty}\tilde{\sigma}(y,t_{n})=y$. Since $\mathcal{L}_{x,x}^{+\infty}\subset \mathcal{L}_{y, h(x)}^{+\infty}$, we get
$h(x) = y$.

Now, let us show that $(b)\Rightarrow (c)$. Let $\{t_{n}\}_{n\in\N}\in \mathcal{L}_{x}$. 
Then there is $q\in \overline{\tilde{\pi}^{+}(x)}$ such that 
 $\displaystyle\lim_{n\to+\infty}\tilde{\pi}(x,t_{n})=q$. Since $h$ is a continuous mapping, $h(x)=y$ and we have condition $b)$, we get
 \[
\tilde{\sigma}(y,t_{n}) = \tilde{\sigma}(h(x),t_{n}) = h(\tilde{\pi}(x,t_{n})) \stackrel{n\to+\infty}{\longrightarrow} h(q).
\]
Therefore, $\{t_{n}\}_{n\in\N}\in \mathcal{L}_{y}.$
  
At last, we prove that $(c)\Rightarrow (a)$. It is clear that $\mathcal{L}_{x}^{+\infty}\subset\mathcal{L}_{y}^{+\infty}$ as $\mathcal{L}_{x}\subset \mathcal{L}_{y}$. In this way, we need to show that $\mathcal{L}^{+\infty}_{x,x}\subset \mathcal{L}^{+\infty}_{y,y}$. 
Suppose to the contrary that there is  $\{t_{n}\}_{n\in\N}\in \mathcal{L}_{x,x}^{+\infty}\setminus \mathcal{L}_{y,y}^{+\infty}$. Since $\mathcal{L}_{x}\subset \mathcal{L}_{y}$, there is  $p\in Y$, $p\neq y$, with $\{t_{n}\}_{n\in\N}\in \mathcal{L}_{y,p}^{+\infty}$. Now, we consider the sequence 
\[\overline{t}_{n}=\left \{\begin{array}{lcl} t_{n},& & n=2k, \quad k=1,2,\ldots,\vspace{0.5mm}\\
0,& & n=2k+1, \quad k=0,1,2,\ldots\vspace{0.5mm}.\\
\end{array}\right.
\]
Note that $\{\overline{t}_{n}\}_{n\in\N}\in \mathcal{L}^{+\infty}_{x,x}\subset \mathcal{L}_{x}^{+\infty}$ and $\{\overline{t}_{n}\}_{n\in\N}\not\in \mathcal{L}_{y}^{+\infty}$, which is a contradiction.
\end{proof}

\vspace{.1cm}

Recall that a \textit{time
reparametrization} is a homeomorphism $g:\mathbb{R}_{+}\to \mathbb{R}_{+}$ such that $g(0)=0$. Given $x\in X$, we define the following set
 \begin{eqnarray*}
 \mathcal{P}_{x}&=&\{z\in \tilde{L}_X^{+}(x)\cap \tilde{L}_X^{+}(z):\lim_{t\to+\infty}\rho_X(\tilde{\pi}(x,t),\tilde{\pi}(z,g_{z}(t)))=0, \\ && \text{ for some time reparametrization $g_{z}$}\}.\end{eqnarray*}

The set $ \mathcal{P}_x\setminus M_X$ is positively $\tilde{\pi}$-invariant as shown in the next result.

\begin{lemma}\label{L2.15}  The set $ \mathcal{P}_{x}\setminus M_X$ is positively $\tilde{\pi}$-invariant for each $x\in X$. 
\end{lemma}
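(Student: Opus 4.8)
The plan is to fix $z\in\mathcal{P}_{x}\setminus M_{X}$ and $s\geq 0$, set $w=\tilde{\pi}(z,s)$, and verify that $w$ satisfies the three defining conditions of $\mathcal{P}_{x}$ together with $w\notin M_{X}$; the case $s=0$ is immediate, so one may assume $s>0$. Throughout I will exploit the semigroup identity $\tilde{\pi}(w,\mu)=\tilde{\pi}(\tilde{\pi}(z,s),\mu)=\tilde{\pi}(z,s+\mu)$, valid for all $\mu\geq 0$ by (H3), which lets me relate the orbit and limit set of $w$ to those of $z$ \emph{without} invoking continuity of $\tilde{\pi}(\,\cdot\,,s)$ (which may fail when $s$ is an impulse instant of $z$).

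For the two limit-set conditions I would argue as follows. Since $z\in\tilde{L}^{+}_{X}(x)\setminus M_{X}$ and the set $\tilde{L}^{+}_{X}(x)\setminus M_{X}$ is positively $\tilde{\pi}$-invariant (the fact recalled earlier), the whole orbit $\tilde{\pi}^{+}(z)$ lies in $\tilde{L}^{+}_{X}(x)\setminus M_{X}$; in particular $w\in\tilde{L}^{+}_{X}(x)$ and $w\notin M_{X}$. Applying the same invariance to $\tilde{L}^{+}_{X}(z)\setminus M_{X}$ and using $z\in\tilde{L}^{+}_{X}(z)$, $z\notin M_{X}$, I obtain $w\in\tilde{L}^{+}_{X}(z)\setminus M_{X}$. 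Finally I would show $\tilde{L}^{+}_{X}(z)\subseteq\tilde{L}^{+}_{X}(w)$: if $\tilde{\pi}(z,\lambda_{n})\to p$ with $\lambda_{n}\to+\infty$, then for all large $n$ one has $\lambda_{n}\geq s$ and $\tilde{\pi}(z,\lambda_{n})=\tilde{\pi}(w,\lambda_{n}-s)$ with $\lambda_{n}-s\to+\infty$, whence $p\in\tilde{L}^{+}_{X}(w)$. Combining the last two displayed facts gives $w\in\tilde{L}^{+}_{X}(z)\subseteq\tilde{L}^{+}_{X}(w)$, so $w\in\tilde{L}^{+}_{X}(x)\cap\tilde{L}^{+}_{X}(w)$.

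For the asymptotic condition, let $g_{z}$ be the time reparametrization with $\rho_{X}(\tilde{\pi}(x,t),\tilde{\pi}(z,g_{z}(t)))\to 0$ as $t\to+\infty$. Since $g_{z}$ is a homeomorphism of $\mathbb{R}_{+}$ fixing $0$, the map $t\mapsto g_{z}(t)-s$ is strictly increasing, tends to $+\infty$, and is nonnegative for $t\geq g_{z}^{-1}(s)$. I would then define $g_{w}$ to agree with $g_{z}(t)-s$ for all large $t$ and to be interpolated by any strictly increasing continuous function on a compact initial interval so that $g_{w}(0)=0$; this makes $g_{w}$ a genuine time reparametrization. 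By the semigroup identity, $\tilde{\pi}(w,g_{w}(t))=\tilde{\pi}(z,s+g_{w}(t))=\tilde{\pi}(z,g_{z}(t))$ for all large $t$, so $\rho_{X}(\tilde{\pi}(x,t),\tilde{\pi}(w,g_{w}(t)))=\rho_{X}(\tilde{\pi}(x,t),\tilde{\pi}(z,g_{z}(t)))\to 0$. Hence $w\in\mathcal{P}_{x}$, and since $w\notin M_{X}$ we conclude $w\in\mathcal{P}_{x}\setminus M_{X}$, which proves the positive $\tilde{\pi}$-invariance.

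The only genuinely delicate points are avoiding the possible discontinuity of $\tilde{\pi}(\,\cdot\,,s)$ at impulse instants, which I sidestep by comparing limit sets along time-shifted sequences rather than passing $\tilde{\pi}(\,\cdot\,,s)$ through a limit, and checking that the shifted map $g_{z}-s$ can be corrected near the origin into a bona fide homeomorphism of $\mathbb{R}_{+}$ fixing $0$ without disturbing its large-$t$ behavior. Everything else reduces to routine bookkeeping with the semigroup property and the previously recorded invariance of $\tilde{L}^{+}_{X}(\,\cdot\,)\setminus M_{X}$.
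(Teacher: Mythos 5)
Your proof is correct and takes essentially the same route as the paper's: both exploit the positive $\tilde{\pi}$-invariance of $(\tilde{L}^{+}_{X}(x)\cap\tilde{L}^{+}_{X}(z))\setminus M_{X}$ to place $\tilde{\pi}(z,s)$ in the right limit sets and outside $M_{X}$, and both build the new reparametrization by taking $g_{z}(t)-s$ for large $t$ and patching it continuously down to $0$ on an initial compact interval (the paper's patch is the explicit linear one $t/g_{z}^{-1}(s+1)$ on $[0,g_{z}^{-1}(s+1)]$). The one point where you are more careful is the verification that $\tilde{L}^{+}_{X}(z)\subseteq\tilde{L}^{+}_{X}(\tilde{\pi}(z,s))$, hence $\tilde{\pi}(z,s)\in\tilde{L}^{+}_{X}(\tilde{\pi}(z,s))$ as the definition of $\mathcal{P}_{x}$ actually requires; the paper only records $\tilde{\pi}(z,s)\in\tilde{L}^{+}_{X}(z)$ and leaves that last identification implicit.
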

\begin{proof} Let $x\in X$ and assume that $ \mathcal{P}_{x}\setminus M_X$ is nonempty. For $z\in  \mathcal{P}_{x}\setminus M_X$ and $s>0$, there are $z\in \tilde{L}_X^{+}(x)\cap \tilde{L}_X^{+}(z)$ and a time reparametization $g_{z}$ such that
	\begin{equation}\label{Eq216.1}\lim_{t\to+\infty}\rho_X(\tilde{\pi}(x,t),\tilde{\pi}(z,g_{z}(t))=0.\end{equation}
	 Since $(\tilde{L}^{+}_X(x)\cap \tilde{L}^{+}_X(z))\setminus M_X$ is positively $\tilde{\pi}$-invariant, we have $\tilde{\pi}(z,s)\in (\tilde{L}_X^{+}(x)\cap \tilde{L}_X^{+}(z))\setminus M_{X}$.  Thus, consider the time reparametrization 
	\[
	G_{s}(t)= \left \{\begin{array}{lcl}
	\frac{t}{g_z^{-1}(s+1)} ,& &  t\in[0,g_z^{-1}(s+1)]
	,\vspace{0.5mm}\\
	g_z(t)-s,& & t>g^{-1}_z(s+1).
	\vspace{0.5mm}
	\end{array}\right.
	\]
Consequently,
\[ 
\rho_X(\tilde{\pi}(x,t),\tilde{\pi}(\tilde{\pi}(z, s),G_{s}(t)))= \rho_X(\tilde{\pi}(x,t),\tilde{\pi}(z,g_{z}(t)) \quad \text{for all} \quad t> g^{-1}_{z}(s+1),
\]	
and by \eqref{Eq216.1}, we conclude that $\tilde{\pi}(z,s)\in  \mathcal{P}_{x}\setminus M_X.$
Therefore, $ \mathcal{P}_{x}\setminus M_X$ is positively  $\tilde{\pi}$-invariant.  
\end{proof}

Next, we present sufficient conditions for the set $ \mathcal{P}_{x}\setminus M_X$ to be nonempty. For that, we recall the concept of asymptotic motions as introduced in \cite{artigo2}.

\begin{definition}
A point $x\in X$ is called asymptotically
$\tilde{\pi}$-stationary (resp., asymptotically
$\tilde{\pi}$-periodic, asymptotically almost $\tilde{\pi}$-periodic, asymptotically Poisson $\tilde{\pi}$-stable) if there exist a stationary (resp., $\tilde{\pi}$-periodic, almost $\tilde{\pi}$-periodic, positively Poisson $\tilde{\pi}$-stable) point $p\in X$ and a reparametrization $g_p$ such that
\[
\lim_{t\to+\infty}\rho_X( \tilde{\pi}(x,t), \tilde{\pi}(p,g_p(t)))=0.
\]
\end{definition}

Note that if $x\in X$ is asymptotically almost $\tilde{\pi}$-periodic then $ \mathcal{P}_{x}\neq \emptyset$.

	\begin{lemma}\label{LemaPx111} Let $x\in X$ be an asymptotically almost $\tilde{\pi}$-periodic point. Then $ \mathcal{P}_{x}\setminus M_X$ is a nonempty set.  
		
	\end{lemma}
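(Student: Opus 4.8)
The plan is to unpack the definition of asymptotically almost $\tilde{\pi}$-periodic and exhibit an explicit witness $z$ lying in $\mathcal{P}_{x}\setminus M_X$. By hypothesis there exist an almost $\tilde{\pi}$-periodic point $p\in X$ and a reparametrization $g_p$ such that $\lim_{t\to+\infty}\rho_X(\tilde{\pi}(x,t),\tilde{\pi}(p,g_p(t)))=0$. My candidate for the witness is $p$ itself (or a suitable point on its orbit closure lying off $M_X$). To place $p$ in $\mathcal{P}_x$ I must verify the three requirements: that $p\in\tilde{L}_X^{+}(x)$, that $p\in\tilde{L}_X^{+}(p)$, and that the asymptotic-shadowing condition defining $\mathcal{P}_x$ holds for $p$ with some reparametrization.

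The shadowing condition is essentially free: it is exactly the displayed limit in the definition of asymptotically almost periodic, with $g_z=g_p$. The membership $p\in\tilde{L}_X^{+}(p)$ follows because $p$ is almost $\tilde{\pi}$-periodic: by Theorem \ref{EG2} an almost $\tilde{\pi}$-periodic point is positively Poisson $\tilde{\pi}$-stable, i.e. $p\in\tilde{L}_X^{+}(p)$, so that requirement is handed to me directly. The genuinely substantive step is showing $p\in\tilde{L}_X^{+}(x)$: I would argue that since $\tilde{\pi}(x,t)$ shadows $\tilde{\pi}(p,g_p(t))$ as $t\to+\infty$, and since the orbit of $p$ returns arbitrarily close to $p$ at arbitrarily large times (again by Poisson stability of $p$, via $p\in\tilde{L}_X^{+}(p)$), one can extract times $t_n\to+\infty$ with $\tilde{\pi}(p,g_p(t_n))\to p$ and hence, by the shadowing estimate and the triangle inequality, $\tilde{\pi}(x,t_n)\to p$. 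This gives $p\in\tilde{L}_X^{+}(x)$ and therefore $p\in\tilde{L}_X^{+}(x)\cap\tilde{L}_X^{+}(p)$, so $p\in\mathcal{P}_x$ and $\mathcal{P}_x\neq\emptyset$.

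The final issue, which is what the statement is really after, is to land a witness \emph{outside} $M_X$. Here I would invoke Lemma \ref{jaqueline}: since $\mathcal{P}_x\subset\tilde{L}_X^{+}(x)$ is nonempty, $\tilde{L}_X^{+}(x)\neq\emptyset$ and so $\tilde{L}_X^{+}(x)\setminus M_X\neq\emptyset$. To convert this into a point of $\mathcal{P}_x$ itself lying off $M_X$, I would use Lemma \ref{L2.15}: $\mathcal{P}_x\setminus M_X$ is positively $\tilde{\pi}$-invariant. Concretely, starting from the witness $p\in\mathcal{P}_x$ produced above, if $p\in M_X$ one flows along $\tilde{\pi}$ to reach a point of $\mathcal{P}_x$ off $M_X$; by hypotheses (H1)--(H2) the impulsive trajectory immediately leaves $M_X$ (since $p$ cannot be an initial point in $M$ and $M\cap I(M)=\emptyset$), and any such point remains in $\mathcal{P}_x$ because $\mathcal{P}_x$ is, being a union of full orbit-pieces, stable under the flow while the shadowing reparametrization is adjusted exactly as in the proof of Lemma \ref{L2.15}.

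The main obstacle I anticipate is the extraction step proving $p\in\tilde{L}_X^{+}(x)$: one must combine the asymptotic estimate with the recurrence of $p$ while ensuring the return times chosen for $p$ (where $\tilde{\pi}(p,g_p(t_n))\to p$) are pulled back through $g_p$ to genuine times $t_n\to+\infty$, using that $g_p$ is a homeomorphism of $\mathbb{R}_+$ fixing $0$ and hence unbounded. The reparametrization bookkeeping and the control of convergence across the impulsive discontinuities (appealing to Lemmas \ref{lema1}--\ref{lema3} if the shadowing times approach jump times) are where the care is needed.
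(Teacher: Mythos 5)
Your first step---showing that the almost $\tilde{\pi}$-periodic point $p$ furnished by the definition of asymptotic almost periodicity lies in $\mathcal{P}_x$---is correct and matches the paper's opening move: Theorem \ref{EG2} gives $p\in\tilde{L}_X^{+}(p)$, and pulling the return times back through the homeomorphism $g_p$ and combining with the shadowing limit gives $p\in\tilde{L}_X^{+}(x)$. The genuine gap is in your treatment of the case $p\in M_X$. You propose to flow $p$ forward and assert that the shifted point stays in $\mathcal{P}_x$ because $\mathcal{P}_x$ is ``stable under the flow,'' citing Lemma \ref{L2.15}. But Lemma \ref{L2.15} asserts positive invariance of $\mathcal{P}_x\setminus M_X$, so it can only be applied to a point that already lies in $\mathcal{P}_x\setminus M_X$---which is exactly the point you are trying to produce; the appeal is circular. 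Moreover, flow-invariance of $\mathcal{P}_x$ \emph{at a point of} $M_X$ is simply not available in an impulsive system: to get $\tilde{\pi}(p,s)\in\mathcal{P}_x$ for $0<s<\phi(p)$ you must verify $\tilde{\pi}(p,s)\in\tilde{L}_X^{+}(x)\cap\tilde{L}_X^{+}(\tilde{\pi}(p,s))$, and neither membership follows from $p\in\tilde{L}_X^{+}(x)\cap\tilde{L}_X^{+}(p)$, because all the invariance and continuity statements in the paper (the invariance of $\tilde{L}^{+}\setminus M$, Lemmas \ref{lema1}--\ref{lema3}) hold only off the impulsive set: points $\tilde{\pi}(x,t_n)$ converging to $p\in M_X$ jump near $I_X(p)$ shortly afterwards, while the trajectory of $p$ itself runs on to $\pi(p,s)$, so limits of the time-shifted sequence track the orbit of $I_X(p)$, not that of $p$. (Your detour through Lemma \ref{jaqueline} has the same defect: it yields a point of $\tilde{L}_X^{+}(x)\setminus M_X$, not of $\mathcal{P}_x\setminus M_X$.)

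What closes this gap in the paper is the almost periodicity of $p$, which your argument never exploits beyond Poisson stability of $p$ itself. By Lemma \ref{EM1}, each point $\tilde{\pi}(p,s)$ with $0<s<\phi(p)$ is again almost $\tilde{\pi}$-periodic, and it lies off $M_X$ by the very definition of $\phi$ (this, rather than (H1)--(H2), is the correct reason the trajectory leaves $M_X$). Theorem \ref{EG2} then gives the Poisson stability of the \emph{shifted} point, $\tilde{\pi}(p,s)\in\tilde{L}_X^{+}(\tilde{\pi}(p,s))$, and feeding its return times into the shadowing limit, pulled back through $g_p$, gives $\tilde{\pi}(p,s)\in\tilde{L}_X^{+}(x)$. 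Only after these two memberships are secured does the reparametrization bookkeeping (the map $G_s$, exactly as in the proof of Lemma \ref{L2.15}) finish the argument, as you describe. So your outline is structurally the paper's proof, but its decisive ingredient---propagating almost periodicity along the orbit via Lemma \ref{EM1} so that both limit-set memberships can be checked for the shifted point---is missing, and the invariance principle you substitute for it fails precisely at points of $M_X$.
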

	\begin{proof}
		Since $x\in X$ is asymptotically almost $\tilde{\pi}$-periodic, 
		there exist an almost $\tilde{\pi}$-periodic point $p\in X  $ and a reparametrization $g_p$ such that
		\begin{equation}
		\label{EQ1111}
		\lim_{t\to+\infty}\rho_X( \tilde{\pi}(x,t), \tilde{\pi}(p,g_p(t)))=0.
		\end{equation}
		This means that $p\in  \mathcal{P}_{x}$.  If $p\in   \mathcal{P}_{x}\setminus M_{X}$ then the proof is complete.
		But if $p\in M_{X}\cap   \mathcal{P}_{x}$, it follows by Lemma \ref{EM1} that $\tilde{\pi}(p,s)\in X\setminus M_{X}$ is an almost periodic point for all
		$0<s<\phi(p)$. According to Theorem \ref{EG2}, $
		\tilde{\pi}(p,s)\in \tilde{L}_{X}^{+}(\tilde{\pi}(p,s))$ and by \eqref{EQ1111} we have $\tilde{\pi}(p,s)\in  \tilde{L}_{X}^{+}(x)$.
		Using the time reparametrization 
			\[
			G_{s}(t)= \left \{\begin{array}{lcl}
			\frac{t}{g_p^{-1}(s+1)} ,& &  t\in[0,g_p^{-1}(s+1)]
			,\vspace{0.5mm}\\
			g_p(t)-s,& & t>g^{-1}_p(s+1),
			\vspace{0.5mm}
			\end{array}\right.
			\]
		we obtain
\[
\rho_X( \tilde{\pi}(x,t), \tilde{\pi}(\tilde{\pi}(p,s),G_{s}(t)))=
		\rho_X( \tilde{\pi}(x,t),\tilde{\pi}(p,g_{p}(t)) \quad \text{for all} \quad t>g^{-1}_p(s+1).
\]
Consequently, by \eqref{EQ1111}, we have $\tilde{\pi}(p,s)\in  \mathcal{P}_{x}\setminus M_{X}$.
\end{proof}

In the next definition, we define the concept of comparability in limit.

\begin{definition}
A point $y\in Y$ is said to be comparable in limit with $x\in X$, if  $\mathcal{L}_{x}^{+\infty}\subset \mathcal{L}_{y}^{+\infty}$.
\end{definition}

Let $z \in  \mathcal{P}_x$. By definition of $ \mathcal{P}_x$ there is a time reparametrization $g_z$ such that  $$\displaystyle\lim_{n\to+\infty}\rho_{X}(\tilde{\pi}(x,t),\tilde{\pi}(z, g_z(t)))=0.$$
We shall denote this reparametrization by $g_z^x$.

\begin{theorem}\label{T2.3}
 Let $x\in X$ be asymptotically almost  $\tilde{\pi}$-periodic and $y\in Y$. The point $y$ 
 is comparable in limit with $x$ if and only if there is a continuous mapping $h:\tilde{L}_{X}^{+}(x)\to\tilde{L}_{Y}^{+}(y)$ 
satisfying the conditions:
\begin{enumerate}
\item[$a)$] $\mathcal{L}_{x,q}^{+\infty}\subset \mathcal{L}_{y, h(q)}^{+\infty}$ for all $q \in \tilde{L}^{+}_{X}(x)$;

\item[$b)$] $h(\tilde{\pi}(q,t))=\tilde{\sigma} (h(q),t)$ for all $q\in \tilde{L}^{+}_{X}(x)\setminus M_{X}$ and $t\in \mathbb{R}_{+}$;

\item[$c)$] For $\tilde{q}\in  \mathcal{P}_{x}\setminus M_{X}$ and $\{t_{n}\}_{n\in\N}\in \mathcal{L}_{x}^{+ \infty}$ we have
\[
\lim_{n\to+\infty}\rho_{Y}(\tilde{\sigma}(y,t_{n}),\tilde{\sigma}(h(\tilde{q}),g_{\tilde{q}}^{x}(t_{n})))=0.
\]
\end{enumerate}
\end{theorem}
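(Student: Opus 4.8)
The plan is to prove the two implications separately. The reverse implication is essentially immediate from condition $a)$, while the forward implication carries all the content, with condition $c)$ being the delicate point. For the reverse direction, suppose such an $h$ exists and take any $\{t_n\}_{n\in\N}\in\mathcal{L}_x^{+\infty}$; by definition there is $p\in X$ with $t_n\to+\infty$ and $\tilde{\pi}(x,t_n)\to p$, so $p\in\tilde{L}_X^+(x)$ and $\{t_n\}_{n\in\N}\in\mathcal{L}_{x,p}^{+\infty}$. Then condition $a)$ gives $\{t_n\}_{n\in\N}\in\mathcal{L}_{x,p}^{+\infty}\subset\mathcal{L}_{y,h(p)}^{+\infty}\subset\mathcal{L}_y^{+\infty}$, whence $\mathcal{L}_x^{+\infty}\subset\mathcal{L}_y^{+\infty}$; that is, $y$ is comparable in limit with $x$.

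For the forward direction, assume $y$ is comparable in limit with $x$, i.e. $\mathcal{L}_x^{+\infty}\subset\mathcal{L}_y^{+\infty}$. Since $x$ is asymptotically almost $\tilde{\pi}$-periodic we have $\mathcal{P}_x\neq\emptyset$, and as $\mathcal{P}_x\subset\tilde{L}_X^+(x)$ this forces $\tilde{L}_X^+(x)\neq\emptyset$. Moreover $\mathcal{L}_x^{+\infty}(\tilde{L}_X^+(x))\subset\mathcal{L}_x^{+\infty}\subset\mathcal{L}_y^{+\infty}$, so $y$ is comparable with $x$ with respect to $\tilde{L}_X^+(x)$. I would then invoke Corollary \ref{CT2.1} to obtain a continuous map $h:\tilde{L}_X^+(x)\to\tilde{L}_Y^+(y)$ satisfying $a)$ and $b)$ directly, with the explicit description $h(q)=\lim_{n\to+\infty}\tilde{\sigma}(y,s_n)$ whenever $\{s_n\}_{n\in\N}\in\mathcal{L}_{x,q}^{+\infty}$.

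It remains to verify $c)$, which is the main obstacle. Fix $\tilde{q}\in\mathcal{P}_x\setminus M_X$ and $\{t_n\}_{n\in\N}\in\mathcal{L}_x^{+\infty}$, and let $p\in\tilde{L}_X^+(x)$ be the limit of $\tilde{\pi}(x,t_n)$, so that $\{t_n\}_{n\in\N}\in\mathcal{L}_{x,p}^{+\infty}$. By $a)$ we get $\tilde{\sigma}(y,t_n)\to h(p)$. Writing $s_n=g^x_{\tilde{q}}(t_n)$ and using the defining property of $\mathcal{P}_x$, namely $\lim_{t\to+\infty}\rho_X(\tilde{\pi}(x,t),\tilde{\pi}(\tilde{q},g^x_{\tilde{q}}(t)))=0$ evaluated along $t_n\to+\infty$, the relation $\rho_X(\tilde{\pi}(x,t_n),\tilde{\pi}(\tilde{q},s_n))\to 0$ together with $\tilde{\pi}(x,t_n)\to p$ forces $\tilde{\pi}(\tilde{q},s_n)\to p$. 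The key step is that, since $\tilde{q}\in\mathcal{P}_x\setminus M_X\subset\tilde{L}_X^+(x)\setminus M_X$, property $b)$ converts the term in question into an evaluation of $h$: $\tilde{\sigma}(h(\tilde{q}),s_n)=h(\tilde{\pi}(\tilde{q},s_n))$. Because $\tilde{L}_X^+(x)\setminus M_X$ is positively $\tilde{\pi}$-invariant, each $\tilde{\pi}(\tilde{q},s_n)$ lies in the domain of $h$, and continuity of $h$ then gives $h(\tilde{\pi}(\tilde{q},s_n))\to h(p)$. Combining the two limits via the triangle inequality yields $\rho_Y(\tilde{\sigma}(y,t_n),\tilde{\sigma}(h(\tilde{q}),g^x_{\tilde{q}}(t_n)))\to 0$, which is precisely $c)$.

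I expect the only genuine difficulty to be in $c)$: one must recognize that flow-equivariance $b)$ turns $\tilde{\sigma}(h(\tilde{q}),s_n)$ into $h$ applied to a point of $\tilde{L}_X^+(x)$, after which the entire assertion collapses to the continuity of $h$ and the convergence supplied by the reparametrization $g^x_{\tilde{q}}$. The care points are that $\tilde{q}\notin M_X$ is exactly what licenses applying $b)$, and that the positive $\tilde{\pi}$-invariance of $\tilde{L}_X^+(x)\setminus M_X$ keeps the iterates $\tilde{\pi}(\tilde{q},s_n)$ in the domain of $h$; both are guaranteed by the hypotheses on $\tilde{q}$.
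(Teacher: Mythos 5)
Your proof is correct. The forward implication coincides with the paper's own argument step for step: nonemptiness of $\tilde{L}^{+}_{X}(x)$ and of $\mathcal{P}_{x}\setminus M_{X}$ (Lemma \ref{LemaPx111}), Corollary \ref{CT2.1} to produce a continuous $h$ satisfying $a)$ and $b)$, and then the verification of $c)$ by passing the limit $\tilde{\pi}(x,t_{n})\to p$ through the reparametrization to get $\tilde{\pi}(\tilde{q},g_{\tilde{q}}^{x}(t_{n}))\to p$, applying continuity of $h$, and converting $h(\tilde{\pi}(\tilde{q},\cdot))$ into $\tilde{\sigma}(h(\tilde{q}),\cdot)$ via $b)$; the paper writes $\overline{q}$ for your $p$, and your two ``care points'' (that $\tilde{q}\notin M_{X}$ is what licenses $b)$, and that positive $\tilde{\pi}$-invariance of $\tilde{L}^{+}_{X}(x)\setminus M_{X}$ keeps the orbit of $\tilde{q}$ inside the domain of $h$) are exactly what the paper uses tacitly. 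Where you genuinely differ is the reverse implication: you obtain $\mathcal{L}_{x}^{+\infty}\subset\mathcal{L}_{y}^{+\infty}$ from condition $a)$ alone, by noting that the limit point $p$ of $\tilde{\pi}(x,t_{n})$ lies in $\tilde{L}^{+}_{X}(x)$, so that $a)$ places $\{t_{n}\}_{n\in\N}$ in $\mathcal{L}_{y,h(p)}^{+\infty}\subset\mathcal{L}_{y}^{+\infty}$. The paper instead proves this direction without invoking $a)$ at all: it takes $\tilde{q}\in\mathcal{P}_{x}\setminus M_{X}$ via Lemma \ref{LemaPx111}, uses the reparametrization to get $\tilde{\pi}(\tilde{q},g_{\tilde{q}}^{x}(t_{n}))\to q$, converts via $b)$ and continuity to $\tilde{\sigma}(h(\tilde{q}),g_{\tilde{q}}^{x}(t_{n}))\to h(q)$, and closes with the triangle inequality together with condition $c)$. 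Your route is shorter and makes clear that this direction needs neither the asymptotic almost periodicity of $x$ nor conditions $b)$ and $c)$; the paper's route shows the complementary fact that $b)$ and $c)$ by themselves already force comparability. Both arguments are valid, so the equivalence holds either way.
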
 
\begin{proof}
Since $x\in X$ is asymptotically almost  $\tilde{\pi}$-periodic, we have  $ \mathcal{P}_{x}\setminus M_{X} \neq \emptyset$, see Lemma \ref{LemaPx111}.

 First, let us prove the necessary condition. Note that $\tilde{L}^{+}_{X}(x) \neq \emptyset$ as $x\in X$ is asymptotically almost $\tilde{\pi}$-periodic. Besides, since $y$ is comparable in limit with $x$ we have
 \[
 \mathcal{L}_{x}^{+\infty}(\tilde{L}^{+}_{X}(x)) \subset  \mathcal{L}_{x}^{+\infty}\subset \mathcal{L}_{y}^{+\infty}.
 \]
 By Corollary \ref{CT2.1}, there is a continuous mapping $h:\tilde{L}^{+}_{X}(x)\to \tilde{L}^{+}_{Y}(y)$ such that $\mathcal{L}_{x,q}^{+\infty}\subset \mathcal{L}_{y, h(q)}^{+\infty}$ for all $q \in \tilde{L}^{+}_{X}(x)$  and $h(\tilde{\pi}(q,t))=\tilde{\sigma} (h(q),t)$ for all $q\in \tilde{L}^{+}_{X}(x)\setminus M_{X}$ and $t\in \mathbb{R}_+$. 

 Now, let $\{t_{n}\}_{n\in\N}\in \mathcal{L}_{x}^{+\infty}$ and $\tilde{q}\in  \mathcal{P}_{x}\setminus M_{X}$. By definition of $ \mathcal{P}_x$ there is a time reparametrization $g_{\tilde{q}}^{x}$ satisfying
 \begin{equation}\label{eq3.10}
 \lim_{t\to+\infty}\rho_{X}( \tilde{\pi}(x,t), \tilde{\pi}(\tilde{q},g_{\tilde{q}}^{x}(t)))=0.
 \end{equation}
 Since $\{t_{n}\}_{n\in\N}\in \mathcal{L}_{x}^{+\infty}$, there exists $\overline{q}\in \tilde{L}_{X}^{+}(x)$ such that
\begin{equation}\label{eq3.9}\overline{q}=\lim_{n\to+\infty}\tilde{\pi}(x,t_{n})\quad \mbox{and}\quad h(\overline{q})=\lim_{n\to+\infty}\tilde{\sigma}(y,t_{n}),
\end{equation}
where the second limit presented above is obtained by the condition $\mathcal{L}_{x,q}^{+\infty}\subset \mathcal{L}_{y, h(q)}^{+\infty}$ for all $q \in \tilde{L}^{+}_{X}(x)$. By \eqref{eq3.10} and \eqref{eq3.9}, we have $\overline{q}=\displaystyle\lim_{n\to+\infty}\tilde{\pi}(\tilde{q},g_{\tilde{q}}^{x}(t_{n}))$.
 By continuity of $h$, we get
  \begin{equation}\label{eq3.10.1}
 h(\overline{q})=\displaystyle\lim_{n\to+\infty}h(\tilde{\pi}(\tilde{q},g_{\tilde{q}}^{x}(t_{n}))).
 \end{equation}

Using condition $b)$ and \eqref{eq3.10.1}, we obtain
\[
h(\overline{q}) =\displaystyle\lim_{n\to+\infty}\tilde{\sigma}(h(\tilde{q}),g_{\tilde{q}}^{x}(t_{n})).
\]
Hence,
\[
\rho_{Y}(\tilde{\sigma}(y,t_{n}),\tilde{\sigma}(h(\tilde{q}),g_{\tilde{q}}^{x}(t_{n}))) \stackrel{n\to+\infty}{\longrightarrow} 0
\]
and the necessary condition is proved.

Conversely, let $\{t_{n}\}_{n\in\N}\in\mathcal{L}_{x}^{+\infty}$. Then there is $q\in\tilde{L}_{X}^{+}(x)$ such that $\tilde{\pi}(x,t_{n})\stackrel{n\to+\infty}{\longrightarrow} q$. By \lemaref{LemaPx111}, we may take
$\tilde{q}\in  \mathcal{P}_{x}\setminus M_{X}$. Let $g_{\tilde{q}}^{x}$ be the time reparametrization such that 
\[
 \lim_{t\to+\infty}\rho_{X}( \tilde{\pi}(x,t), \tilde{\pi}(\tilde{q},g_{\tilde{q}}^{x}(t)))=0.
 \]
Hence, $\tilde{\pi}(\tilde{q},g_{\tilde{q}}^{x}(t_{n})) \stackrel{n\to+\infty}{\longrightarrow} q$. Using condition $b)$, we have 
\[h(q)=h(\lim_{n\to+\infty}\tilde{\pi}(\tilde{q},g_{\tilde{q}}^{x}(t_{n})))=\lim_{n\to+\infty}h(\tilde{\pi}(\tilde{q},g_{\tilde{q}}^{x}(t_{n})))=
\lim_{n\to+\infty}\tilde{\sigma}(h(\tilde{q}),g_{\tilde{q}}^{x}(t_{n})).\]
Since
 $$\rho_{Y}(\tilde{\sigma}(y,t_{n}),h(q))\leq \rho_{Y}(\tilde{\sigma}(y,t_{n}),\tilde{\sigma}(h(\tilde{q}),g_{\tilde{q}}^{x}(t_{n}))+\rho_{Y}(\tilde{\sigma}(h(\tilde{q}),g_{\tilde{q}}^{x}(t_{n})),h(q)),$$
and we have condition $c)$, we conclude that $\{t_{n}\}_{n\in\N}\in \mathcal{L}_{y}^{+\infty}$, i.e., $\mathcal{L}_{x}^{+\infty}\subset \mathcal{L}_{y}^{+\infty}$ and the result is proved.
\end{proof}

\begin{corollary}\label{C3.12} Let $x\in X$ be asymptotically almost  $\tilde{\pi}$-periodic such that $\overline{\tilde{\pi}^{+}(x)}$ is compact.
The point $y\in Y$ 
is comparable in limit with $x$ if and only if there is a continuous mapping $h:\tilde{L}_{X}^{+}(x)\to\tilde{L}_{Y}^{+}(y)$ satisfying the conditions:
\begin{enumerate}
\item[$a)$] $\mathcal{L}_{x,q}^{+\infty}\subset \mathcal{L}_{y, h(q)}^{+\infty}$ for all $q \in \tilde{L}^{+}_{X}(x)$;

\item[$b)$] $h(\tilde{\pi}(q,t))=\tilde{\sigma} (h(q),t)$ for all $q\in \tilde{L}^{+}_{X}(x)\setminus M_{X}$ and $t\in \mathbb{R}_{+}$;

\item[$c)$] For $\tilde{q}\in  \mathcal{P}_{x}\setminus M_{X}$ we have
\[
\lim_{t\to+\infty}\rho_{Y}(\tilde{\sigma}(y,t),\tilde{\sigma}(h(\tilde{q}),g_{\tilde{q}}^{x}(t)))=0.
\]
\end{enumerate}
\end{corollary}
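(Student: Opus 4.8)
The plan is to deduce this corollary directly from \teoref{T2.3} by observing that, under the extra hypothesis that $\overline{\tilde{\pi}^{+}(x)}$ is compact, the sequential condition $c)$ appearing in \teoref{T2.3} and the uniform limit condition $c)$ stated here are equivalent; the remaining conditions $a)$ and $b)$ are literally the same. Since $x$ is asymptotically almost $\tilde{\pi}$-periodic, \lemaref{LemaPx111} guarantees $\mathcal{P}_{x}\setminus M_{X}\neq\emptyset$, so fixing $\tilde{q}\in\mathcal{P}_{x}\setminus M_{X}$ together with its reparametrization $g_{\tilde{q}}^{x}$ is legitimate throughout. For brevity set $f(t)=\rho_{Y}(\tilde{\sigma}(y,t),\tilde{\sigma}(h(\tilde{q}),g_{\tilde{q}}^{x}(t)))$ for $t\geq 0$; the two versions of $c)$ then read ``$f(t_{n})\to 0$ for every $\{t_{n}\}_{n\in\N}\in\mathcal{L}_{x}^{+\infty}$'' and ``$\lim_{t\to+\infty}f(t)=0$'', respectively.

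For the forward implication I would apply the necessity part of \teoref{T2.3} to produce a continuous map $h:\tilde{L}_{X}^{+}(x)\to\tilde{L}_{Y}^{+}(y)$ satisfying $a)$, $b)$ and the sequential form of $c)$, and then upgrade the latter to the uniform limit using compactness. Suppose $\lim_{t\to+\infty}f(t)=0$ were to fail; then there would exist $\varepsilon>0$ and $s_{n}\to+\infty$ with $f(s_{n})\geq\varepsilon$ for all $n$. Because the points $\tilde{\pi}(x,s_{n})$ lie in the compact set $\overline{\tilde{\pi}^{+}(x)}$, a subsequence satisfies $\tilde{\pi}(x,s_{n_{k}})\to q$ for some $q\in\tilde{L}_{X}^{+}(x)$, so that $\{s_{n_{k}}\}_{k\in\N}\in\mathcal{L}_{x,q}^{+\infty}\subset\mathcal{L}_{x}^{+\infty}$. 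The sequential condition $c)$ then forces $f(s_{n_{k}})\to 0$, contradicting $f(s_{n_{k}})\geq\varepsilon$. Hence $\lim_{t\to+\infty}f(t)=0$, and the map $h$ obtained from \teoref{T2.3} satisfies conditions $a)$, $b)$ and $c)$ exactly as stated here.

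The converse requires no compactness at all: if $h$ satisfies the uniform limit condition $c)$, then for any $\{t_{n}\}_{n\in\N}\in\mathcal{L}_{x}^{+\infty}$ one has $t_{n}\to+\infty$, whence $f(t_{n})\to 0$, which is precisely the sequential condition $c)$ of \teoref{T2.3}. Thus $h$ fulfils all the hypotheses of the sufficiency part of \teoref{T2.3}, and we conclude that $y$ is comparable in limit with $x$. The only genuinely nontrivial step is the compactness-based passage from the sequential to the uniform limit in the forward direction; I expect the extraction of the convergent subsequence of $\tilde{\pi}(x,s_{n})$---the sole place where compactness of $\overline{\tilde{\pi}^{+}(x)}$ is used---to be the crux, since without it the sequential hypothesis cannot control $f$ along arbitrary divergent sequences. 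Everything else is a direct transcription of \teoref{T2.3}.
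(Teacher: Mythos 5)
Your proposal is correct and follows essentially the same route as the paper: the converse is the trivial observation that the uniform limit implies the sequential condition $c)$ of \teoref{T2.3}, and the forward direction upgrades the sequential condition to the uniform limit by contradiction, using compactness of $\overline{\tilde{\pi}^{+}(x)}$ to extract a subsequence lying in $\mathcal{L}_{x}^{+\infty}$ and then invoking condition $c)$ of \teoref{T2.3} to reach the contradiction. The only difference is cosmetic: you quantify the contradiction argument over a fixed $\tilde{q}$ (which is fine, since the claim is proved for each $\tilde{q}$ separately), while the paper negates the full statement at once.
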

\begin{proof} It is enough to prove condition $c)$ in the necessary condition. As showed in the proof of Theorem \ref{T2.3}, let $h:\tilde{L}_{X}^{+}(x)\to\tilde{L}_{Y}^{+}(y)$ be a continuous mapping satisfying conditions $a)$ and $b)$. Suppose to the contrary that
there are $\epsilon_{0}>0$, $q\in  \mathcal{P}_{x}
\setminus M_{X}$, $g_{q}^{x}$ a time reparametrization 
 and $\{t_{n}\}_{n\in\N}\subset \mathbb{R}_{+}$ a sequence with $t_{n}\stackrel{n\to+\infty}{\longrightarrow} +\infty$ such that
  \[ 
  \rho_{Y}(\tilde{\sigma}(y,t_{n}),\tilde{\sigma}(h(q),g_{q}^{x}(t_{n})))\geq \epsilon_0,
  \]
for all $n\in\mathbb{N}$. By compactness of $\overline{\tilde{\pi}^{+}(x)}$, there is a  subsequence $\{t_{n_{k}}\}_{k\in\N}$ of $\{t_{n}\}_{n\in\N}$ such that $\{t_{n_{k}}\}_{k\in\N}\in\mathcal{L}_{x}^{+\infty}\subset
\mathcal{L}_{y}^{+\infty}$ and according to the proof of \teoref{T2.3}, we conclude that
 \[\lim_{k\to+\infty}\rho_{Y}(\tilde{\sigma}(y,t_{n_{k}}),\tilde{\sigma}(h(q),g_{q}^{x}(t_{n_{k}})))=0\]
which is a contradiction.
\end{proof}

  \begin{theorem}\label{T2.14}\rm\cite[Theorem 3.14]{artigo2}\it  \, Let $(X,\pi;M,I)$ be an impulsive system and
$X$ be a complete metric space. If $x\in X$ is asymptotically
almost $\tilde{\pi}$-periodic,
 then:
\begin{enumerate}
\item[$a)$] $\overline{\tilde{\pi}^{+}(x)}$ is compact;
\item[$b)$] $\tilde{L}_X^{+}(x)$ coincides with the closure of an
almost $\tilde{\pi}$-periodic orbit.
\end{enumerate}
\end{theorem}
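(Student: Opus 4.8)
The plan is to reduce both assertions to the almost $\tilde{\pi}$-periodic point $p$ that asymptotically drives the motion of $x$. By hypothesis there are an almost $\tilde{\pi}$-periodic point $p\in X$ and a time reparametrization $g_{p}$ with
\[
\lim_{t\to+\infty}\rho_{X}(\tilde{\pi}(x,t),\tilde{\pi}(p,g_{p}(t)))=0.
\]
Since $g_{p}:\mathbb{R}_{+}\to\mathbb{R}_{+}$ is an increasing homeomorphism fixing $0$, it is surjective, so $g_{p}(t)\to+\infty$ as $t\to+\infty$; this lets me pass freely between the time scales $t$ and $g_{p}(t)$.

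First I would establish the auxiliary fact that $\overline{\tilde{\pi}^{+}(p)}$ is compact. Because $X$ is complete, it suffices to prove that $\tilde{\pi}^{+}(p)$ is totally bounded. Fix $\epsilon>0$ and let $T=T(\epsilon)$ be the constant from almost periodicity, so that every interval $[\alpha,\alpha+T]$ contains an almost period. The finite-time segment $\{\tilde{\pi}(p,s):0\le s\le T\}$ is precompact: under (H3) one has $T(p)=+\infty$, hence the impulse instants $t_{n}(p)$ cannot accumulate in finite time, so on $[0,T]$ there are only finitely many of them and the segment is a finite union of continuous arcs. Cover it by finitely many $\epsilon$-balls centred at $\tilde{\pi}(p,s_{1}),\dots,\tilde{\pi}(p,s_{k})$. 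For arbitrary $t\ge T$, choosing an almost period $\tau\in[t-T,t]$ gives $\rho_{X}(\tilde{\pi}(p,t),\tilde{\pi}(p,t-\tau))<\epsilon$ with $t-\tau\in[0,T]$, so $\tilde{\pi}(p,t)$ lies within $2\epsilon$ of one of the chosen centres. Hence $\tilde{\pi}^{+}(p)$ is totally bounded and $\overline{\tilde{\pi}^{+}(p)}$ is compact. For part $(a)$ I then transfer this to $x$: given $\epsilon>0$, pick $T_{0}$ with $\rho_{X}(\tilde{\pi}(x,t),\tilde{\pi}(p,g_{p}(t)))<\epsilon$ for $t\ge T_{0}$. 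The tail $\{\tilde{\pi}(x,t):t\ge T_{0}\}$ lies in the $\epsilon$-neighbourhood of the compact set $\overline{\tilde{\pi}^{+}(p)}$, hence is totally bounded, while the head $\{\tilde{\pi}(x,t):0\le t\le T_{0}\}$ is precompact by the same finite-arc argument. Therefore $\tilde{\pi}^{+}(x)$ is totally bounded and, by completeness, $\overline{\tilde{\pi}^{+}(x)}$ is compact.

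For $(b)$ I claim $\tilde{L}_{X}^{+}(x)=\tilde{L}_{X}^{+}(p)$. If $z=\lim_{n}\tilde{\pi}(x,\lambda_{n})$ with $\lambda_{n}\to+\infty$, then $\mu_{n}:=g_{p}(\lambda_{n})\to+\infty$ and $\rho_{X}(\tilde{\pi}(x,\lambda_{n}),\tilde{\pi}(p,\mu_{n}))\to0$, so $\tilde{\pi}(p,\mu_{n})\to z$ and $z\in\tilde{L}_{X}^{+}(p)$; conversely, any $z=\lim_{n}\tilde{\pi}(p,\mu_{n})$ with $\mu_{n}\to+\infty$ is attained along $\lambda_{n}:=g_{p}^{-1}(\mu_{n})\to+\infty$, giving $z\in\tilde{L}_{X}^{+}(x)$. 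Since $p$ is almost $\tilde{\pi}$-periodic, Theorem \ref{EG2} gives $\tilde{L}_{X}^{+}(p)=\overline{\tilde{\pi}^{+}(p)}$, the closure of the almost $\tilde{\pi}$-periodic orbit of $p$; if a base point outside $M$ is desired, one replaces $p$ by $\tilde{\pi}(p,s)$ with $0<s<\phi(p)$, which remains almost periodic by Lemma \ref{EM1} and has the same limit set, again by Theorem \ref{EG2}. This proves $(b)$.

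The main obstacle is the auxiliary compactness of $\overline{\tilde{\pi}^{+}(p)}$: this is exactly where almost periodicity is genuinely used, through the relative density of almost periods, and where the impulsive structure must be controlled, since the finite-time segments are only precompact once (H3) rules out accumulation of impulse instants in finite time. Once this is secured, the transfer to $x$ and the identification of $\tilde{L}_{X}^{+}(x)$ are straightforward consequences of the asymptotic relation and Theorem \ref{EG2}.
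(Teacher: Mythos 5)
The paper does not actually prove this statement: Theorem \ref{T2.14} is quoted from \cite[Theorem 3.14]{artigo2} and used as a black box, so there is no internal proof to compare yours against. Judged on its own, your argument is correct and is the standard one for results of this type: Bochner-type total boundedness of the almost periodic orbit (relative density of almost periods plus precompactness of finite-time segments, where (H3) is correctly invoked to exclude accumulation of impulse instants in finite time), transfer of total boundedness to $x$ through the asymptotic relation, and the identification $\tilde{L}_X^{+}(x)=\tilde{L}_X^{+}(p)=\overline{\tilde{\pi}^{+}(p)}$ via the reparametrization $g_p$ and Theorem \ref{EG2}. Your handling of the case $p\in M_X$ (replacing $p$ by $\tilde{\pi}(p,s)$ with $0<s<\phi(p)$, using Lemma \ref{EM1} and invariance of the limit set along the orbit) is also sound, though not strictly required by the statement.

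One sentence deserves repair. The claim ``the tail $\{\tilde{\pi}(x,t):t\ge T_{0}\}$ lies in the $\epsilon$-neighbourhood of the compact set $\overline{\tilde{\pi}^{+}(p)}$, hence is totally bounded'' is not a valid implication: the $\epsilon$-neighbourhood of a compact set need not be totally bounded (in an infinite-dimensional Banach space the $\epsilon$-ball around a single point already fails). What your setup actually yields, and what you should write, is the scale-by-scale version: for every $\delta>0$, put $\epsilon=\delta/3$ and take the corresponding $T_{0}$; a finite $\epsilon$-net of $\tilde{\pi}^{+}(p)$ then covers the tail by finitely many $\delta$-balls, while the head $\{\tilde{\pi}(x,t):0\le t\le T_{0}\}$ is precompact and is covered separately. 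Since $\delta$ is arbitrary and the dependence of $T_{0}$ on $\delta$ is harmless (the head is precompact for every $T_{0}$), $\tilde{\pi}^{+}(x)$ is totally bounded and completeness gives compactness of its closure. With this rephrasing, part $(a)$ is complete and everything else stands as written.
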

  
  If $x \in X$ satisfies some asymptotic property with $X$ complete and $y\in Y$ is comparable in limit with $x$ then $y$ also satisfies this asymptotic property. See the next result.
  
  \begin{theorem}\label{T2.4}
  Let $X$ be a complete metric space and $x\in X$ be a asymptotically almost $\tilde{\pi}$-periodic
(asymptotically $\tilde{\pi}$-stationary, asymptotically
$\tilde{\pi}$-periodic) point. If $y\in Y$ is comparable in limit with $x$, then $y$ is also  
asymptotically almost $\tilde{\sigma}$-periodic (asymptotically $\tilde{\sigma}$-stationary, asymptotically
$\tilde{\sigma}$-periodic).
 \end{theorem}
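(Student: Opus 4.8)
The plan is to combine the compactness of the limit set coming from \teoref{T2.14} with the intertwining map furnished by \corref{C3.12}, and then to transport the recurrence of a suitable point of $\mathcal{P}_x$ from $X$ to $Y$ through that map. Since every asymptotically $\tilde{\pi}$-stationary (resp. $\tilde{\pi}$-periodic) point is in particular asymptotically almost $\tilde{\pi}$-periodic, I would treat the almost periodic case in full and indicate the adjustments for the other two at the end. First, because $X$ is complete and $x$ is asymptotically almost $\tilde{\pi}$-periodic, \teoref{T2.14} gives that $\overline{\tilde{\pi}^{+}(x)}$ is compact and that $\tilde{L}_{X}^{+}(x)$ is the closure of an almost $\tilde{\pi}$-periodic orbit; in particular $\tilde{L}_{X}^{+}(x)$ is compact. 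As $y$ is comparable in limit with $x$, \corref{C3.12} then produces a continuous map $h\colon\tilde{L}_{X}^{+}(x)\to\tilde{L}_{Y}^{+}(y)$ satisfying conditions $(a)$, $(b)$ and $(c)$ stated there.

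Next I would fix the candidate almost periodic point of $Y$. By \lemaref{LemaPx111} the set $\mathcal{P}_{x}\setminus M_{X}$ is nonempty, so I choose $\tilde{q}\in\mathcal{P}_{x}\setminus M_{X}$. Since $\tilde{q}\in\tilde{L}_{X}^{+}(x)\setminus M_{X}$ and this set equals the closure of an almost $\tilde{\pi}$-periodic orbit with $M_{X}$ removed, \teoref{EG1} shows that $\tilde{q}$ is itself almost $\tilde{\pi}$-periodic. Moreover, because $\tilde{q}\in\tilde{L}_{X}^{+}(x)\setminus M_{X}$ and the latter set is positively $\tilde{\pi}$-invariant, the whole orbit stays in it, that is $\tilde{\pi}(\tilde{q},t)\in\tilde{L}_{X}^{+}(x)\setminus M_{X}$ for every $t\geq 0$. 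Hence condition $(b)$ applies along the entire orbit of $\tilde{q}$, giving
\[
\tilde{\sigma}(h(\tilde{q}),t)=h(\tilde{\pi}(\tilde{q},t)) \qquad \text{for all } t\geq 0.
\]

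The heart of the argument is to show that $h(\tilde{q})$ is almost $\tilde{\sigma}$-periodic. Here I would exploit that $h$ is continuous on the compact set $\tilde{L}_{X}^{+}(x)$, hence uniformly continuous. Given $\epsilon>0$, pick $\delta>0$ from uniform continuity of $h$; since $\tilde{q}$ is almost $\tilde{\pi}$-periodic there is $T>0$ such that every interval $[\alpha,\alpha+T]$ contains $\tau$ with $\rho_{X}(\tilde{\pi}(\tilde{q},t+\tau),\tilde{\pi}(\tilde{q},t))<\delta$ for all $t\geq 0$. As both arguments lie in $\tilde{L}_{X}^{+}(x)$, uniform continuity of $h$ together with the displayed identity yields $\rho_{Y}(\tilde{\sigma}(h(\tilde{q}),t+\tau),\tilde{\sigma}(h(\tilde{q}),t))<\epsilon$ for all $t\geq 0$, which is exactly almost $\tilde{\sigma}$-periodicity of $h(\tilde{q})$. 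Finally, condition $(c)$ of \corref{C3.12} states
\[
\lim_{t\to+\infty}\rho_{Y}(\tilde{\sigma}(y,t),\tilde{\sigma}(h(\tilde{q}),g_{\tilde{q}}^{x}(t)))=0
\]
for the time reparametrization $g_{\tilde{q}}^{x}$, so $y$ is asymptotically almost $\tilde{\sigma}$-periodic, with associated almost $\tilde{\sigma}$-periodic point $h(\tilde{q})$.

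For the two remaining cases one chooses $\tilde{q}$ to carry the stronger form of recurrence. If $x$ is asymptotically $\tilde{\pi}$-stationary, take as $\tilde{q}$ the stationary limit point $p$, which cannot lie in $M_{X}$ (by (H1) a stationary point immediately leaves $M_{X}$ under the flow, a contradiction unless it is off $M_{X}$); if $x$ is asymptotically $\tilde{\pi}$-periodic, take a $\tilde{\pi}$-periodic point in $\mathcal{P}_{x}\setminus M_{X}$, pushing it off $M_{X}$ by a short flow time exactly as in the proof of \lemaref{LemaPx111} if necessary. In both cases condition $(b)$ transfers the property verbatim: $\tilde{\sigma}(h(\tilde{q}),t)=h(\tilde{\pi}(\tilde{q},t))=h(\tilde{q})$ for a stationary $\tilde{q}$, and $\tilde{\sigma}(h(\tilde{q}),\tau)=h(\tilde{\pi}(\tilde{q},\tau))=h(\tilde{q})$ for a $\tilde{\pi}$-periodic $\tilde{q}$ of period $\tau$, so $h(\tilde{q})$ is stationary (resp. $\tilde{\sigma}$-periodic), and condition $(c)$ again closes the argument. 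I expect the main obstacle to be the almost periodic case, specifically the clean transfer of almost periodicity through $h$: this is precisely where one genuinely needs the compactness of $\tilde{L}_{X}^{+}(x)$ (to upgrade continuity of $h$ to uniform continuity) together with the fact that the orbit of $\tilde{q}$ remains inside $\tilde{L}_{X}^{+}(x)\setminus M_{X}$, so that the intertwining relation $(b)$ is available at every time $t$.
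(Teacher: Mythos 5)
Your proof is correct and takes essentially the same route as the paper's: \teoref{T2.14} for compactness, \corref{C3.12} for the intertwining map $h$, \lemaref{LemaPx111} and \teoref{EG1} to produce an almost $\tilde{\pi}$-periodic point $\tilde{q}\in\mathcal{P}_{x}\setminus M_{X}$, uniform continuity of $h$ on the compact limit set combined with condition $b)$ to transfer almost periodicity to $h(\tilde{q})$, and condition $c)$ to conclude. The only differences are cosmetic: you make explicit the compactness-to-uniform-continuity upgrade (which the paper asserts directly when invoking \corref{C3.12}) and you spell out the stationary and periodic cases that the paper leaves parenthetical.
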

 \begin{proof} Suppose that $x\in X$ is 
  asymptotically almots $\tilde{\pi}$-periodic and $y\in Y$ is comparable in limit with $x$. Theorem \ref{T2.14}  states 
 the compactness of $\overline{\tilde{\pi}^{+}(x)}$ and implies that $\tilde{L}_{X}^{+}(x)=\overline{\tilde{\pi}^{+}(p)}$ for some point $p$ almost $\tilde{\pi}$-periodic.
 
By \corref{C3.12}, there is a uniformly continuous mapping $h:\tilde{L}_{X}^{+}(x)\to\tilde{L}_{Y}^{+}(y)$ satisfying the conditions $a)$, $b)$ and $c)$ from Corollary \ref{C3.12}. 
 The uniform continuity of $h$ implies that for $\epsilon>0$ given, there is $\delta>0$ such that  
\begin{equation}\label{eq3.14} \rho_{Y}(h(z_1),h(z_2))<\epsilon\quad \text{whenever} \quad \rho_{X}(z_1,z_2)<\delta.\end{equation}

 By \lemaref{LemaPx111} we have $\mathcal{P}_{x}\setminus M_{X}\neq\emptyset$. Let $q\in  \mathcal{P}_{x}\setminus M_{X}$, then $q\in \tilde{L}_{X}^{+}(x)\setminus M_{X}=\overline{\tilde{\pi}^{+}(p)}\setminus M_{X}$. According  to Theorem \ref{EG1}
 the point $q$ is almost $\tilde{\pi}$-periodic, that is, for a given $\delta>0$, there is $T=T(\delta)>0$ such that for any $\alpha\geq0$, the interval $[\alpha,\alpha+T]$ contains a number
$\tau_{\alpha}>0$ such that
\begin{equation}\label{eq3.15}
\rho_{X}(\tilde{\pi}(q,t+\tau_{\alpha}), \tilde{\pi}(q,t))<\delta \quad \mbox{\mbox{for all} }   t\geq 0.\end{equation}

By \eqref{eq3.14} and \eqref{eq3.15},

\[
\rho_{Y}(\tilde{\sigma}(h(q),t+\tau_{\alpha}),\tilde{\sigma}(h(q),t))=\rho_{Y}(h(\tilde{\pi}(q,t+\tau_{\alpha})),h(\tilde{\pi}(q,t)))<\epsilon
\]
for all $t\geq 0$. It means that $h(q)$ is almost $\tilde{\sigma}$-periodic. Using condition $c)$ from Corollary \ref{C3.12}, we conclude that $y$ is asymptotic almost $\tilde{\sigma}$-periodic.
\end{proof}

\begin{definition} A mapping $h:X\to Y$ is called an \emph{$I$-homomorphism} from the impulsive system $(X,\pi; M_{X}, I_{X})$ taking values in $(Y,\sigma; M_{Y}, I_{Y})$, if the following conditions hold:
\begin{itemize} 
\item[$a)$] $h$ is continuous in X;
\item[$b)$] $h(\tilde{\pi}(x,t))=\tilde{\sigma}(h(x),t)$ for all $x\in  X\setminus M_{X}$ and $t\geq 0$;
\item[$c)$] $h(\tilde{\pi}(I_{X}(x),t))=\tilde{\sigma}(h(I_{X}(x)),t)$ for all $x\in  X\cap M_{X}$ and $t\geq 0$.
\end{itemize}
\end{definition} 

Let $h:X\to Y$ be an $I$-homomorphism and $y\in Y$.
Consider the set
\begin{equation}\label{eq3.17}
X_{y}=\{x\in X: h(x)=y\}.
\end{equation}

In \obsref{R4.5}, we pointed out that $y$ is always comparable in limit with $x\in X_{y}\setminus M_{X}$.  In the next result we concern of with the converse, that is, 
when $x$ is comparable in limit with $y$. 

\begin{theorem}\label{T3.15} Let $h:X\to Y$  be an $I$-homomorphism, $y\in Y$ and $x\in X_{y}\setminus M_{X}$ be a point such that $\overline{\tilde{\pi}^{+}(x)}$ is compact.  Assume that for each point $q\in \tilde{L}_{Y}^{+}(y)$ there is a unique $p\in X$ such that $h(p)=q$.
Then $x$ is comparable in limit with $y$.
\end{theorem}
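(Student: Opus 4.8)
The plan is to unwind the definition of comparability in limit. Since the definition declares a point of $Y$ comparable in limit with a point of $X$ via an inclusion $\mathcal{L}_{\bullet}^{+\infty}\subset\mathcal{L}_{\bullet}^{+\infty}$ carrying the $X$-point on the left, the assertion that $x$ is comparable in limit with $y$ unwinds to the inclusion $\mathcal{L}_{y}^{+\infty}\subset\mathcal{L}_{x}^{+\infty}$. Thus I would fix an arbitrary sequence $\{t_{n}\}_{n\in\N}\in\mathcal{L}_{y}^{+\infty}$, so that $t_{n}\to+\infty$ and $\tilde{\sigma}(y,t_{n})\to p$ for a (unique) point $p\in Y$, and aim to show $\{t_{n}\}_{n\in\N}\in\mathcal{L}_{x}^{+\infty}$. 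Note that $p\in\tilde{L}_{Y}^{+}(y)$ because $t_{n}\to+\infty$. The basic identity I would use repeatedly is that, since $x\in X_{y}\setminus M_{X}$ and $h$ is an $I$-homomorphism, condition $b)$ gives $h(\tilde{\pi}(x,t))=\tilde{\sigma}(h(x),t)=\tilde{\sigma}(y,t)$ for every $t\geq 0$.

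Next I would exploit compactness. Because $\overline{\tilde{\pi}^{+}(x)}$ is compact and $\{\tilde{\pi}(x,t_{n})\}_{n\in\N}$ lies in it, there is a subsequence $\{t_{n_{k}}\}_{k\in\N}$ with $\tilde{\pi}(x,t_{n_{k}})\to q$ for some $q\in\overline{\tilde{\pi}^{+}(x)}$; since $t_{n_{k}}\to+\infty$, in fact $q\in\tilde{L}_{X}^{+}(x)$. Applying the continuity of $h$ together with the identity above yields
\[
h(q)=\lim_{k\to+\infty}h(\tilde{\pi}(x,t_{n_{k}}))=\lim_{k\to+\infty}\tilde{\sigma}(y,t_{n_{k}})=p,
\]
so $q$ is a preimage of $p$ under $h$ lying in $X$.

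Finally I would promote subsequential convergence to convergence of the whole sequence, and this is where the uniqueness hypothesis does the work. Since $p\in\tilde{L}_{Y}^{+}(y)$, the hypothesis guarantees that $p$ has exactly one preimage in $X$; hence every subsequential limit of $\{\tilde{\pi}(x,t_{n})\}_{n\in\N}$ must coincide with $q$. Indeed, if some subsequence converged to a point $q'$, the argument of the previous paragraph would give $h(q')=p$, forcing $q'=q$ by uniqueness. A sequence in a compact metric space all of whose convergent subsequences share the limit $q$ must itself converge to $q$; therefore $\tilde{\pi}(x,t_{n})\to q$, which means $\{t_{n}\}_{n\in\N}\in\mathcal{L}_{x,q}^{+\infty}\subset\mathcal{L}_{x}^{+\infty}$. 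As $\{t_{n}\}_{n\in\N}\in\mathcal{L}_{y}^{+\infty}$ was arbitrary, this establishes $\mathcal{L}_{y}^{+\infty}\subset\mathcal{L}_{x}^{+\infty}$, i.e. that $x$ is comparable in limit with $y$. The one genuinely delicate point is this last upgrade from subsequences to the full sequence: it is precisely the uniqueness of the preimage, combined with compactness, that prevents $\{\tilde{\pi}(x,t_{n})\}_{n\in\N}$ from oscillating between distinct fibers of $h$ over $p$, and without the uniqueness hypothesis the conclusion would fail.
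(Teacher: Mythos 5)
Your proof is correct and follows essentially the same route as the paper's: extract a convergent subsequence of $\{\tilde{\pi}(x,t_n)\}_{n\in\mathbb{N}}$ via compactness of $\overline{\tilde{\pi}^{+}(x)}$, push its limit through $h$ using continuity and condition $b)$ of the $I$-homomorphism (which applies since $x\notin M_X$ and $h(x)=y$), and then use the singleton-fiber hypothesis together with compactness to upgrade subsequential convergence to convergence of the full sequence. The only differences are cosmetic (your labels $p$ and $q$ are swapped relative to the paper's), and you even correct a small typo in the paper's final line, which writes $\{t_n\}_{n\in\mathbb{N}}\in\mathcal{L}_{y}^{+\infty}$ where $\mathcal{L}_{x}^{+\infty}$ is meant.
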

\begin{proof}
Let  $\{t_{n}\}_{n\in\N}\in \mathcal{L}_{y}^{+\infty}$. Then there is $q\in \tilde{L}_{Y}^{+}(y)$ such that
\begin{equation}\label{318.1}
\lim_{n\to+\infty}\tilde{\sigma}(y,t_{n})= q.
\end{equation}
Let  $\{\tilde{\pi}(x, t_{n_k})\}_{k\in\N}$ be an arbitrary subsequence of $\{\tilde{\pi}(x, t_n)\}_{n\in\N}$. Since $\overline{\tilde{\pi}^{+}(x)}$ is compact,
the sequence $\{\tilde{\pi}(x, t_{n_k})\}_{k\in\N}$ admits a convergent subsequence $\{\tilde{\pi}(x, t_{n_k}')\}_{k\in\N}$, that is,
\begin{equation}\label{318.2}
\lim_{k\to+\infty}\tilde{\pi}(x, t_{n_{k}}')= p \in \tilde{L}_{X}^{+}(x).
\end{equation} 
On the other hand, since $h$ is an \emph{$I$-homomorphism}, we have
\begin{equation}\label{318.3}
h(p) = \lim_{k\to+\infty}h(\tilde{\pi}(x, t_{n_{k}}'))= \lim_{k\to+\infty}\tilde{\sigma}(h(x), t_{n_{k}}') = \lim_{k\to+\infty}\tilde{\sigma}(y, t_{n_{k}}').
\end{equation} 
By \eqref{318.1} and \eqref{318.3}, we conclude that $h(p)=q$. Thus, every subsequence of $\{\tilde{\pi}(x, t_n)\}_{n\in\N}$ admits a further subsequence which converges to $p$ since $h^{-1}(\{q\})$ is a singleton. Hence, $\{\tilde{\pi}(x, t_n)\}_{n\in\N}$ is convergent and $\{t_{n}\}_{n\in\N}\in \mathcal{L}_{y}^{+\infty}$. This shows that $x$ is comparable in limit with $y$.
\end{proof}

\begin{corollary}
Let $Y$ be a complete metric space, $y\in Y$ and $x\in X_{y}\setminus M_{X}$ be a point such that $\overline{\tilde{\pi}^{+}(x)}$ is compact. Assume that $y$ is asymptotically almost $\tilde{\sigma}$-periodic
(asymptotically $\tilde{\sigma}$-stationary, asymptotically
$\tilde{\sigma}$-periodic) and for each point $q\in \tilde{L}_{Y}^{+}(y)$ there is a unique $p\in X$ such that $h(p)=q$.
Then  $x$ is asymptotically almost $\tilde{\pi}$-periodic
(asymptotically $\tilde{\pi}$-stationary, asymptotically
$\tilde{\pi}$-periodic).  

\end{corollary}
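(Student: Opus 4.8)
The plan is to combine Theorem \ref{T3.15} with Theorem \ref{T2.4} applied after interchanging the roles of the two systems. First I would observe that the standing data of the corollary provide exactly the hypotheses of Theorem \ref{T3.15}: the map $h:X\to Y$ is the $I$-homomorphism implicit in the notation $X_y$, we have $y\in Y$ and $x\in X_y\setminus M_X$ with $\overline{\tilde{\pi}^+(x)}$ compact, and for each $q\in\tilde{L}_Y^+(y)$ there is a unique $p\in X$ with $h(p)=q$. Hence Theorem \ref{T3.15} yields that $x$ is comparable in limit with $y$; in terms of the recurrence sets this is the inclusion $\mathcal{L}_y^{+\infty}\subset\mathcal{L}_x^{+\infty}$.

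Next I would invoke Theorem \ref{T2.4} with $(Y,\sigma;M_Y,I_Y)$ playing the role of the source system and $(X,\pi;M_X,I_X)$ the role of the target system. In this reversed form the theorem reads: if $Y$ is complete and $y$ is asymptotically almost $\tilde{\sigma}$-periodic, and if $x$ is comparable in limit with $y$, i.e. $\mathcal{L}_y^{+\infty}\subset\mathcal{L}_x^{+\infty}$, then $x$ is asymptotically almost $\tilde{\pi}$-periodic. All three hypotheses are now available: $Y$ is complete and $y$ is asymptotically almost $\tilde{\sigma}$-periodic by assumption, while the inclusion $\mathcal{L}_y^{+\infty}\subset\mathcal{L}_x^{+\infty}$ was produced in the previous step. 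Therefore $x$ is asymptotically almost $\tilde{\pi}$-periodic, which is the desired conclusion. The stationary and periodic cases follow verbatim, since Theorem \ref{T3.15} is insensitive to the type of recurrence and Theorem \ref{T2.4} handles the three asymptotic classes simultaneously.

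The only point requiring care, and the step I expect to be the main obstacle, is the justification that Theorem \ref{T2.4} may be applied after exchanging the two systems. Its statement is phrased for a fixed source $(X,\pi)$ and target $(Y,\sigma)$, but the proof uses only properties shared by every impulsive system obeying the standing hypotheses (H1)--(H3): Theorem \ref{T2.14} (which needs completeness of the source phase space), Corollary \ref{C3.12}, and the almost-periodicity results Theorems \ref{EG1} and \ref{EG2} together with Lemma \ref{LemaPx111}. Since both $(X,\pi;M_X,I_X)$ and $(Y,\sigma;M_Y,I_Y)$ satisfy (H1)--(H3), the whole argument is invariant under the transposition $X\leftrightarrow Y$, $\pi\leftrightarrow\sigma$, $M_X\leftrightarrow M_Y$, and the completeness requirement migrates precisely to $Y$, which is what the corollary supplies. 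Thus the symmetric application is legitimate and no new difficulty arises beyond bookkeeping of the reversed roles. I would also flag that the auxiliary continuous map furnished by Corollary \ref{C3.12} inside the reversed Theorem \ref{T2.4} runs from $\tilde{L}_Y^+(y)$ to $\tilde{L}_X^+(x)$ and is internal to that proof; it must not be confused with the $I$-homomorphism $h:X\to Y$ appearing in the corollary's hypotheses.
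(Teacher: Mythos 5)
Your proposal is correct and takes essentially the same route as the paper, whose entire proof reads ``This result follows by Theorem \ref{T2.4} and Theorem \ref{T3.15}'': Theorem \ref{T3.15} yields that $x$ is comparable in limit with $y$, and Theorem \ref{T2.4} applied with the roles of the two systems exchanged (completeness now required of $Y$) gives the conclusion. Your explicit justification that Theorem \ref{T2.4} may be transposed under $X\leftrightarrow Y$ merely spells out what the paper leaves implicit, so no genuinely different argument is involved.
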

\begin{proof}
This result follows by \teoref{T2.4} and  \teoref{T3.15}.
\end{proof}

 \end{document}